\documentclass[11pt,table]{article}

\usepackage{amsmath,amssymb,amsthm,amsfonts}
\usepackage{mathtools}
\usepackage[T1]{fontenc}
\usepackage[utf8]{inputenc}
\usepackage{caption}
\usepackage[english]{babel}
\usepackage[numbers,square]{natbib}
\usepackage{diagbox,slashbox}
\usepackage{array}
\usepackage{hhline}
\usepackage[dvipsnames,table]{xcolor}
\usepackage{tikz}
\usetikzlibrary{decorations.pathmorphing,arrows.meta,arrows,shapes.geometric,calc,patterns}
\usepackage[margin=.75in,bmargin=1.5in]{geometry}
\usepackage{soul}
\usepackage[colorlinks,urlcolor=blue,citecolor=,linkcolor=,hyperfootnotes=false]{hyperref}

\tikzset{
	meet/.style={regular polygon,regular polygon sides=3,fill,color=blue,scale=.5},
	joint/.style={regular polygon,regular polygon sides=3,shape border rotate=180,fill,color=red,scale=.5},
	both/.style={diamond,fill,color=purple,scale=.66},
	none/.style={circle,fill,scale=.5},
}

\newcommand{\PreserveBackslash}[1]{\let\temp=\\#1\let\\=\temp}
\newcolumntype{C}[1]{>{\PreserveBackslash\centering}p{#1}}
\newcolumntype{R}[1]{>{\PreserveBackslash\raggedleft}p{#1}}
\newcolumntype{L}[1]{>{\PreserveBackslash\raggedright}p{#1}}

\captionsetup[figure]{labelfont=sc,labelsep=period}

\makeatletter
\def\@biblabel#1{[\textbf{#1}]}
\newcommand{\xMapsto}[2][]{\ext@arrow 0599{\Mapstofill@}{#1}{#2}}
\def\Mapstofill@{\arrowfill@{\Mapstochar\Relbar}\Relbar\Rightarrow}
\makeatother

\newlength\pad
\setlength{\pad}{8pt}

\newcommand{\define}[1]{\emph{#1}}
\newcommand{\simp}[2]{#1:\,#2}
\newcommand{\K}{\mathbf{K}}
\newcommand{\M}{\mathbf{M}}

\newcommand{\C}[2]{\mathbf{Ch}(#1,#2)}
\newcommand{\CUP}[2]{\overset{#2}{\underset{\scriptstyle #1}{\bigcup}}}

\newcommand{\mth}[1]{\emph{#1}}
\newcommand{\nit}[1]{\emph{#1}}
\newcommand{\ident}{\mathsf{id}}
\newcommand{\eqy}{\!=\!}
\newtheorem{thm}{Theorem}
\newtheorem{lem}{Lemma}
\newtheorem{cor}{Corollary}
\newtheorem{prop}{Proposition}
\newtheorem{dfn}{Definition}
\newtheorem{qst}{Question}
\newtheorem*{clm*}{Claim}

\medmuskip=4mu plus 2mu minus 2mu 

\interfootnotelinepenalty=10000 

\begin{document}
\bibliographystyle{plainnatbib}
\setlength{\bibsep}{6pt}

\renewcommand{\abstractname}{\vspace{-\baselineskip}}

\title{Kuratowski Monoids on Posets}
\author{Mark Bowron}

\maketitle\vspace{-14pt}

\begin{abstract}\noindent
Ciraulo \cite{2025_ciraulo} recently showed that Kuratowski's closure--complement problem for arbitrary powersets of
topological spaces extends constructively to the interior--pseudocomplement problem for arbitrary posets, using the
closure--interior problem for posets \cite{1993_garel_olivier} (CIP) as a natural starting point.
After a brief overview of CIP, we resolve two diagram-completeness problems left open by Ciraulo.
Finally, we study operator semigroups arising from a little-known 1941 theorem of Chittenden
\cite{1941_chittenden}, which generalizes CIP.

\medskip\noindent
\emph{Keywords}: closure, interior, locale, operator monoid, poset, pseudocomplement, semigroup, supplement.

\medskip\noindent
\emph{2020 Mathematics Subject Classification}: 06F05 (primary); 06A15, 06D22, 54A05 (secondary).

\end{abstract}

\section{Introduction}\label{sec:intro}

Kuratowski's closure--complement problem has numerous relatives in the literature.
Many, including the original formulations \cite{1955_kelley,1922_kuratowski},
concern functions on powersets of topological spaces \cite{2008_gardner_jackson,2010_sherman}.
Others extend the domain to Boolean algebras \cite{1974_gaida_eremenko}, lattices \cite{2017_wang_wang_tao_peng},
and locales \cite{2025_ciraulo,2000_he_zhang}.
Here we focus primarily on posets \cite{1941_chittenden,2025_ciraulo,1993_garel_olivier}.

\begin{dfn}\label{dfn:orderings}
Let \mth{$(L,\le)$} be a poset\nit.
Let \mth{$\ident$} denote the identity function and \mth{$c,i$} be general closure and interior operators
on \mth{$(L,\le)$,}
subject only to the usual inequalities \mth{$i\le\ident\le c$,} without assuming any duality
involving complementation\nit.
We call the monoid \mth{$\K$}\! in diagram \mth{$1$} of
Figure~\nit{\ref{fig:monoids}} \define{the Kuratowski lattice} and its elements
\define{Kuratowski operators.}
For any given \mth{$(L,\le,c,i)$} the quotient of \mth{$\K$} under equality is called its \define{Kuratowski monoid.}
We also use this term generically for quotients of other operator monoids besides \mth{$\K$.}

We call any set \mth{$C\subseteq\{\{o_1,o_2\}\mathop|o_1,o_2\in\K\}$} a \define{collapse} of
\mth{$\K$.} An element \mth{$x\in L$} \define{satisfies} \mth{$C$} \define{on} \mth{$\K$}
if and only if \mth{$o_1x=o_2x\iff\{o_1,o_2\}\in C$} for all \mth{$o_1,o_2\in\K$.}
The poset \mth{$L$} itself \define{satisfies} \mth{$C$} \define{on} \mth{$\K$} pointwise\nit.
Satisfaction of a partial order \mth{$P$} on \mth{$\K$} by elements and posets is defined
similarly\nit. A collapse \mth{$C$} of \mth{$\K$} is called \define{local (resp.\ global)} if it is satisfied on \mth{$\K$}
by some element of some poset \nit(\!\!\;resp\nit.\,by some poset\,\nit{).}
Local and global orderings \mth{$P$} of \mth{$\K$} are defined similarly\nit.
These notions apply to any set of poset operators\nit, not only to \mth{$\K$.}
For brevity\nit, the term \define{global collapse} can also refer to its associated
set of poset operators.\footnote{These terms were introduced by Bowron~\cite{2024_bowron}
to unify several previously used ones. For the topological version of $\K$,
\emph{properties}~\cite{2007_mccluskey_mcintyre_watson} and \emph{Kuratowski lattices}~\cite{2024_staiger_wagner}
referred to local orderings,
\emph{universals}~\cite{2007_mccluskey_mcintyre_watson} to global orderings, and
\emph{equivalence patterns}~\cite{2008_gardner_jackson} to local collapses.
For the poset version of~$\K$, the standard term \emph{congruences}~\cite{1993_garel_olivier} was used for global collapses.}
\end{dfn}

Let $X$ be a topological space.
When we drop the axiom $\overline\varnothing=\varnothing$, equivalently
$A^{\circ}=X\setminus\overline{X\setminus A}$ for $A=X$, two new Kuratowski monoids can occur
\cite[Figure~$2.2$]{2008_gardner_jackson}.
When we generalize $\mathcal{P}(X)$ to any poset, the dual relationship between closure and
interior completely disappears,
occasioning $12$ new monoids \cite[Part~II]{1993_garel_olivier}.

\begin{thm}\label{thm:monoids}
Exactly \mth{$18$} distinct Kuratowski monoids occur in posets \nit(Figure~\nit{\ref{fig:monoids}).}
\end{thm}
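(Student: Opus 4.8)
The plan is to reduce the count to a classification of the realizable quotients of the abstract Kuratowski lattice $\K$, so the first step is to pin down $\K$. Because $c$ and $i$ are idempotent and monotone with $i\le\ident\le c$, every reduced word in $c,i$ is an alternating string, and a priori the monoid could be infinite. I would first prove the stabilizing identities $cici\eqy ci$ and $icic\eqy ic$, valid in every poset. For any $x$ put $p\eqy ix$; then $ip\eqy p$, so $p\le icp\le cp$, and applying $c$ gives $cp\le cicp\le cp$, i.e.\ $cicp\eqy cp$; reading this with $p\eqy ix$ is precisely $cici\eqy ci$, and the dual computation on $c$-closed elements gives $icic\eqy ic$. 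These collapse every long word, leaving the seven operators $\ident,c,i,ci,ic,cic,ici$ with the universal order $i\le ici\le\{ci,ic\}\le cic\le c$ and $i\le\ident\le c$ --- the lattice $\K$ of diagram $1$ of Figure~\ref{fig:monoids}.

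With $\K$ fixed, Definition~\ref{dfn:orderings} makes a Kuratowski monoid a quotient of $\K$ by a \emph{global} collapse, so the theorem asks for the number of realizable quotients, which I would bound on both sides. The decisive structural input is that, in contrast with $\mathcal{P}(X)$, a poset carries no complementation linking $c$ to $i$. In $\mathcal{P}(X)$ the complement duality pairs each operator with its dual ($c\leftrightarrow i$, $ci\leftrightarrow ic$, $cic\leftrightarrow ici$, with $\ident$ self-dual) and forces every collapse to hold together with its dual image --- for instance $cic\eqy c$ cannot occur without $ici\eqy i$. Over arbitrary posets this pairing disappears, so the two collapses become logically independent, and it is exactly the resulting asymmetric quotients that account for the additional monoids of the poset setting.

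For the upper bound I would enumerate the congruences of $\K$ compatible with the universal order --- noting that a single identification $o_1\eqy o_2$ forces all of its order-consequences and pre/post-compositions to collapse as well --- obtaining a finite candidate lattice, and then delete those congruences that no poset can realize. For the lower bound I would supply an explicit $(L,\le,c,i)$ for each surviving congruence: the classical Kuratowski spaces witness the topological monoids, two further witnesses arise once $\overline\varnothing\eqy\varnothing$ is dropped and $c$ is allowed to move the bottom element, and the asymmetric order-theoretic monoids are witnessed by small finite posets built so that exactly one of a dual pair of collapses holds. Matching these realizations against the surviving congruences yields the total $18$.

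The main obstacle is completeness of the upper bound: proving that the realizable list is exhausted, i.e.\ that every global collapse outside it is impossible. This is a finite but intricate case analysis over the congruence lattice of the seven-element monoid, in which each excluded congruence must be defeated either by deriving a contradiction from the monoid relations together with the universal order, or by exhibiting a poset in which the two operators in question provably differ. The real work is to organize this so that the broken closure--interior duality is handled once and for all rather than identification by identification; here the CIP structure theory of Garel--Olivier \cite{1993_garel_olivier} carries much of the load.
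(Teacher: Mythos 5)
Your reduction of the word problem for $\K$ is correct and is the one worthwhile piece of actual mathematics in the proposal: the computation $p=ix$, $p\le icp\le cp$, hence $cp\le cicp\le cp$, does give $cici=ci$, and dually $icic=ic$, which collapses all alternating words to the seven operators of diagram~$1$ with the universal order you state. Your framing of the theorem as a count of realizable global collapses of this seven-element monoid is also the right one, and your accounting (six topological monoids, two more from closure spaces where $c$ may move $\bot$, ten more as five asymmetric dual pairs) is consistent with Figure~\ref{fig:monoids}. For calibration: the paper itself contains no proof of Theorem~\ref{thm:monoids} at all --- it is stated in the introduction as a known result and attributed, in both the text and the figure caption, to \cite[Part~II]{1993_garel_olivier} --- so there is no internal argument to measure you against.

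Judged as a standalone proof, however, the proposal stops exactly where the content begins. The number $18$ is never derived: you do not enumerate the order-compatible congruences of the seven-element monoid, you do not show that every congruence outside Figure~\ref{fig:monoids} fails globally (either by a contradiction from the relations or by a separating poset), and you do not exhibit the $18$ witness structures $(L,\le,c,i)$; each of these steps is described as a plan and then explicitly delegated to ``a finite but intricate case analysis'' and to the Garel--Olivier structure theory. That is a citation dressed as a proof, which happens to be the same move the paper makes, but it means the claim ``exactly $18$'' is not established by your argument. If you do intend to close it, one simplification you omit would help: Ciraulo's lemma, quoted in Section~\ref{sec:intro}, that every inequality in $\K$ is equivalent to an equation, so global orderings and global collapses coincide and only equational patterns need be enumerated and tested for realizability.
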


\begin{figure}
\renewcommand{\tabcolsep}{1pt}
\centering
\begin{tabular}{c}
\vrule width0pt height200pt\begin{tikzpicture}
[auto,
 block/.style={rectangle,draw=black,align=center,minimum width={40pt},
 minimum height={16pt},inner sep=3pt,scale=1},
 line/.style ={draw},
 scale=1]

	\node[block,anchor=south] (0) at (0,0) {\vphantom{$f$}$c\eqy i$};
	\node[block,anchor=south] (1) at (-4.5,1.5) {\vphantom{$f$}$c\eqy\ident$};
	\node[block,anchor=south] (2) at (4.5,1.5) {\vphantom{$f$}$\ident\eqy i$};
	\node[block,anchor=south] (3) at (-6,3) {\vphantom{$f$}$cic\eqy i$};
	\node[block,anchor=south] (4) at (6,3) {\vphantom{$f$}$c\eqy ici$};
	\node[block,anchor=south] (5) at (-6,4.75) {\vphantom{$f$}$ic\eqy i$};
	\node[block,anchor=south] (6) at (-4,4.75) {\vphantom{$f$}$ci\eqy i$};
	\node[block,anchor=south] (7) at (0,4.75) {\vphantom{$f$}$cic\eqy ici$};
	\node[block,anchor=south] (8) at (4,4.75) {\vphantom{$f$}$c\eqy ci$};
	\node[block,anchor=south] (9) at (6,4.75) {\vphantom{$f$}$c\eqy ic$};
	\node[block,anchor=south] (10) at (-5,6) {\vphantom{$f$}$ici\eqy i$};
	\node[block,anchor=south] (11) at (0,6) {\vphantom{$f$}$ic\eqy ici$\hspace{6pt}$ci\eqy ici$};
	\node[block,anchor=south] (13) at (5,6) {\vphantom{$f$}$c\eqy cic$};
	\node[block,anchor=south] (14) at (0,1.5) {\vphantom{$f$}$ic\eqy i,\,c\eqy ci$\hspace{6pt}$ci\eqy i,\,c\eqy ic$};
	\node[block,anchor=south] (16) at (0,3.5) {\vphantom{$f$}$ici\eqy i,\,c\eqy cic$};
	\node[block,anchor=south] (17) at (0,7.5) {$\K$};
	\node[anchor=south] (key) at (0,9.2) {\begin{minipage}[t]{284pt}\footnotesize 
In the upper diagram edges denote implication. The center column is self-dual with respect
to the partial order on $L$, so its nodes each occur on the powerset of some closure space; ED stands for
\textit{extremally disconnected} and OU
\textit{open unresolvable} \cite[Section~$2.1$]{2008_gardner_jackson}.\end{minipage}};

	\node[anchor=south,label={[label distance=-2pt]90:\scriptsize$1$}] at (17.90) {};
	\node[anchor=south,label={[label distance=-9.2pt]90:\tiny Kuratowski}] at (17.90) {};
	\node[anchor=south,label={[label distance=-9pt]90:\scriptsize$2$}] at (10.north) {};
	\node[anchor=south,label={[label distance=-9pt]90:\scriptsize$2$d}] at (13.north) {};
	\node[anchor=south,label={[label distance=-9.5pt]90:\scriptsize$3$}] at (11.130) {};
	\node[anchor=south,label={[label distance=-2pt]180:\tiny OU}] at (11.158) {};
	\node[anchor=south,label={[label distance=-9.5pt]90:\scriptsize$4$}] at (11.50) {};
	\node[anchor=south,label={[label distance=-2pt]0:\tiny ED}] at (11.22) {};
	\node[anchor=south,label={[label distance=-9pt]90:\scriptsize$5$}] at (5.140) {};
	\node[anchor=south,label={[label distance=-9pt]90:\scriptsize$5$d}] at (8.140) {};
	\node[anchor=south,label={[label distance=-9pt]90:\scriptsize$6$}] at (6.40) {};
	\node[anchor=south,label={[label distance=-9pt]90:\scriptsize$6$d}] at (9.40) {};
	\node[anchor=south,label={[label distance=-9pt]90:\raisebox{2pt}{\scriptsize$7$}}] at (7.north) {};
	\node[anchor=south,label={[label distance=-7pt]0:\tiny\raisebox{-18pt}{\begin{tabular}{l}ED and\\OU\end{tabular}}}] at (7.east) {};
	\node[anchor=south,label={[label distance=-9pt]90:\scriptsize$8$}] at (3.140) {};
	\node[anchor=south,label={[label distance=-9pt]90:\scriptsize$8$d}] at (4.40) {};
	\node[anchor=south,label={[label distance=-7pt]90:\scriptsize$9$}] at (16.north) {};
	\node[anchor=south,label={[label distance=-8pt]0:\tiny\begin{tabular}{l}non-top.\\w/duality\end{tabular}}] at (16.7) {};
	\node[anchor=south,label={[label distance=-9pt]90:\scriptsize$10$}] at (1.north) {};
	\node[anchor=south,label={[label distance=-9pt]90:\scriptsize$10$d}] at (2.north) {};
	\node[anchor=south,label={[label distance=-9pt]90:\scriptsize$11$}] at (14.130) {};
	\node[anchor=south,label={[label distance=-4pt]-90:\tiny\begin{tabular}{l}non-top.\\w/duality\end{tabular}}] at (14.191) {};
	\node[anchor=south,label={[label distance=-9pt]90:\scriptsize$12$}] at (14.50) {};
	\node[anchor=south,label={[label distance=-2pt]-90:\tiny partition}] at (14.350) {};
	\node[anchor=south,label={[label distance=-5pt]90:\scriptsize$13$}] at (0.north) {};
	\node[anchor=south,label={[label distance=-2pt]-90:\tiny discrete}] at (0.south) {};

	\draw[line] (0.north west) -- (1.south east);
	\draw[line] (0.north) -- (14.195);
	\draw[line] (0.north) -- (14.345);
	\draw[line] (0.north east) -- (2.south west);
	\draw[line] (1.north west) -- (3.south);
	\draw[line] (2.north east) -- (4.south);
	\draw[line] (3) -- (5);
	\draw[line] (3.north east) -- (6.south);
	\draw[line] (3.north east) -- (7.south);
	\draw[line] (4.north west) -- (7.south);
	\draw[line] (4.north west) -- (8.south);
	\draw[line] (4) -- (9);
	\draw[line] (5.north) -- (10.south);
	\draw[line] (5.north east) -- (11.west);
	\draw[line] (6.north) -- (10.south);
	\draw[line] (6.east) -- (11.337);
	\draw[line] (7.north) -- (11.203);
	\draw[line] (7.north) -- (11.337);
	\draw[line] (8.west) -- (11.203);
	\draw[line] (8.north) -- (13.south);
	\draw[line] (9.north west) -- (11.east);
	\draw[line] (9.north) -- (13.south);
	\draw[line] (10.east) -- (17.south west);
	\draw[line] (11.south) -- (11.north);
	\draw[line] (11.157) -- (17.south);
	\draw[line] (11.23) -- (17.south);
	\draw[line] (13.west) -- (17.south east);
	\draw[line] (14.15) -- (6.south);
	\draw[line] (14.15) -- (9.south);
	\draw[line] (14.15) -- (16.south);
	\draw[line] (14.south) -- (14.north);
	\draw[line] (14.165) -- (5.south);
	\draw[line] (14.165) -- (8.south);
	\draw[line] (14.165) -- (16.south);
	\draw[line] (16.north) -- (10.east);
	\draw[line] (16.north) -- (13.west);

\end{tikzpicture}\\
\end{tabular}

\bigskip\bigskip
\begin{tikzpicture}[scale=.4]
	\node (c) at (-1,3) [circle,fill,scale=.4,label={[label distance=-1pt]90:$c$}] {};
	\node (cic) at (-1,1) [circle,fill,scale=.4,label={[label distance=-4.5pt]135:$cic$}] {};
	\node (ic) at (-2,0) [circle,fill,scale=.4,label={[label distance=-1pt]180:$ic$}] {};
	\node (ci) at (0,0) [circle,fill,scale=.4,label={[label distance=-2pt]0:$ci$}] {};
	\node (ici) at (-1,-1) [circle,fill,scale=.4,label={[label distance=-4.5pt]-135:$ici$}] {};
	\node (i) at (-1,-3) [circle,fill,scale=.4,label={[label distance=-1pt]-90:$i$}] {};
	\node (id) at (2,0) [circle,fill,scale=.4,label={[label distance=-2pt]0:$\ident$}] {};
	\node (label) at (-1,-6) [label={[label distance=0pt]90:$1$}] {};
	\draw (i) -- (id) -- (c);
	\draw (i) -- (ici) -- (ci) -- (cic) -- (c);
	\draw (ici) -- (ic) -- (cic);
\end{tikzpicture}\hspace{\pad}
\begin{tikzpicture}[scale=.4]
	\node (c) at (-1,3) [circle,fill,scale=.4,label={[label distance=-1pt]90:$c$}] {};
	\node (cic) at (-1,1) [circle,fill,scale=.4,label={[label distance=-4.5pt]135:$cic$}] {};
	\node (ic) at (-2,0) [circle,fill,scale=.4,label={[label distance=-1pt]180:$ic$}] {};
	\node (ci) at (0,0) [circle,fill,scale=.4,label={[label distance=-2pt]0:$ci$}] {};
	\node (i) at (-1,-3) [circle,fill,scale=.4,label={[label distance=-1pt]-90:$ici=i$}] {};
	\node (id) at (2,0) [circle,fill,scale=.4,label={[label distance=-2pt]0:$\ident$}] {};
	\node (label) at (-1,-6) [label={[label distance=0pt]90:$2$}] {};
	\draw (i) -- (id) -- (c);
	\draw (i) -- (ci) -- (cic) -- (c);
	\draw (i) -- (ic) -- (cic);
\end{tikzpicture}\hspace{\pad}
\begin{tikzpicture}[scale=.4]
\node (c) at (-1,3) [circle,fill,scale=.4,label={[label distance=-1pt]90:$c$}] {};
\node (cic) at (-1,1) [circle,fill,scale=.4,label={[label distance=-1pt]180:$cic=ci$}] {};
\node (ci) at (-1,-1) [circle,fill,scale=.4,label={[label distance=-1pt]180:$ic=ici$}] {};
\node (i) at (-1,-3) [circle,fill,scale=.4,label={[label distance=-1pt]-90:$i$}] {};
\node (id) at (1,0) [circle,fill,scale=.4,label={[label distance=-2pt]0:$\ident$}] {};
\node (label) at (-1,-6) [label={[label distance=0pt]90:$3$}] {};
\draw (i) -- (id) -- (c);
\draw (i) -- (ci) -- (cic) -- (c);
\end{tikzpicture}\hspace{\pad}
\begin{tikzpicture}[scale=.4]
\node (c) at (-1,3) [circle,fill,scale=.4,label={[label distance=-1pt]90:$c$}] {};
\node (cic) at (-1,1) [circle,fill,scale=.4,label={[label distance=-1pt]180:$cic=ic$}] {};
\node (ci) at (-1,-1) [circle,fill,scale=.4,label={[label distance=-1pt]180:$ci=ici$}] {};
\node (i) at (-1,-3) [circle,fill,scale=.4,label={[label distance=-1pt]-90:$i$}] {};
\node (id) at (1,0) [circle,fill,scale=.4,label={[label distance=-2pt]0:$\ident$}] {};
\node (label) at (-1,-6) [label={[label distance=0pt]90:$4$}] {};
\draw (i) -- (id) -- (c);
\draw (i) -- (ci) -- (cic) -- (c);
\end{tikzpicture}\hspace{\pad}
\begin{tikzpicture}[scale=.4]
	\node (c) at (-1,2) [circle,fill,scale=.4,label={[label distance=-1pt]90:$c$}] {};
	\node (cic) at (-1,0) [circle,fill,scale=.4,label={[label distance=-1pt]180:$cic=ci$}] {};
	\node (i) at (-1,-2) [circle,fill,scale=.4,label={[label distance=-1pt]-90:$ic=ici=i$}] {};
	\node (id) at (1,0) [circle,fill,scale=.4,label={[label distance=-2pt]0:$\ident$}] {};
	\node (label) at (-1,-5) [label={[label distance=0pt]90:$5$}] {};
	\draw (i) -- (id) -- (c);
	\draw (i) -- (cic) -- (c);
\end{tikzpicture}

\vspace{-4pt}
\begin{tikzpicture}[scale=.4]
	\node (c) at (-1,2) [circle,fill,scale=.4,label={[label distance=-1pt]90:$c$}] {};
	\node (cic) at (-1,0) [circle,fill,scale=.4,label={[label distance=-1pt]180:$cic=ic$}] {};
	\node (i) at (-1,-2) [circle,fill,scale=.4,label={[label distance=-1pt]-90:$ci=ici=i$}] {};
	\node (id) at (1,0) [circle,fill,scale=.4,label={[label distance=-2pt]0:$\ident$}] {};
	\node (label) at (-1,-5) [label={[label distance=0pt]90:$6$}] {};
	\draw (i) -- (id) -- (c);
	\draw (i) -- (cic) -- (c);
\end{tikzpicture}\hspace{\pad}
\begin{tikzpicture}[scale=.4]
	\node (c) at (-1,2) [circle,fill,scale=.4,label={[label distance=-1pt]90:$c$}] {};
	\node (cic) at (-1,0) [circle,fill,scale=.4,label={[label distance=-1pt]180:$ic=ci$}] {};
	\node (i) at (-1,-2) [circle,fill,scale=.4,label={[label distance=-1pt]-90:$i$}] {};
	\node (id) at (1,0) [circle,fill,scale=.4,label={[label distance=-2pt]0:$\ident$}] {};
	\node (label) at (-1,-5) [label={[label distance=0pt]90:$7$}] {};
	\draw (i) -- (id) -- (c);
	\draw (i) -- (cic) -- (c);
\end{tikzpicture}\hspace{\pad}
\begin{tikzpicture}[scale=.4]
	\node (c) at (-1,1.5) [circle,fill,scale=.4,label={[label distance=-1pt]90:$c$}] {};
	\node (i) at (-1,-1.5) [circle,fill,scale=.4,label={[label distance=-1pt]-90:$cic=i$}] {};
	\node (id) at (-1,0) [circle,fill,scale=.4,label={[label distance=-2pt]180:$\ident$}] {};
	\node (label) at (-1,-5) [label={[label distance=0pt]90:$8$}] {};
	\draw (i) -- (id) -- (c);
\end{tikzpicture}\hspace{\pad}
\begin{tikzpicture}[scale=.4]
	\node (c) at (0,2) [circle,fill,scale=.4,label={[label distance=-1pt]90:$c=cic$}] {};
	\node (ic) at (-2,0) [circle,fill,scale=.4,label={[label distance=-1pt]180:$ic$}] {};
	\node (ci) at (2,0) [circle,fill,scale=.4,label={[label distance=-2pt]0:$ci$}] {};
	\node (i) at (0,-2) [circle,fill,scale=.4,label={[label distance=-1pt]-90:$ici=i$}] {};
	\node (id) at (0,0) [circle,fill,scale=.4,label={[label distance=-2pt]180:$\ident$}] {};
	\node (label) at (0,-5) [label={[label distance=0pt]90:$9$}] {};
	\draw (i) -- (id) -- (c);
	\draw (i) -- (ci) -- (c);
	\draw (i) -- (ic) -- (c);
\end{tikzpicture}\hspace{\pad}
\begin{tikzpicture}[scale=.4]
\node (c) at (-1,1.5) [circle,fill,scale=.4,label={[label distance=-1pt]90:$c=\ident$}] {};
\node (i) at (-1,-1.5) [circle,fill,scale=.4,label={[label distance=-1pt]-90:$cic=i$}] {};
\node (label) at (-1,-5) [label={[label distance=0pt]90:$10$}] {};
\draw (i)-- (c);
\end{tikzpicture}\hspace{\pad}
\begin{tikzpicture}[scale=.4]
	\node (c) at (-1,1.5) [circle,fill,scale=.4,label={[label distance=-1pt]90:$c=ci$}] {};
	\node (i) at (-1,-1.5) [circle,fill,scale=.4,label={[label distance=-1pt]-90:$ic=i$}] {};
	\node (id) at (-1,0) [circle,fill,scale=.4,label={[label distance=-2pt]180:$\ident$}] {};
	\node (label) at (-1,-5) [label={[label distance=0pt]90:$11$}] {};
	\draw (i) -- (id) -- (c);
\end{tikzpicture}\hspace{\pad}
\begin{tikzpicture}[scale=.4]
	\node (c) at (-1,1.5) [circle,fill,scale=.4,label={[label distance=-1pt]90:$c=ic$}] {};
	\node (i) at (-1,-1.5) [circle,fill,scale=.4,label={[label distance=-1pt]-90:$ci=i$}] {};
	\node (id) at (-1,0) [circle,fill,scale=.4,label={[label distance=-2pt]180:$\ident$}] {};
	\node (label) at (-1,-5) [label={[label distance=0pt]90:$12$}] {};
	\draw (i) -- (id) -- (c);
\end{tikzpicture}\hspace{\pad}
\begin{tikzpicture}[scale=.4]
	\node (id) at (0,0) [circle,fill,scale=.4,label={[label distance=-2pt]-90:$c=i$}] {};
	\node (label) at (0,-5) [label={[label distance=0pt]90:$13$}] {};
\end{tikzpicture}
\caption{All $18$ Kuratowski monoids \cite[Part~II]{1993_garel_olivier} on $(L,\le)$, up to order duality.}
\label{fig:monoids}
\end{figure}

Ciraulo~\cite[Lemma 2.2]{2025_ciraulo} proved that every inequality in~$\K$ is equivalent to some equation
in~$\K$.\footnote{Surprisingly, a direct proof of this elementary fact had never appeared in print---even for topological
spaces---until Ciraulo \cite{2025_ciraulo} established it for~$\K$ in 2025.
With some effort, it can be extracted from Table~$11$ in Bowron \cite{2024_bowron} for the topological case.}
It follows that every global ordering of~$\K$ is a global collapse,\footnote{The open problem of determining all \emph{local}
orderings and collapses of~$\K$ lies beyond the scope of this paper.} i.e.,
one of the $18$ Kuratowski monoids in Figure~\ref{fig:monoids}.

In 1941, Chittenden \cite{1941_chittenden} generalized the dual formulas 
that underlie the closure-complement theorem, as follows.

\begin{thm}\label{thm:chittenden}
Let \mth{$s$} and \mth{$t$} be non-decreasing operators on a poset \mth{$(L,\le)$} such that
\mth{$s\le t$,} \mth{$s^3=s$,} and \mth{$t^3=t$.} Then\vspace{-4pt}
\[
(st)^2 = st\mbox{ and }(ts)^2 = ts.
\]
\end{thm}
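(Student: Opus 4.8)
The plan is to prove both identities by a sandwich argument, establishing $st \le (st)^2 \le st$ and, symmetrically, $ts \le (ts)^2 \le ts$. As a guide and sanity check, note the topological instance $s=i$, $t=c$ turns the claim into the classical Kuratowski identities $icic=ic$ and $cici=ci$; so I expect a short word-rewriting argument whose engine is the cubing hypotheses $s^3=s$ and $t^3=t$, which are exactly what let a length-four composite collapse to a length-two one.

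First I would record four building-block inequalities obtained by composing the hypothesis $s\le t$ on a single side. Composing on the \emph{right} is mere substitution of $sx$ or $tx$ for the argument and costs nothing: it yields $s^2\le ts$ and $st\le t^2$. Composing on the \emph{left} spends one monotonicity hypothesis: applying the non-decreasing $s$ (resp.\ $t$) to both sides of $s\le t$ yields $s^2\le st$ (resp.\ $ts\le t^2$). The one thing to keep honest throughout is precisely this asymmetry---right-composition is free, whereas left-composition invokes monotonicity of the outer operator.

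Next I would assemble $(st)^2=stst$. For the upper bound I feed $ts\le t^2$ into the middle of $stst=s(ts)t$: left-applying the monotone $s$ and then right-composing with $t$ gives $stst\le st^3=st$ by $t^3=t$. For the lower bound I rewrite $st=s^3t=s(s^2)t$ and feed $s^2\le ts$ into the same slot, obtaining $st=s^3t\le s(ts)t=stst$. The two bounds force $(st)^2=st$. The computation for $(ts)^2=ts$ is the mirror image: placing $st\le t^2$ inside $t(st)s$ gives $tsts\le t^3s=ts$, and placing $s^2\le st$ inside $t(s^2)s$ together with $ts=ts^3$ gives $ts\le tsts$.

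I do not anticipate a serious obstacle; the entire content lies in selecting, for each of the four bounds, the correct building-block inequality and the side on which to attach the flanking operators. The only delicate points are bookkeeping: verifying that monotonicity is invoked solely on left-compositions, and observing that each identity consumes exactly one cubing hypothesis per bound---one use of $t^3=t$ for each upper bound, one use of $s^3=s$ for each lower bound---while never needing the stronger idempotences $s^2=s$ or $t^2=t$, which indeed need not hold.
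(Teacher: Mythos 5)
Your proof is correct: the four building-block inequalities are derived with the right bookkeeping (right-composition free, left-composition spending monotonicity of the outer map), and each of the four sandwich bounds checks out, e.g.\ $st=s^3t\le s(ts)t=stst\le s(t^2)t=st^3=st$. The paper does not reprove Theorem~\ref{thm:chittenden} itself (it is quoted from Chittenden 1941), but your squeeze argument is exactly the technique the paper uses for the generalization in Lemma~\ref{lem:key}(ii), where $swt$ is sandwiched as $swt\le st^{\ell+1}t=stt=s^{2\ell+1}tt\le swt^{\ell+1}=swt$ by letterwise comparison; your argument is the $m=n=3$ specialization of that method.
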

\vspace{4pt}

\noindent
This raises the natural question below.
To answer it, in Section~\ref{sec:chittenden} we study the semigroups $s$ and $t$ generate
under the stated conditions. 
The paper concludes with a demonstration that Chittenden's case $(m=n=3)$
yields $46$ additional global collapses beyond the six in the topological closure-complement case.

\begin{qst}\label{qst:chittenden}
For \mth{$m,n\ge2$,} what is the least exponent \mth{$I(m,n)>1$} such that \mth{$(st)^{I(m,n)}=st$} for all
non-decreasing functions \mth{$s\le t$} such that \mth{$s^m=s$,} \mth{$t^n=t$} on an arbitrary poset \mth{$(L,\le)$?}
\end{qst}

Ciraulo \cite[\S\S$3$-$4$]{2025_ciraulo} left completeness of the interior--pseudocomplement and
localic closure--supplement diagrams open. In the next section we show that both of these Hasse diagrams are complete.

\section{The Interior--Pseudocomplement Problem}\label{sec:pseudo}

\subsection{The Partial Order on \texorpdfstring{$\M$}{M}}\label{subsec:hasse}

In a lattice $L$ with bottom element $\bot$, the element
$x^{*}:=\max\{y\in L\mathop|x\wedge y=\bot\}$, when it exists, is called the \define{pseudocomplement} of $x$.
When the \define{relative pseudocomplement} $x\rightarrow y:=\max\{z\in L\mathop|x\wedge z\le y\}$
exists for all $x,y\in L$, the lattice $L$ is bounded (i.e.,\ has both $\bot$ and $\top$),
and $x^{*}=x\rightarrow\bot$ holds for all $x\in L$.\footnote{Such an $L$ is called a \emph{Heyting algebra}.
In Section~\ref{subsec:localic} we focus on complete Heyting algebras---those with arbitrary meets and
joins---also known as locales.}
For fixed $a\in L$, the maps $f(x)=x\wedge a$ and $g(x)=a\rightarrow x$ form a Galois connection:
\[
f(x)\le y\iff x\le g(y).
\]
Setting $y=\bot$ gives $x\wedge a\le\bot\iff x\le a^{*}$. Since $a$ was arbitrary and $\wedge$ is commutative,
we have $a\le x^{*}\iff x\le a^{*}$ for all $a,x\in L$.  Because this property
involves only the order relation, it can be used to define a pseudocomplement operator ${-}:L\to L$ on an
arbitrary poset $(L,\le)$.

Ciraulo \cite{2025_ciraulo} proved that any such operator ${-}$ and any general interior $i$ on $(L,\le)$ together
generate at most $31$ distinct operators under composition.
Let $\M$ denote this monoid of (at most) $31$ operators.

\begin{prop}\label{prop:ip-monoid}
Figure~\nit{\ref{fig:ip-monoid}} gives the Hasse diagram of \mth{$\M$} where \mth{$b={-}i{-}$.}
\end{prop}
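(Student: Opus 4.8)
The plan is to prove the statement in the two standard directions: first verify every order relation drawn in Figure~\ref{fig:ip-monoid} (\textbf{soundness}), then show that no further relations hold by exhibiting separating examples (\textbf{completeness}), the latter simultaneously certifying that the $31$ operators are pairwise distinct. I would work only from the defining properties of the two generators: for the interior, $i\le\ident$, $i^2=i$, and isotonicity; for the pseudocomplement, antitonicity together with ${-}^3={-}$ and $\ident\le{-}{-}$. Two derived facts organize the argument. First, ${-}{-}$ is a closure operator, being isotone, inflationary, and idempotent (since ${-}^4={-}^2$). Second, $b={-}i{-}$ is also a closure operator: it is isotone; it is inflationary because $x\le{-}{-}x\le{-}i{-}x=bx$; and it is idempotent because $b\ge\ident$ forces $b\le b^2$, while applying ${-}$ to the inequality $i{-}x\le i{-}{-}i{-}x$ (itself obtained from $y\le{-}{-}y$ with $y=i{-}x$ and idempotency of $i$) forces $b^2\le b$. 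Since Ciraulo \cite{2025_ciraulo} already bounds $|\M|\le31$, only the order relation remains to be determined.

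For soundness I would first fix a normal form for the elements of $\M$ and record the word identities that cut the free monoid on $\{i,{-}\}$ down to these representatives; these follow from $i^2=i$, ${-}^3={-}$, and the order axioms, and can be presented as a terminating rewriting system or imported from Ciraulo's enumeration. Each edge $f\le g$ of Figure~\ref{fig:ip-monoid} then becomes a pointwise inequality, verified by functoriality of the order: prepending or appending an isotone operator preserves $\le$, while prepending ${-}$ reverses it, so every claimed inequality reduces to a short chain assembled from $i\le\ident$ and $\ident\le{-}{-}$. The only delicacy here is bookkeeping the sign flips introduced by the antitone generator ${-}$; I would also check that the displayed edges are exactly the covering relations, i.e.\ that the diagram is transitively reduced.

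For completeness I would refute every non-edge. The structural observation just recorded---that $i$ is an interior and $b$ a closure---shows that the submonoid $\langle i,b\rangle$ of $\M$ is a homomorphic image of the poset closure--interior monoid, so the separating posets of Garel--Olivier \cite{1993_garel_olivier} transport directly and discharge all incomparabilities among operators lying in this layer. The genuinely new separations concern operators that contain an odd number of occurrences of ${-}$. I would handle these by constructing a small family of finite posets with explicitly chosen interior $i$ and pseudocomplement ${-}$---most naturally finite topologies or Heyting algebras, where ${-}$ is a true pseudocomplement---then tabulating the values of all $31$ operators at each point and reading off the required strict incomparabilities. A well-chosen single poset may already realize most of the distinctions; a handful should suffice for the rest.

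The hard part will be this completeness step. The difficulty is not any individual inequality but the bookkeeping in aggregate: one must produce a family of examples that separates \emph{exactly} the pairs Figure~\ref{fig:ip-monoid} leaves unrelated, and then verify, across all $31$ operators and all chosen test points, both that no unordered pair accidentally becomes comparable and that all $31$ operators are distinct. I expect this to be most safely carried out as a finite, mechanically checkable search over small posets, with the $\langle i,b\rangle$-layer delegated to the CIP classification to reduce the combinatorial load.
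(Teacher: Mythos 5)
Your soundness half matches the paper's: the derived facts that ${-}{-}$ and $b={-}i{-}$ are closure operators, and the reduction of each drawn edge to a short chain built from $i\le\ident$ and $\ident\le{-}{-}$ via isotone/antitone multiplication, are exactly what the paper does. The divergence is in the completeness half, and there your reduction has a genuine gap. You propose to discharge all incomparabilities among words in $i$ and $b$ by ``transporting directly'' the separating posets of the closure--interior problem. But those posets come equipped with an \emph{arbitrary} closure operator $c$; to turn one into a counterexample valid in $\M$ you must additionally realize that $c$ as ${-}i{-}$ for an actual pseudocomplement ${-}$ on the same poset (or an extension of it), and nothing guarantees this is possible. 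The constraint is not vacuous: Proposition~\ref{prop:localic} shows that a single extra identity relating $i$ and ${-}$ (namely $i={-}{-}i$) collapses the whole upper component of Figure~\ref{fig:ip-monoid} down to the Kuratowski diagram, so the order structure of $\langle i,b\rangle$ inside $\M$ is genuinely sensitive to how $b$ arises from ${-}$. Separately, your case split --- ``the $\langle i,b\rangle$ layer'' plus ``operators with an odd number of ${-}$'' --- does not cover $\M$: the upper component contains ten operators such as $i{-}{-}$, ${-}{-}i$, $i{-}{-}i$, ${-}b{-}$, $ibi{-}{-}$, ${-}{-}ibi{-}{-}$, ${-}{-}ib$, $bi{-}{-}$ and ${-}{-}$ itself, all with evenly many occurrences of ${-}$ yet not expressible as words in $i$ and $b$ alone, so they are handled by neither of your two cases.

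Your fallback of an exhaustive mechanical search over small posets would in principle repair this, but note what the paper does instead to make the completeness step finite and human-checkable: it first shows the two components of Figure~\ref{fig:ip-monoid} are mutually incomparable (since ${-}\le\ident$ or $\ident\le{-}$ would force $\ident\le{-}$, which fails for any pseudocomplement), and then applies Bergman's critical-pair method within each component --- classify every node as join- or meet-irreducible or neither, so that every unprovable inequality $x\le y$ implies $x'\le y'$ for one of a small list of critical pairs; seven algebraic implications then cut the critical pairs down to six, each refuted by one explicit four-element counterexample (the posets with arrows drawn in Figure~\ref{fig:ip-monoid}). This reduction from all $O(31^2)$ non-edges to six targeted refutations is the substantive content your proposal is missing.
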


\begin{proof}
Since Ciraulo left it to the reader to verify the edges in Figure~\ref{fig:ip-monoid},
for completeness we do this now.

Edge $6$, $\ident\le{-}{-}$, follows directly from reflexivity of $\le$ and the definition of the pseudocomplement,
\[
x\le{-}({-}x)\ \Longleftrightarrow\ ({-}x)\le{-}x.
\]
Hence ${-}$ is antitone, since
$x\le y\ \Longrightarrow\ x\le y\le{-}({-}y)\ \Longleftrightarrow\ ({-}y)\le {-}x$.
Applying this to $\ident\le{-}{-}$ gives ${-}{-}{-}\le{-}$; right-multiplying $\ident\le{-}{-}$ by
${-}$ yields ${-}\le{-}{-}{-}$. Therefore ${-}{-}{-}={-}$, and
right-multiplying by ${-}$ shows that ${-}{-}$ is a closure operator on $L$.
By definition $b={-}i{-}$.
From $i\le\ident$ it follows that $i{-}\le{-}$, and hence ${-}{-}i{-}\le{-}$.
Left-multiplying by $i$ then applying antitonicity gives $bb=b$.
Since $\ident\le{-}{-}\le{-}i{-}=b$ and $b$ is monotone,
$b$ is a closure operator on $L$.

\begin{figure}
\renewcommand{\tabcolsep}{4pt}\hspace{60pt}
\begin{tikzpicture}[scale=2]
	\node[none] (i) at (0,0) [label={[label distance=-1pt]180:$i$}] {};
	\node[joint] (i--i) at (0,.5) [label={[label distance=-3pt]180:\raisebox{-16pt}{$i{-}{-}i$}}] {};
	\node[joint] (i--) at (-1,1) [label={[label distance=-1pt]180:$i{-}{-}$}] {};
	\node[joint] (ibi) at (0,1) [label={[label distance=-3pt]180:\raisebox{-16pt}{$ibi$}}] {};
	\node[joint] (--i) at (1,1) [label={[label distance=-3pt]0:\raisebox{14pt}{${-}{-}i$}}] {};
	\node[none] (ibi--) at (-1,1.5) [label={[label distance=-3pt]180:\raisebox{-16pt}{$ibi{-}{-}$}}] {};
	\node[none] (-b-) at (0,1.5) [label={[label distance=-4pt]180:\raisebox{8pt}{${-}b{-}$}}] {};
	\node[none] (--ibi) at (1,1.5) [label={[label distance=-3pt]0:\raisebox{-16pt}{${-}{-}ibi$}}] {};
	\node[both] (ib) at (-2,2) [label={[label distance=-1pt]180:$ib$}] {};
	\node[none] (--ibi--) at (0,2) [label={[label distance=-1pt]180:${-}{-}ibi{-}{-}$}] {};
	\node[both] (bi) at (2,2) [label={[label distance=-1pt]0:$bi$}] {};
	\node[meet] (--ib) at (-1,2.5) [label={[label distance=-3pt]180:\raisebox{8pt}{${-}{-}ib$}}] {};
	\node[meet] (bi--) at (1,2.5) [label={[label distance=-3pt]0:\raisebox{8pt}{$bi{-}{-}$}}] {};
	\node[meet] (bib) at (0,3) [label={[label distance=-3pt]180:\raisebox{8pt}{$bib$}}] {};
	\node[none] (b) at (0,4.5) [label={[label distance=-1pt]180:$b$}] {};
	\node[both] (id) at (3,1.5) [label={[label distance=-1pt]0:$\ident$}] {};
	\node[meet] (--) at (3,3) [label={[label distance=-1pt]0:${-}{-}$}] {};
	\node (1) at (0,4.1) [label={[label distance=-5pt]180:\scriptsize$1$}] {};
	\node (2) at (.65,4.1) [label={[label distance=-5pt]180:\scriptsize$2$}] {};
	\node (3) at (-.15,2.72) [label={[label distance=-5pt]180:\scriptsize$3$}] {};
	\node (4) at (.4,2.72) [label={[label distance=-5pt]180:\scriptsize$4$}] {};
	\node (5) at (2.25,2.72) [label={[label distance=-5pt]180:\scriptsize$5$}] {};
	\node (6) at (3,2.5) [label={[label distance=-5pt]180:\scriptsize$6$}] {};
	\node (7) at (-1.15,2.28) [label={[label distance=-5pt]180:\scriptsize$7$}] {};
	\node (8) at (-.7,2.29) [label={[label distance=-5pt]180:\scriptsize$8$}] {};
	\node (9) at (.9,2.31) [label={[label distance=-5pt]180:\scriptsize$9$}] {};
	\node (10) at (1.26,2.31) [label={[label distance=-5pt]180:\scriptsize$10$}] {};
	\node (11) at (-1.7,1.78) [label={[label distance=-5pt]180:\scriptsize$11$}] {};
	\node (12) at (-.55,1.78) [label={[label distance=-5pt]180:\scriptsize$12$}] {};
	\node (13) at (0.02,1.75) [label={[label distance=-5pt]180:\scriptsize$13$}] {};
	\node (14) at (0.45,1.95) [label={[label distance=-5pt]180:\scriptsize$14$}] {};
	\node (15) at (1.85,1.75) [label={[label distance=-5pt]180:\scriptsize$15$}] {};
	\node (16) at (-.97,1.26) [label={[label distance=-5pt]180:\scriptsize$16$}] {};
	\node (17) at (-.6,1.46) [label={[label distance=-5pt]180:\scriptsize$17$}] {};
	\node (18) at (-.68,1.2) [label={[label distance=-5pt]180:\scriptsize$18$}] {};
	\node (19) at (.92,1.2) [label={[label distance=-5pt]180:\scriptsize$19$}] {};
	\node (20) at (.48,1.08) [label={[label distance=-5pt]180:\scriptsize$20$}] {};
	\node (21) at (1.22,1.26) [label={[label distance=-5pt]180:\scriptsize$21$}] {};
	\node (22) at (2.74,1.2) [label={[label distance=-5pt]180:\scriptsize$22$}] {};
	\node (23) at (-.7,.8) [label={[label distance=-5pt]180:\scriptsize$23$}] {};
	\node (24) at (.22,.8) [label={[label distance=-5pt]180:\scriptsize$24$}] {};
	\node (25) at (.7,.9) [label={[label distance=-5pt]180:\scriptsize$25$}] {};
	\node (26) at (.22,.3) [label={[label distance=-5pt]180:\scriptsize$26$}] {};
	\node[meet] (keym) at (-3,4) [label={[label distance=1pt]0:\raisebox{2pt}{meet-irreducible}}] {};
	\node[both] (keyjm) at (-3,3.75) [label={[label distance=1pt]0:\raisebox{2pt}{join- and meet-irreducible}}] {};
	\node[joint] (keyj) at (-3,3.5) [label={[label distance=1pt]0:\raisebox{2pt}{join-irreducible}}] {};
	\draw (i) -- (i--i) -- (i--) -- (-b-);
	\draw (i--i) -- (ibi) -- (ibi--) -- (ib) -- (--ib) -- (bib) -- (b);
	\draw (i--i) -- (--i) -- (-b-) -- (--ibi--) -- (--ib);
	\draw (ibi) -- (--ibi) -- (--ibi--) -- (bi--) -- (bib);
	\draw (ibi--) -- (--ibi--);
	\draw (--ibi) -- (bi) -- (bi--);
	\draw (i--) -- (ibi--);
	\draw (--i) -- (--ibi);
	\draw (i) -- (id) -- (--) -- (b);
	\draw (-b-) -- (--);
	\draw[dotted] (ibi) -- (--);
	\draw[dashed,color=red] (i--i) -- (id);
	\draw[dotted] (ib) -- (bi--);
	\draw[dotted] (i--) -- (bi);
	\draw[dotted] (bi) -- (--ib);
	\draw[dotted] (--i) -- (ib);
	\path (id.north) edge[out=130,in=0,dotted] (bib.east);
\end{tikzpicture}\hspace{-78pt}
\raisebox{22pt}{\begin{tabular}{r|c|}\multicolumn{1}{c}{}&\multicolumn{1}{c}{\clap{$i{-}{-}i(1)>\ident(1)$}}\\\cline{2-2}
\raisebox{-4pt}{\begin{tikzpicture}[scale=.4]
\node (1) at (0,0) {\scriptsize$1$\!\!\!};
\node (2) at (0,2) {\scriptsize$2$\!\!\!};
\end{tikzpicture}}&\vrule width0pt height31pt
\begin{tikzpicture}
[myarrow/.style={-{Latex[sep=2pt]}},scale=.4]
\node (1) at (0,0) [circle,fill,scale=.4] {};
\node (01) at (2,0) [circle,fill,scale=.4] {};
\node (2) at (0,2) [circle,fill,scale=.4] {};
\node (02) at (2,2) [circle,fill,scale=.4] {};
\draw[myarrow] (1) -- (02) node[midway] {};
\end{tikzpicture}\\\hhline{~|-|}
\multicolumn{1}{c}{}
\end{tabular}}

\hspace{3.44pt}
\raisebox{200pt}{\begin{tabular}{r|c|}\multicolumn{1}{c}{}&\multicolumn{1}{c}{\clap{Fixpoint arrows are omitted.}}\\[6pt]
\multicolumn{1}{c}{}&\multicolumn{1}{c}{\clap{$\textcolor{cyan}{i{-}i(2)} > \textcolor{LimeGreen}{b{-}(2)}$}}\\\cline{2-2}
\raisebox{-4pt}{\begin{tikzpicture}[scale=.4]
\node (1) at (0,0) {\scriptsize$1$\!\!\!};
\node (2) at (0,2) {\scriptsize$2$\!\!\!};
\node (3) at (0,4) {\scriptsize$3$\!\!\!};
\end{tikzpicture}}&\vrule width0pt height54pt
\begin{tikzpicture}
[myarrow/.style={-{Latex[sep=2pt]}},scale=.4]
\node (1) at (0,0) [circle,fill,scale=.4] {};
\node (01) at (2,0) [circle,fill,scale=.4] {};
\node (001) at (4,0) [circle,fill,scale=.4] {};
\node (0001) at (6,0) [circle,fill,scale=.4] {};
\node (00001) at (8,0) [circle,fill,color=LimeGreen,scale=.4] {};
\node (000001) at (10,0) [circle,fill,scale=.4] {};
\node (2) at (0,2) [circle,fill,color=LimeGreen,scale=.4] {};
\node (02) at (2,2) [circle,fill,scale=.4] {};
\node (002) at (4,2) [circle,fill,color=cyan,scale=.4] {};
\node (0002) at (6,2) [circle,fill,scale=.4] {};
\node (00002) at (8,2) [circle,fill,scale=.4] {};
\node (000002) at (10,2) [circle,fill,scale=.4] {};
\node (3) at (0,4) [circle,fill,scale=.4] {};
\node (03) at (2,4) [circle,fill,scale=.4] {};
\node (003) at (4,4) [circle,fill,scale=.4] {};
\node (0003) at (6,4) [circle,fill,scale=.4] {};
\node (00003) at (8,4) [circle,fill,scale=.4] {};
\node (000003) at (10,4) [circle,fill,color=cyan,scale=.4] {};
\draw[myarrow] (1) -- (03) node[midway] {};
\draw[myarrow,color=LimeGreen] (2) -- (01) node[midway] {};
\draw[myarrow] (3) -- (01) node[midway] {};
\draw[myarrow,color=LimeGreen] (01) -- (003) node[midway] {};
\draw[myarrow] (02) -- (001) node[midway] {};
\draw[myarrow] (03) -- (001) node[midway] {};
\draw[myarrow,dashed,color=cyan] (002) -- (0001) node[midway] {};
\draw[myarrow,color=cyan] (0001) -- (00003) node[midway] {};
\draw[myarrow] (0002) -- (00001) node[midway] {};
\draw[myarrow,color=LimeGreen] (0003) -- (00001) node[midway] {};
\draw[myarrow,dashed] (00002) -- (000001) node[midway] {};
\end{tikzpicture}\\\hhline{~|-|}
\multicolumn{1}{c}{}\\
\end{tabular}}\hspace{-70pt}
\begin{tikzpicture}[scale=1.6]
\node[none] (i-) at (0,.75) [label={[label distance=-3pt]180:\raisebox{-16pt}{$i{-}$}}] {};
\node[joint] (-b) at (-1,1.5) [label={[label distance=-3pt]180:\raisebox{-16pt}{${-}b$}}] {};
\node[both] (-) at (-3,3) [label={[label distance=-1pt]180:${-}$}] {};
\node[none] (-bib) at (0,2.25) [label={[label distance=-3pt]0:\raisebox{4pt}{${-}bib$}}] {};
\node[joint] (ibi-) at (1,1.5) [label={[label distance=-3pt]0:\raisebox{-16pt}{$ibi{-}$}}] {};
\node[joint] (ib-) at (2,2.25) [label={[label distance=-3pt]0:\raisebox{-16pt}{$ib{-}$}}] {};
\node[both] (-ib) at (-1,3) [label={[label distance=-1pt]180:${-}ib$}] {};
\node[none] (-bi--) at (1,3) [label={[label distance=-1pt]180:${-}bi{-}{-}$}] {};
\node[both] (i-i) at (3,3) [label={[label distance=-1pt]0:$i{-}i$}] {};
\node[none] (-ibi--) at (0,3.75) [label={[label distance=-1pt]180:\raisebox{-11pt}{$bib{-}$}}] {};
\node[meet] (-bi) at (2,3.75) [label={[label distance=-3pt]0:\raisebox{8pt}{${-}bi$}}] {};
\node[meet] (b-) at (-1,4.5) [label={[label distance=-3pt]180:\raisebox{8pt}{$b{-}$}}] {};
\node[meet] (-ibi) at (1,4.5) [label={[label distance=-3pt]0:\raisebox{8pt}{${-}ibi$}}] {};
\node[none] (-i) at (0,5.25) [label={[label distance=-1pt]180:\raisebox{8pt}{$-i$}}] {};
\node (27) at (-.18,4.82) [label={[label distance=-5pt]180:\scriptsize$27$}] {};
\node (28) at (.5,4.82) [label={[label distance=-5pt]180:\scriptsize$28$}] {};
\node (29) at (-1.2,4.05) [label={[label distance=-5pt]180:\scriptsize$29$}] {};
\node (30) at (-.5,4.05) [label={[label distance=-5pt]180:\scriptsize$30$}] {};
\node (31) at (.8,4.05) [label={[label distance=-5pt]180:\scriptsize$31$}] {};
\node (32) at (1.5,4.05) [label={[label distance=-5pt]180:\scriptsize$32$}] {};
\node (33) at (-.5,3.43) [label={[label distance=-5pt]180:\scriptsize$33$}] {};
\node (34) at (.58,3.6) [label={[label distance=-5pt]180:\scriptsize$34$}] {};
\node (35) at (1.35,3.31) [label={[label distance=-5pt]180:\scriptsize$35$}] {};
\node (36) at (2.28,3.5) [label={[label distance=-5pt]180:\scriptsize$36$}] {};
\node (37) at (-1.2,1.94) [label={[label distance=-5pt]180:\scriptsize$37$}] {};
\node (38) at (-.28,2.4) [label={[label distance=-5pt]180:\scriptsize$38$}] {};
\node (39) at (.35,2.54) [label={[label distance=-5pt]180:\scriptsize$39$}] {};
\node (40) at (1.6,2.84) [label={[label distance=-5pt]180:\scriptsize$40$}] {};
\node (41) at (2.74,2.54) [label={[label distance=-5pt]180:\scriptsize$41$}] {};
\node (42) at (-.5,1.94) [label={[label distance=-5pt]180:\scriptsize$42$}] {};
\node (43) at (.8,1.94) [label={[label distance=-5pt]180:\scriptsize$43$}] {};
\node (44) at (1.6,2.02) [label={[label distance=-5pt]180:\scriptsize$44$}] {};
\node (45) at (-.18,1.18) [label={[label distance=-5pt]180:\scriptsize$45$}] {};
\node (46) at (.5,1.18) [label={[label distance=-5pt]180:\scriptsize$46$}] {};
\draw (i-) -- (-b) -- (-) -- (b-) -- (-i);
\draw (-b) -- (-bib) -- (-ib) -- (-ibi--) -- (b-);
\draw (i-) -- (ibi-) -- (-bib) -- (-bi--) -- (-ibi--) -- (-ibi) -- (-i);
\draw (ibi-) -- (ib-) -- (-bi--) -- (-bi) -- (-ibi);
\draw (ib-) -- (i-i) -- (-bi);
\draw[dotted] (-) -- (-ibi);
\draw[dashed,color=red] (-b) -- (i-i);
\draw[dashed,color=red] (-ib) -- (-bi);
\draw[dashed,color=red] (i-i) -- (b-);
\draw[dashed,color=red] (ibi-) -- (-);
\draw[dashed,color=red] (ib-) -- (-ib);
\end{tikzpicture}\hspace{-90pt}
\raisebox{190pt}{\begin{tabular}{r|c|}\multicolumn{1}{c}{}&\multicolumn{1}{c}{\clap{$\textcolor{cyan}{{-}ib(1)}>\textcolor{LimeGreen}{{-}bi(1)}$}}\\\cline{2-2}
\raisebox{-4pt}{\begin{tikzpicture}[scale=.4]
\node (1) at (0,0) {\scriptsize$1$\!\!\!};
\node (2) at (0,2) {\scriptsize$2$\!\!\!};
\node (3) at (0,4) {\scriptsize$3$\!\!\!};
\end{tikzpicture}}&\vrule width0pt height54pt
\begin{tikzpicture}
[myarrow/.style={-{Latex[sep=2pt]}},scale=.4]
\node (1) at (0,0) [circle,fill,color=cyan,scale=.4] {};
\node (01) at (2,0) [circle,fill,color=LimeGreen,scale=.4] {};
\node (001) at (4,0) [circle,fill,scale=.4] {};
\node (0001) at (6,0) [circle,fill,scale=.4] {};
\node (00001) at (8,0) [circle,fill,scale=.4] {};
\node (000001) at (10,0) [circle,fill,scale=.4] {};
\node (0000001) at (12,0) [circle,fill,scale=.4] {};
\node (2) at (0,2) [circle,fill,scale=.4] {};
\node (02) at (2,2) [circle,fill,scale=.4] {};
\node (002) at (4,2) [circle,fill,scale=.4] {};
\node (0002) at (6,2) [circle,fill,scale=.4] {};
\node (00002) at (8,2) [circle,fill,scale=.4] {};
\node (000002) at (10,2) [circle,fill,scale=.4] {};
\node (0000002) at (12,2) [circle,fill,color=LimeGreen,scale=.4] {};
\node (3) at (0,4) [circle,fill,scale=.4] {};
\node (03) at (2,4) [circle,fill,scale=.4] {};
\node (003) at (4,4) [circle,fill,scale=.4] {};
\node (0003) at (6,4) [circle,fill,scale=.4] {};
\node (00003) at (8,4) [circle,fill,scale=.4] {};
\node (000003) at (10,4) [circle,fill,color=cyan,scale=.4] {};
\node (0000003) at (12,4) [circle,fill,scale=.4] {};
\draw[myarrow,color=cyan] (1) -- (03) node[midway] {};
\draw[myarrow] (2) -- (03) node[midway] {};
\draw[myarrow] (3) -- (02) node[midway] {};
\draw[myarrow,dashed,color=cyan] (03) -- (002) node[midway] {};
\draw[myarrow,color=LimeGreen] (001) -- (0003) node[midway] {};
\draw[myarrow,color=cyan] (002) -- (0003) node[midway] {};
\draw[myarrow] (003) -- (0002) node[midway] {};
\draw[myarrow,dashed,color=teal] (0003) -- (00002) node[midway] {};
\draw[myarrow] (00001) -- (000003) node[midway] {};
\draw[myarrow,color=teal] (00002) -- (000003) node[midway] {};
\draw[myarrow] (00003) -- (000002) node[midway] {};
\draw[myarrow] (000001) -- (0000003) node[midway] {};
\draw[myarrow] (000002) -- (0000003) node[midway] {};
\draw[myarrow,color=LimeGreen] (000003) -- (0000002) node[midway] {};
\end{tikzpicture}\\\hhline{~|-|}
\multicolumn{1}{c}{}\\
\end{tabular}}\vspace{-146pt}

\hspace{20pt}
\raisebox{200pt}{\begin{tabular}{r|c|}\multicolumn{1}{c}{}&\multicolumn{1}{c}{\clap{$\textcolor{cyan}{ibi{-}(4)}>\textcolor{LimeGreen}{{-}(4)}$}}\\\cline{2-2}
\raisebox{-4pt}{\begin{tikzpicture}[scale=.4]
\node (1) at (0,0) {\scriptsize$1$\!\!\!};
\node (2) at (0,2) {\scriptsize$2$\!\!\!};
\node (3) at (0,4) {\scriptsize$3$\!\!\!};
\node (4) at (0,6) {\scriptsize$4$\!\!\!};
\end{tikzpicture}}&\vrule width0pt height76.5pt
\begin{tikzpicture}
[myarrow/.style={-{Latex[sep=2pt]}},scale=.4]
\node (1) at (0,0) [circle,fill,scale=.4] {};
\node (01) at (2,0) [circle,fill,scale=.4] {};
\node (001) at (4,0) [circle,fill,scale=.4] {};
\node (0001) at (6,0) [circle,fill,scale=.4] {};
\node (00001) at (8,0) [circle,fill,scale=.4] {};
\node (000001) at (10,0) [circle,fill,color=LimeGreen,scale=.4] {};
\node (0000001) at (12,0) [circle,fill,scale=.4] {};
\node (2) at (0,2) [circle,fill,scale=.4] {};
\node (02) at (2,2) [circle,fill,scale=.4] {};
\node (002) at (4,2) [circle,fill,scale=.4] {};
\node (0002) at (6,2) [circle,fill,scale=.4] {};
\node (00002) at (8,2) [circle,fill,scale=.4] {};
\node (000002) at (10,2) [circle,fill,scale=.4] {};
\node (0000002) at (12,2) [circle,fill,color=cyan,scale=.4] {};
\node (3) at (0,4) [circle,fill,scale=.4] {};
\node (03) at (2,4) [circle,fill,scale=.4] {};
\node (003) at (4,4) [circle,fill,scale=.4] {};
\node (0003) at (6,4) [circle,fill,scale=.4] {};
\node (00003) at (8,4) [circle,fill,scale=.4] {};
\node (000003) at (10,4) [circle,fill,scale=.4] {};
\node (0000003) at (12,4) [circle,fill,scale=.4] {};
\node (4) at (0,6) [circle,fill,color=cyan,scale=.4] {};
\node (04) at (2,6) [circle,fill,scale=.4] {};
\node (004) at (4,6) [circle,fill,scale=.4] {};
\node (0004) at (6,6) [circle,fill,scale=.4] {};
\node (00004) at (8,6) [circle,fill,color=LimeGreen,scale=.4] {};
\node (000004) at (10,6) [circle,fill,scale=.4] {};
\node (0000004) at (12,6) [circle,fill,scale=.4] {};
\draw[myarrow] (1) -- (04) node[midway] {};
\draw[myarrow] (2) -- (03) node[midway] {};
\draw[myarrow,color=cyan] (4) -- (01) node[midway] {};
\draw[myarrow,dashed] (03) -- (002) node[midway] {};
\draw[myarrow,dashed] (04) -- (002) node[midway] {};
\draw[myarrow,color=cyan] (001) -- (0004) node[midway] {};
\draw[myarrow] (002) -- (0003) node[midway] {};
\draw[myarrow] (004) -- (0001) node[midway] {};
\draw[myarrow,dashed] (0003) -- (00002) node[midway] {};
\draw[myarrow,dashed,color=cyan] (0004) -- (00002) node[midway] {};
\draw[myarrow] (00001) -- (000004) node[midway] {};
\draw[myarrow,color=cyan] (00002) -- (000003) node[midway] {};
\draw[myarrow,color=LimeGreen] (00004) -- (000001) node[midway] {};
\draw[myarrow,dashed,color=cyan] (000003) -- (0000002) node[midway] {};
\draw[myarrow,dashed] (000004) -- (0000002) node[midway] {};
\end{tikzpicture}\\\hhline{~|-|}
\multicolumn{1}{c}{}\\
\end{tabular}}\hspace{80pt}
\raisebox{200pt}{\begin{tabular}{r|c|}\multicolumn{1}{c}{}&\multicolumn{1}{c}{\clap{$\textcolor{cyan}{ib{-}(2)}>\textcolor{LimeGreen}{{-}ib(2)}$}}\\\cline{2-2}
\raisebox{-4pt}{\begin{tikzpicture}[scale=.4]
\node (1) at (0,0) {\scriptsize$1$\!\!\!};
\node (2) at (0,2) {\scriptsize$2$\!\!\!};
\node (3) at (0,4) {\scriptsize$3$\!\!\!};
\node (4) at (0,6) {\scriptsize$4$\!\!\!};
\end{tikzpicture}}&\vrule width0pt height76.5pt
\begin{tikzpicture}
[myarrow/.style={-{Latex[sep=2pt]}},scale=.4]
\node (1) at (0,0) [circle,fill,scale=.4] {};
\node (01) at (2,0) [circle,fill,scale=.4] {};
\node (001) at (4,0) [circle,fill,scale=.4] {};
\node (0001) at (6,0) [circle,fill,scale=.4] {};
\node (00001) at (8,0) [circle,fill,scale=.4] {};
\node (000001) at (10,0) [circle,fill,scale=.4] {};
\node (0000001) at (12,0) [circle,fill,color=LimeGreen,scale=.4] {};
\node (2) at (0,2) [circle,fill,color=cyan,scale=.4] {};
\node (02) at (2,2) [circle,fill,color=LimeGreen,scale=.4] {};
\node (002) at (4,2) [circle,fill,scale=.4] {};
\node (0002) at (6,2) [circle,fill,scale=.4] {};
\node (00002) at (8,2) [circle,fill,scale=.4] {};
\node (000002) at (10,2) [circle,fill,scale=.4] {};
\node (0000002) at (12,2) [circle,fill,scale=.4] {};
\node (3) at (0,4) [circle,fill,scale=.4] {};
\node (03) at (2,4) [circle,fill,scale=.4] {};
\node (003) at (4,4) [circle,fill,scale=.4] {};
\node (0003) at (6,4) [circle,fill,scale=.4] {};
\node (00003) at (8,4) [circle,fill,scale=.4] {};
\node (000003) at (10,4) [circle,fill,scale=.4] {};
\node (0000003) at (12,4) [circle,fill,scale=.4] {};
\node (4) at (0,6) [circle,fill,scale=.4] {};
\node (04) at (2,6) [circle,fill,scale=.4] {};
\node (004) at (4,6) [circle,fill,scale=.4] {};
\node (0004) at (6,6) [circle,fill,scale=.4] {};
\node (00004) at (8,6) [circle,fill,scale=.4] {};
\node (000004) at (10,6) [circle,fill,color=cyan,scale=.4] {};
\node (0000004) at (12,6) [circle,fill,scale=.4] {};
\draw[myarrow] (1) -- (04) node[midway] {};
\draw[myarrow] (3) -- (01) node[midway] {};
\draw[myarrow] (4) -- (01) node[midway] {};
\draw[myarrow] (01) -- (004) node[midway] {};
\draw[myarrow] (03) -- (001) node[midway] {};
\draw[myarrow] (04) -- (001) node[midway] {};
\draw[myarrow,dashed,color=teal] (002) -- (0001) node[midway] {};
\draw[myarrow,color=teal] (0001) -- (00004) node[midway] {};
\draw[myarrow] (0003) -- (00001) node[midway] {};
\draw[myarrow] (0004) -- (00001) node[midway] {};
\draw[myarrow,dashed] (00002) -- (000001) node[midway] {};
\draw[myarrow] (000001) -- (0000004) node[midway] {};
\draw[myarrow] (000003) -- (0000001) node[midway] {};
\draw[myarrow,color=LimeGreen] (000004) -- (0000001) node[midway] {};
\end{tikzpicture}\\\hhline{~|-|}
\multicolumn{1}{c}{}\\
\end{tabular}}\hspace{-34pt}
\raisebox{326pt}{\begin{tabular}{r|c|}\multicolumn{1}{c}{}&\multicolumn{1}{c}{\clap{${-}b>i{-}i$}}\\\cline{2-2}
\raisebox{-4pt}{\begin{tikzpicture}[scale=.4]
\node (1) at (0,0) {\scriptsize$1$\!\!\!};
\node (2) at (0,2) {\scriptsize$2$\!\!\!};
\end{tikzpicture}}&\vrule width0pt height31pt
\begin{tikzpicture}
[myarrow/.style={-{Latex[sep=2pt]}},scale=.4]
\node (1) at (0,0) [circle,fill,scale=.4] {};
\node (01) at (2,0) [circle,fill,scale=.4] {};
\node (2) at (0,2) [circle,fill,scale=.4] {};
\node (02) at (2,2) [circle,fill,scale=.4] {};
\draw[myarrow,dashed] (2) -- (01) node[midway] {};
\draw[myarrow] (1) -- (02) node[midway] {};
\end{tikzpicture}\\\hhline{~|-|}
\multicolumn{1}{c}{}
\end{tabular}}\vspace{-146pt}
\caption{The Hasse diagram of $\M$ \cite[\S$3$]{2025_ciraulo}. Dashed lines represent critical pairs.}
\label{fig:ip-monoid}
\end{figure}

Edges $1$, $22$, $28$, $32$, $37$, $38$, $42$ follow from \cite[Proposition~$2.1$]{2025_ciraulo} together with antitonicity.
Letting \simp{$n$}{$xRy$} mean edge $n$ is implied by $xRy$, we have:
\simp{$16$}{$(i\le ibi){-}{-}$}, \simp{$29$}{$(\ident\le b){-}$}, \simp{$30$}{${-}i[(\ident\le b)i{-}{-}]$},
\simp{$44$}{$(ibi\le ib){-}$}, \simp{$46$}{$(i\le ibi){-}$}.
Note that \simp{$34$}{$[{-}(22)]bi{-}{-}$} where numbers represent previously obtained edges.
The rest now follow in numerical order:
\simp{$2$}{${-}(37)$}, 
\simp{$3$}{$(2)ib$},
\simp{$4$}{$bi(2)$},
\simp{$5$}{$(37){-}$},
\simp{$7$}{$(6)ib$},
\simp{$8$}{${-}{-}i(4)$},
\simp{$9$}{${-}(34)$},
\simp{$10$}{$bi(6)$},
\simp{$11$}{$i(4)$},
\simp{$12$}{$(6)ibi{-}{-}$},
\simp{$13$}{${-}(30)$},
\simp{$14$}{${-}{-}ibi(6)$},
\simp{$15$}{${-}(32)$},
\simp{$17$}{$ibi(6)$},
\simp{$18$}{$(6)i{-}{-}$},
\simp{$19$}{${-}{-}i(6)$},
\simp{$20$}{$(6)ibi$},
\simp{$21$}{${-}(28)$},
\simp{$23$}{$i{-}{-}(22)$},
\simp{$24$}{$[i{-}(22)]{-}i$},
\simp{$25$}{$(22){-}{-}i$},
\simp{$26$}{$i[(6)i]$},
\simp{$27$}{${-}i(6)$},
\simp{$31$}{${-}ibi(6)$},
\simp{$33$}{${-}(8)$},
\simp{$35$}{${-}bi(6)$},
\simp{$36$}{$(6)i{-}i$},
\simp{$39$}{${-}(4)$},
\simp{$40$}{$(6)i{-}i{-}{-}$},
\simp{$41$}{$i{-}i(6)$},
\simp{$43$}{$(6)i{-}i{-}i{-}$},
\simp{$45$}{$(6)i{-}$}.

Suppose $i=\ident$. Since ${-}{-}{-}={-}$, this collapses
Figure~\ref{fig:ip-monoid} to just $3$ nodes and $1$ edge:

\vspace{-26pt}\hspace{400pt}
\begin{tikzpicture}[scale=.5]
	\node (--) at (0,2) [circle,fill,scale=.5,label={[label distance=-1pt]180:${-}{-}$}] {};
	\node (id) at (0,0) [circle,fill,scale=.5,label={[label distance=-1pt]180:$\ident$}] {};
	\node (-) at (2,1) [circle,fill,scale=.5,label={[label distance=-1pt]180:${-}$}] {};
	\draw (id) -- (--);
\end{tikzpicture}

\vspace{-4pt}\noindent
Since
${-}\le\ident\ \Longrightarrow\ {-}\le{-}{-}\ \Longleftrightarrow\ {-}{-}\le{-}\ \Longrightarrow\ \ident\le{-}$
and $\ident\not\le{-}$ holds for any pseudocomplement ${-}$, we conclude that for any $o_1,o_2$
from separate components in Figure~\ref{fig:ip-monoid}, $o_1\not\le o_2$ holds in general.

To rule out all further inequalities, we apply Bergman's \cite{1994_bergman} method to each component separately.

A node is \textit{join-irreducible} (resp.\ \textit{meet-irreducible}) if and only if it is not the join (resp.\ meet)
of a set of other nodes.
Bergman observed that nodes with exactly one descending edge are join-irreducible (the lower node of this edge prevents
the upper one from being a least upper bound)
and nodes with more than one descending edge---at least one of which is the only ascending
edge of its lower node---are not join-irreducible (the upper node is the join
of every pair consisting of the aforementioned lower node and one of the others).
The duals of these observations apply to meet-irreducibility.

Bergman's first step is to find all join- and meet-irreducible nodes.
We begin with the upper component.
The join $i$ of the empty set is not join-irreducible;
dually, the meet $b$ of the empty set is not meet-irreducible.
Nodes with exactly one descending edge are:
$ib$, $bi$, $\ident$, $i{-}{-}$, $ibi$, ${-}{-}i$, $i{-}{-}i$.
Nodes with multiple descending edges, at least one of which is the only ascending
edge of its lower node, are:
$b$, $bib$, ${-}{-}$, ${-}{-}ib$, $bi{-}{-}$.
Each of the remaining four nodes
${-}{-}ibi{-}{-}$, $ibi{-}{-}$, ${-}b{-}$, ${-}{-}ibi$
is the join of the set of lower nodes of its descending edges,
hence not join-irreducible.
Nodes with exactly one ascending edge are:
$bib$, ${-}{-}$, ${-}{-}ib$, $bi{-}{-}$, $ib$, $bi$, $\ident$.
Nodes with multiple ascending edges, at least one of which is the only descending
edge of its upper node, are:
$ibi{-}{-}$, ${-}{-}ibi$, $i{-}{-}i$, $i$.
Each of the remaining five nodes
${-}{-}ibi{-}{-}$, ${-}b{-}$, $i{-}{-}$, $ibi$, ${-}{-}i$
is the meet of the set of upper nodes of its ascending edges, hence not meet-irreducible.

The search for all join- and meet-irreducible nodes in the lower component is similar.

Let $J$ and $M$ be the sets of join- and meet-irreducible nodes, respectively, in Figure~\ref{fig:ip-monoid}.
By Lemma~$5$ in Bergman \cite{1994_bergman}, $(x,y)$
is a \textit{critical pair} if and only if $x\in\left(J\setminus{\downarrow}\{y\}\right)_{min}$ and
$y\in\left(M\setminus{\uparrow}\{x\}\right)_{max}$ where $S_{min}$ is the set
$\{x\in S\mathop|x<x'\,\Longrightarrow\,x'\not\in S\}$ of minimal nodes in $S$ and $S_{max}$ is defined similarly.

We observe in Figure~\ref{fig:ip-monoid} that for all $(x,y)\in J\times M$ such that $x\not\le y$,
minimality of $x$ or maximality of $y$ is violated if and only if a dotted or dashed line does not connect $x$ and $y$.
The dotted and dashed lines therefore represent all critical pairs.

Since every inequality $x\le y$ not implied by Figure~\ref{fig:ip-monoid} implies
$x'\le y'$ for some critical pair $(x',y')$, to prove completeness it suffices to
disprove every inequality represented by a critical pair.
The following implications eliminate the dotted lines, leaving only the inequalities represented by
the six dashed lines:

\bigskip\noindent
{
\renewcommand{\arraystretch}{1.2}
\renewcommand{\tabcolsep}{2pt}
\centering
\begin{tabular}{r@{\hspace{-.4pt}}clcr@{\hspace{-.4pt}}cl}
$(ibi{-}\le{-}bib\le{-})$&:&${-}(\ident\le bib)$&\kern20pt&$(i{-}i\le{-}bi\le b{-})$&:&${-}(i{-}{-}\le bi)$\\
$(ibi\le{-}{-}ibi\le{-}{-})$&:&${-}({-}\le{-}ibi)$&&$({-}ib\le{-}bi)$&:&${-}(bi\le{-}{-}ib)$\\
$(ibi{-}\le{-})$&:&$(ibi\le{-}{-}){-}$&&$({-}b\le ib{-}\le i{-}i)$&:&$({-}{-}i\le ib){-}$\\
$(ib{-}\le{-}bi{-}{-}\le{-}ib)$&:&${-}(ib\le bi{-}{-})$&&&&\\
\end{tabular}\vrule height0pt depth0pt width30pt 

}

\bigskip\noindent
The counterexamples in Figure~\ref{fig:ip-monoid} disprove all six inequalities that remain, completing the proof.
\end{proof}

We verified by computer that the $2\cdot\binom{6}{2}$ possible implications between the six remaining inequalities
are each disproved by some poset on $4$ points, thus none of our counterexamples are redundant.

\begin{figure}[!t]
\centering
\begin{tikzpicture}[scale=1]
\node[none] (i) at (3,-6) [label={[label distance=1pt]0:$i$}] {};
\node[joint] (i--) at (3,-5) [label={[label distance=-3pt]180:\raisebox{-15pt}{$i{-}{-}$}}] {};
\node[joint] (ibi) at (1.5,-5) [label={[label distance=-1pt]180:\raisebox{-15pt}{$ibi$}}] {};
\node[none] (ibi--) at (2.25,-4) [label={[label distance=-1pt]180:$ibi{-}{-}$}] {};
\node[both] (ib) at (2.25,-3) [label={[label distance=-3pt]0:\raisebox{-15pt}{$ib$}}] {};
\node[both] (bi) at (0,-4) [label={[label distance=-1pt]180:$bi$}] {};
\node[meet] (bi--) at (0,-2) [label={[label distance=-1pt]180:$bi{-}{-}$}] {};
\node[meet] (bib) at (0,-1) [label={[label distance=-1pt]180:$bib$}] {};
\node[none] (b) at (0,0) [label={[label distance=-1pt]180:$b$}] {};
\node[both] (id) at (4.5,-4) [label={[label distance=-1pt]0:$\ident$}] {};
\node[meet] (--) at (4.5,-3) [label={[label distance=-1pt]0:${-}{-}$}] {};
\draw (b) -- (bib) -- (bi--) -- (bi) -- (ibi) -- (i);
\draw (bib) -- (ib) -- (ibi--) -- (ibi);
\draw (bi--) -- (ibi--) -- (i--);
\draw (b) -- (--) -- (id) -- (i);
\draw (--) -- (i--) -- (i);
\draw[dotted] (--) -- (ibi);
\draw[dashed,color=red] (bi) -- (i--);
\draw[dashed,color=red] (ib) -- (bi);
\draw[dashed,color=red] (bib) -- (id);
\draw[dotted] (id) -- (i--);
\draw[dashed,color=red] (bi--) -- (ib);
\end{tikzpicture}\hspace{-50pt}
\begin{tikzpicture}[scale=1]
\node (label) at (-1.6,-1) [label={[label distance=2pt]180:$(\textsf{S}(L),\supseteq)$}] {};
\node[none] (-b) at (1.5,-6) [label={[label distance=2pt]180:${-}b$}] {};
\node[both] (-) at (0,-3) [label={[label distance=-1pt]180:${-}$}] {};
\node[joint] (-bib) at (3,-5) [label={[label distance=-1pt]0:${-}bib$}] {};
\node[joint] (ib-) at (3,-4) [label={[label distance=-1pt]0:$ib{-}$}] {};
\node[both] (-ib) at (1.5,-3) [label={[label distance=-1pt]180:${-}ib$}] {};
\node[both] (-bi) at (3,-2) [label={[label distance=-1pt]0:${-}bi$}] {};
\node[none] (bib-) at (1.5,-2) [label={[label distance=-1pt]0:$bib{-}$}] {};
\node[meet] (b-) at (0,-1) [label={[label distance=-1pt]180:$b{-}$}] {};
\node[meet] (-ibi) at (1.5,-1) [label={[label distance=-1pt]0:${-}ibi$}] {};
\node[none] (-i) at (0,0) [label={[label distance=-1pt]180:${-}i$}] {};
\draw (-i) -- (b-) -- (-) -- (-b);
\draw (-i) -- (-ibi) -- (bib-) -- (-ib) -- (-bib) -- (-b);
\draw (b-) -- (bib-) -- (ib-) -- (-bib);
\draw (-ibi) -- (-bi) -- (ib-);
\draw[dotted] (-) -- (-bib);
\draw[dotted] (-ibi) -- (-);
\draw[dotted] (b-) -- (-bi);
\draw[dotted] (-bi) -- (-ib);
\draw[dotted] (-ib) -- (ib-);
\end{tikzpicture}

\vspace{-10pt}\begin{tikzpicture}[scale=.98]
\node (c) at (3,6) [circle,fill,scale=.5,label={[label distance=1pt]0:$c$}] {};
\node (c--) at (3,5) [circle,fill,scale=.5,label={[label distance=-5pt]135:$c{-}{-}$}] {};
\node (cic) at (1.5,5) [circle,fill,scale=.5,label={[label distance=1pt]90:$cic$}] {};
\node (cic--) at (2.25,4) [circle,fill,scale=.5,label={[label distance=-1pt]180:$cic{-}{-}$}] {};
\node (ci) at (2.25,3) [circle,fill,scale=.5,label={[label distance=-1pt]0:\raisebox{10pt}{$ci$}}] {};
\node (ic) at (0,4) [circle,fill,scale=.5,label={[label distance=-1pt]180:$ic$}] {};
\node (ic--) at (0,2) [circle,fill,scale=.5,label={[label distance=-1pt]180:$ic{-}{-}$}] {};
\node (ici) at (0,1) [circle,fill,scale=.5,label={[label distance=-1pt]180:$ici$}] {};
\node (i) at (0,0) [circle,fill,scale=.5,label={[label distance=-1pt]180:$i$}] {};
\node (id) at (4.5,4) [circle,fill,scale=.5,label={[label distance=-1pt]0:$\ident$}] {};
\node (--) at (4.5,3) [circle,fill,scale=.5,label={[label distance=-1pt]0:${-}{-}$}] {};
\node[meet] (keym) at (-4,6.2) [label={[label distance=1pt]0:\raisebox{2pt}{meet-irreducible}}] {};
\node[both] (keyjm) at (-4,5.7) [label={[label distance=1pt]0:\raisebox{2pt}{join- and meet-irreducible}}] {};
\node[joint] (keyj) at (-4,5.2) [label={[label distance=1pt]0:\raisebox{2pt}{join-irreducible}}] {};
\draw (i) -- (ici) -- (ic--) -- (ic) -- (cic) -- (c);
\draw (ici) -- (ci) -- (cic--) -- (cic);
\draw (ic--) -- (cic--) -- (c--);
\draw (i) -- (--) -- (id) -- (c);
\draw (--) -- (c--) -- (c);
\end{tikzpicture}\hspace{30pt}
\begin{tikzpicture}[scale=.98]
\node (-i) at (1.5,6) [circle,fill,scale=.5,label={[label distance=2pt]180:${-}i$}] {};
\node (-) at (0,3) [circle,fill,scale=.5,label={[label distance=-1pt]180:${-}$}] {};
\node (-ici) at (3,5) [circle,fill,scale=.5,label={[label distance=-1pt]0:${-}ici$}] {};
\node (ci-) at (3,4) [circle,fill,scale=.5,label={[label distance=-1pt]0:$ci{-}$}] {};
\node (-ci) at (1.5,3) [circle,fill,scale=.5,label={[label distance=-1pt]180:${-}ci$}] {};
\node (-ic) at (3,2) [circle,fill,scale=.5,label={[label distance=-1pt]0:${-}ic$}] {};
\node (ici-) at (1.5,2) [circle,fill,scale=.5,label={[label distance=-1pt]0:$ici{-}$}] {};
\node (i-) at (0,1) [circle,fill,scale=.5,label={[label distance=-1pt]180:$i{-}$}] {};
\node (-cic) at (1.5,1) [circle,fill,scale=.5,label={[label distance=-5pt]-45:${-}cic$}] {};
\node (-c) at (0,0) [circle,fill,scale=.5,label={[label distance=-1pt]180:$-c$}] {};
\draw (-c) -- (i-) -- (-) -- (-i);
\draw (-c) -- (-cic) -- (ici-) -- (-ci) -- (-ici) -- (-i);
\draw (i-) -- (ici-) -- (ci-) -- (-ici);
\draw (-cic) -- (-ic) -- (ci-);
\end{tikzpicture}
\caption{The closure--supplement monoid \cite[\S$4$]{2025_ciraulo} on $(\textsf{S}(L),\subseteq)$.
Dashed lines represent critical pairs.}
\label{fig:localic}
\end{figure}

\subsection{The Pointfree Variant of Kuratowski's Closure--Complement Problem}\label{subsec:localic}

He and Zhang \cite{2000_he_zhang} showed that the usual closure~$c$, interior~$i$, and co-pseudocomplement~${-}$
(also called the \emph{supplement}) on the co-frame $(\textsf{S}(L),\subseteq)$ of sublocales
of a locale~$L$ generate at most $21$ distinct operators. Ciraulo \cite[\S$4$]{2025_ciraulo}
used Figure~\ref{fig:ip-monoid} to obtain the lower diagram in Figure~\ref{fig:localic} for this operator monoid.
Since the next proposition holds for all posets, the peculiarly localic content of Ciraulo's result
lies in Proposition~\ref{prop:locales}.

\begin{prop}\label{prop:localic}
The equation \mth{$i={-}{-}i$} collapses Figure~\nit{\ref{fig:ip-monoid}} to the upper diagram
in Figure~\nit{\ref{fig:localic}.}
\end{prop}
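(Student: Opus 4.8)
The plan is to impose the single relation ${-}{-}i=i$ on the $31$ operators of Figure~\ref{fig:ip-monoid} and show that it forces exactly ten identifications, after which the quotient order is read off by contracting the corresponding edges. From the proof of Proposition~\ref{prop:ip-monoid} I would carry over that $\ident\le{-}{-}$, that ${-}{-}$ and $b={-}i{-}$ are closure operators, and that the $31$ operators split into two mutually incomparable components by the parity of the number of occurrences of ${-}$: the order-preserving ``even'' component drawn on the left of Figure~\ref{fig:ip-monoid} and the order-reversing ``odd'' component on the right (no operator of one is ever $\le$ an operator of the other, since $\ident\not\le{-}$). The identifications preserve parity, so this split persists and I would treat the two components separately. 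The one extra fact I would extract at the outset is the consequence ${-}b={-}{-}i{-}=i{-}$, which together with ${-}{-}i=i$ drives everything.

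In the even component, ${-}{-}i=i$ collapses the chain $i\le i{-}{-}i\le{-}{-}i$ to a single node, since $i{-}{-}i=i(\,{-}{-}i\,)=i^2=i$, and it absorbs the leading ${-}{-}$ in ${-}{-}ib=ib$, ${-}{-}ibi=ibi$, and ${-}{-}ibi{-}{-}=ibi{-}{-}$; finally ${-}b{-}={-}{-}i{-}{-}=i{-}{-}$. This removes the six operators ${-}{-}i$, $i{-}{-}i$, ${-}{-}ib$, ${-}{-}ibi$, ${-}{-}ibi{-}{-}$, ${-}b{-}$, leaving $11$. In the odd component, ${-}b=i{-}$ yields $i{-}={-}b$, $i{-}i={-}bi$, $ibi{-}={-}bib$, and ${-}bi{-}{-}=ib{-}$, removing the four operators $i{-}$, $i{-}i$, $ibi{-}$, ${-}bi{-}{-}$ and leaving $10$. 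The surviving $11+10=21$ operators are precisely the node labels of the upper diagram of Figure~\ref{fig:localic}. Because each identification merges two \emph{comparable} nodes of the (now complete) diagram of Figure~\ref{fig:ip-monoid}, the quotient order is obtained simply by contracting those edges, and I would then read off the covering relations of the upper diagram of Figure~\ref{fig:localic}.

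It remains to prove sharpness: that the $21$ survivors are pairwise distinct and that no inequality beyond those drawn is forced. Here I would re-run Bergman's \cite{1994_bergman} method on each contracted component to locate its critical pairs. This cannot be inherited from Figure~\ref{fig:ip-monoid}, because contraction creates new join- and meet-irreducibles and therefore relocates the critical pairs: the odd component loses all of its critical pairs (each of its potential strict inequalities is eliminated by an implication among the survivors, the dotted lines of the upper-right diagram), while the even component, which had only one in Figure~\ref{fig:ip-monoid}, now retains four, shown as the dashed lines of the upper-left diagram and involving the pairs $\{i{-}{-},bi\}$, $\{ib,bi\}$, $\{bib,\ident\}$, and $\{ib,bi{-}{-}\}$.

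The main obstacle will be disproving the four inequalities these dashed lines represent by explicit posets that \emph{also} satisfy the imposed equation $i={-}{-}i$. The counterexamples of Proposition~\ref{prop:ip-monoid} cannot be reused, since any poset that distinguishes all $31$ operators necessarily violates $i={-}{-}i$; fresh small posets on which ${-}{-}i=i$ holds while the relevant operators separate are required. The cleanest way to certify that all four inequalities genuinely fail---and hence that the $21$-operator diagram is complete---is to invoke the realization on sublocales: the co-frame $(\textsf{S}(L),\supseteq)$ satisfies $i={-}{-}i$, and He and Zhang \cite{2000_he_zhang} exhibit a locale on which all $21$ operators are distinct, so the upper diagram of Figure~\ref{fig:localic} occurs as a global collapse and is therefore exactly the quotient of Figure~\ref{fig:ip-monoid} by $i={-}{-}i$.
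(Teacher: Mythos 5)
Your derivation of the ten identifications is substantively identical to the paper's proof, which consists of exactly this: left-multiplying $i={-}{-}i$ by $i$ and right-multiplying it (and the derived equations $i{-}={-}b$ and $i{-}i={-}bi$) by suitable words to obtain $i=i{-}{-}i$, $i{-}{-}={-}b{-}$, $ib={-}{-}ib$, $ibi={-}{-}ibi$, $ibi{-}{-}={-}{-}ibi{-}{-}$, $ibi{-}={-}bib$, and $ib{-}={-}bi{-}{-}$. Your route through the single derived fact ${-}b={-}{-}i{-}=i{-}$ is the same computation in different packaging, and your counts ($17-6=11$ even, $14-4=10$ odd, total $21$) are correct. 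That is all the paper proves under this proposition; the sharpness of the resulting diagram is deliberately deferred to Theorem~\ref{thm:localic}, where Bergman's method is re-run on the upper diagram of Figure~\ref{fig:localic} and the four surviving critical inequalities are refuted by the explicit sublocale computations $(1)$--$(4)$.

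The completeness argument you append, however, has a genuine gap at its final step. He and Zhang proved only that the three operators generate \emph{at most} $21$ distinct compositions; they did not exhibit a locale realizing all $21$, and even if they had, pairwise distinctness of the $21$ operators would not rule out additional forced inequalities between them, which is what completeness of a Hasse diagram requires. Indeed the paper presents exactly this completeness as a question Ciraulo left open, and its resolution needs the four counterexamples $(1)$--$(4)$ constructed on $(\omega+1)^{op}$ and on sublocales of $\mathbb{R}$ (which, being sublocale co-frames, do satisfy $i={-}{-}i$). A smaller instance of the same error appears earlier: you propose to ``carry over'' the mutual incomparability of the two components from Proposition~\ref{prop:ip-monoid}, but negative facts do not transfer to a subclass of posets---the paper's witness there sets $i=\ident$, which is compatible with $i={-}{-}i$ only when ${-}{-}=\ident$, so the incomparability must be re-certified with witnesses satisfying the imposed equation (a Boolean algebra with $i=\ident$ works). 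Your identification of the four dashed critical pairs in the even component and the disappearance of critical pairs in the odd component is consistent with the paper, but the certificates you offer for them do not close the argument.
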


\noindent
\emph{Proof}.
This holds by left multiplication $(i=i{-}{-}i)$: $i(i={-}{-}i)$ and by the eight right multiplications:

\medskip\noindent
{
\renewcommand{\arraystretch}{1.2}
\renewcommand{\tabcolsep}{2pt}
\centering
\begin{tabular}{r@{\hspace{-.4pt}}clcr@{\hspace{-.4pt}}cl}
$(i{-}={-}b)$&:&$(i={-}{-}i){-}$&\kern20pt&$(ibi{-}{-}={-}{-}ibi{-}{-})$&:&$(i={-}{-}i)bi{-}{-}$\\
$(i{-}{-}={-}b{-})$&:&$(i={-}{-}i){-}{-}$&&$(i{-}i={-}bi)$&:&$(i{-}={-}b)i$\\
$(ib={-}{-}ib)$&:&$(i={-}{-}i)b$&&$(ibi{-}={-}bib)$&:&$(i{-}i={-}bi)b$\\
$(ibi={-}{-}ibi)$&:&$(i={-}{-}i)bi$&&$(ib{-}={-}bi{-}{-})$&:&$(i{-}i={-}bi){-}{-}$\\
\end{tabular}\vrule height0pt depth0pt width30pt

\vspace{-\baselineskip}\qed
}

\begin{prop}\label{prop:locales}
The closure operator \mth{$b:={-}i{-}$} on the frame \mth{$(\textsf{S}(L),\supseteq)$} sends each sublocale
\mth{$S\in\textsf{S}(L)$} to the largest open sublocale of \mth{$S$} in the co-frame \mth{$(\textsf{S}(L),\subseteq)$.}
\end{prop}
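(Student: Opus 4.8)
The plan is to show that $b={-}i{-}$ lands in open sublocales and returns the largest open sublocale below its argument, which is exactly the interior of the co-frame $(\textsf{S}(L),\subseteq)$. First I would pin down the dictionary behind Figure~\ref{fig:localic}: on the frame $(\textsf{S}(L),\supseteq)$ the operator called $i$ in $\M$ is the closure of the co-frame $(\textsf{S}(L),\subseteq)$ — closure is inflationary in $\subseteq$, hence deflationary in $\supseteq$, so it is a legitimate interior there — sending each $S$ to the smallest closed sublocale $c(S)$ containing it, while ${-}$ is the supplement. Thus $b={-}i{-}={-}c{-}$. Proposition~\ref{prop:ip-monoid} then does most of the work for free: it proves that $b$ is a closure operator on the ambient poset, here $(\textsf{S}(L),\supseteq)$, which in the order $\supseteq$ reads $b(S)\subseteq S$, and together with monotonicity and idempotence makes $b$ an interior operator on $(\textsf{S}(L),\subseteq)$; the same proof records that ${-}$ is antitone. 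It therefore suffices to establish two facts: that $b(S)$ is open, and that $b(S)$ contains every open sublocale contained in $S$. With $b(S)\subseteq S$ in hand, these say precisely that $b(S)$ is the largest open sublocale of $S$.

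The genuinely localic ingredient is the complementarity of open and closed sublocales. Writing $\mathfrak{o}(a)$ and $\mathfrak{c}(a)$ for the open and closed sublocales determined by $a\in L$, one has $\mathfrak{o}(a)\vee\mathfrak{c}(a)=L$ and $\mathfrak{o}(a)\cap\mathfrak{c}(a)$ trivial. Since $\textsf{S}(L)$ is a co-frame it is in particular a distributive lattice, and in a distributive lattice the supplement (the least $U$ with $S\vee U=\top$, which satisfies $S\vee{-}S=\top$) of a complemented element coincides with its unique complement: if $x\vee x'=\top$ and $x\wedge x'=\bot$ then ${-}x\le x'$, whence $x'=x'\wedge(x\vee{-}x)=(x'\wedge x)\vee(x'\wedge{-}x)={-}x$. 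Consequently ${-}\mathfrak{c}(a)=\mathfrak{o}(a)$ and ${-}\mathfrak{o}(a)=\mathfrak{c}(a)$. I also use the elementary facts that $c$ carries every sublocale to a closed one and fixes closed sublocales.

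Now I would verify the two inclusions. Openness is immediate: $b(S)={-}\bigl(c({-}S)\bigr)$ and $c({-}S)$ is closed, so $b(S)$ is the supplement of a closed sublocale, hence open. For the absorption property, let $O=\mathfrak{o}(a)\subseteq S$ be any open sublocale below $S$. Antitonicity of ${-}$ gives ${-}S\subseteq{-}O={-}\mathfrak{o}(a)=\mathfrak{c}(a)$; applying the monotone $c$, which fixes the closed sublocale $\mathfrak{c}(a)$, yields $c({-}S)\subseteq\mathfrak{c}(a)$; and a final application of antitone ${-}$ gives $b(S)={-}c({-}S)\supseteq{-}\mathfrak{c}(a)=\mathfrak{o}(a)=O$. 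Thus $b(S)$ is an open sublocale contained in $S$ that dominates every open sublocale contained in $S$, i.e.\ the largest such, which is the assertion.

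The only real obstacle is the identity ${-}\mathfrak{c}(a)=\mathfrak{o}(a)$: that the supplement of a closed sublocale is exactly the complementary open one, not merely contained in it. The inclusion ${-}\mathfrak{c}(a)\subseteq\mathfrak{o}(a)$ is trivial from $\mathfrak{c}(a)\vee\mathfrak{o}(a)=L$, whereas the reverse inclusion is where locale theory genuinely enters, and it is what I would isolate through complementarity of open and closed sublocales together with distributivity of $\textsf{S}(L)$. This is consistent with the earlier remark that the peculiarly localic content of Ciraulo's result resides in this proposition rather than in the purely order-theoretic Propositions~\ref{prop:ip-monoid} and~\ref{prop:localic}.
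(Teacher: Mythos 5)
Your proof is correct, but it is organized differently from the paper's. The paper never verifies the universal property of ``largest open sublocale'' directly; instead it works entirely inside the operator monoid, encoding the localic input as three identities ($i={-}{-}i$ because open sublocales are complemented, and $c{-}i={-}i$, $i{-}c={-}c$ because the supplement of an open (resp.\ closed) sublocale is closed (resp.\ open)) and then squeezing: $i={-}{-}i\le{-}c{-}\le i{-}{-}\le i$, whence ${-}c{-}=i$ as operators on $(\textsf{S}(L),\subseteq)$. You instead check the defining property of the interior pointwise --- $b(S)$ is open, $b(S)\subseteq S$, and $b(S)\supseteq O$ for every open $O\subseteq S$ --- after first reducing the localic content to the single fact that ${-}$ interchanges $\mathfrak{o}(a)$ and $\mathfrak{c}(a)$, which you derive from complementedness plus finite distributivity of the co-frame. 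The two approaches consume essentially the same localic facts (yours is slightly stronger, as ${-}\mathfrak{c}(a)=\mathfrak{o}(a)$ and ${-}\mathfrak{o}(a)=\mathfrak{c}(a)$ imply the paper's three operator identities), but yours makes the supplement-equals-complement step explicit rather than assumed, and it delivers the stated universal property directly; the paper's version buys brevity and stays uniform with the inequational calculus used throughout Section~\ref{sec:pseudo}, at the cost of leaving the reader to unwind why ``$b$ coincides with $i$'' is the same assertion as the one in the proposition. Both are sound.
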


\begin{proof}
The co-frame $(\textsf{S}(L),\subseteq)$ satisfies $i={-}{-}i$ because open sublocales are complemented.
Ciraulo used this equation as follows.
Because the complement of a closed (resp.\ open) sublocale is open (resp.\ closed) we have
$c{-}i={-}i$ and $i{-}c={-}c$.
Hence $(c{-}\le{-}i)$: $c{-}(i\le\ident)$ and $({-}c\le i{-})$: $i{-}(\ident\le c)$.
Further, $({-}{-}i\le {-}c{-})$: ${-}(c{-}\le{-}i)$, $({-}c{-}\le i{-}{-})$: $({-}c\le i{-}){-}$,
and $(i{-}{-}\le i)$: $i({-}{-}\le\ident)$. Together these yield $i={-}{-}i\le {-}c{-}\le i{-}{-}\le i$.
Hence the operator $b$ coincides with the interior operator~$i$ on $(\textsf{S}(L),\subseteq)$.
\end{proof}

The next theorem settles a question left open by Ciraulo~\cite{2025_ciraulo}.

\begin{thm}\label{thm:localic}
The lower Hasse diagram in Figure~\nit{\ref{fig:localic}} is complete\nit.
\end{thm}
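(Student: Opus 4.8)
The plan is to deduce completeness of the lower diagram from that of the upper one by order reversal, and then to rerun Bergman's critical-pair method on the collapse. First I would exploit the duality built into sublocale lattices. On the co-frame $(\textsf{S}(L),\subseteq)$ the supplement ${-}$ is the co-pseudocomplement, so passing to the opposite order $(\textsf{S}(L),\supseteq)$ interchanges closure with interior and turns ${-}$ into an ordinary pseudocomplement; indeed the smallest closed sublocale containing $S$ in $(\textsf{S}(L),\subseteq)$ is literally the interior of $S$ in $(\textsf{S}(L),\supseteq)$. Hence the identity bijection $S\mapsto S$ carries the closure--supplement monoid of $(\textsf{S}(L),\subseteq)$ onto the interior--pseudocomplement monoid of $(\textsf{S}(L),\supseteq)$, sending $c\mapsto i$ and ${-}\mapsto{-}$ and reversing every edge. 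Because closed sublocales are complemented, the latter monoid satisfies $i={-}{-}i$, so Proposition~\ref{prop:localic} identifies it with the upper diagram of Figure~\ref{fig:localic}. This simultaneously verifies that every edge drawn in the lower diagram is valid and reduces the theorem to the completeness of the upper diagram, since a complete Hasse diagram stays complete under an order-reversing isomorphism.

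Next I would repeat, on the collapsed (upper) diagram, the argument of Proposition~\ref{prop:ip-monoid}. The join- and meet-irreducible nodes are exactly those marked in the figure; writing $J$ and $M$ for these sets, Lemma~$5$ of Bergman~\cite{1994_bergman} again singles out the critical pairs as precisely the dotted and dashed segments shown in the two upper components. No cross-relation between the two components can appear: the equation $i={-}{-}i$ identifies nodes only within a single component of $\M$, so the mutual unrelatedness of the components established in Proposition~\ref{prop:ip-monoid} passes to the quotient, reinforced by the same degenerate specialization giving $\ident\not\le{-}$. A short list of implications, each of the form ``$x\le y$ would entail a relation already excluded,'' then eliminates every dotted critical pair, exactly as in the implication table for $\M$, leaving only the four inequalities marked by dashed lines.

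The hard part is the final step: refuting those four dashed inequalities. The essential difference from Proposition~\ref{prop:ip-monoid} is that here the witnesses must be genuine locales, since the assertion concerns the class of sublocale co-frames and a separating poset not of the form $(\textsf{S}(L),\subseteq)$ would carry no weight. I would search for the witnesses among sublocale lattices of small finite $T_0$ spaces, or among explicitly presented small frames, computing $c$, $i$, and ${-}$ on an offending sublocale and checking that the inequality fails strictly; if a single locale can be found whose closure--supplement monoid realizes all the distinct operators with no relation beyond those drawn, that one example would dispatch all four cases at once. Constructing these locales and evaluating the delicate supplement on them is the principal obstacle, as sublocale lattices grow rapidly. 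Once the four counterexamples are secured, every unshown inequality fails on some locale, and the order reversal of the first step transfers completeness from the upper diagram back to the lower one, as required.
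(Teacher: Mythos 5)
Your first two steps track the paper's argument: the order-reversal reduction of the lower diagram to the upper one (this is exactly how the paper invokes duality, via Proposition~\ref{prop:localic} and the strictness of four dual edges), and the rerun of Bergman's method on the collapsed diagram, which in the paper reduces the eleven critical inequalities to four dashed ones by seven explicit implications. The gap is the final step, which you correctly flag as the hard part but leave unexecuted: you propose to \emph{search} for witnesses rather than exhibit them, and the search space you suggest provably cannot supply all four. A finite $T_0$ space is $T_D$ and scattered, so the sublocale co-frame of its frame of opens is the Boolean algebra of its subsets; there the supplement is genuine complementation, ${-}{-}=\ident$, and the critical inequality separating $ic{-}{-}$ from $ic$ (equivalently, its dual in the upper diagram) degenerates to an equality. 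No finite space---indeed no locale with Boolean sublocale lattice---can refute that pair, so ``small finite $T_0$ spaces or explicitly presented small frames'' is the wrong place to look for at least one of the four counterexamples.

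The paper's witnesses are: $(1)$ the locale $L=(\omega+1)^{op}$, whose smallest dense sublocale $X_{\neg\neg}$ satisfies ${-}X_{\neg\neg}=L$ and hence $ic{-}{-}(X_{\neg\neg})=\{0\}\neq L=ic(X_{\neg\neg})$---this is the essentially non-Boolean example your proposal would miss; and $(2)$--$(4)$ the sublocales of $\Omega(\mathbb{R})$ induced by $\mathbb{Q}$, $(0,1)$, and $\mathbb{R}\setminus\{0\}$, where the $T_D$ property of $\mathbb{R}$ guarantees $c(S_Y)=S_{\overline{Y}}$, $i(S_Y)=S_{Y^{\circ}}$, and ${-}(S_Y)=S_{X\setminus Y}$, so the remaining three refutations reduce to elementary Kuratowski-style computations with subsets of the line. (The acknowledgments note that even the author initially believed three witnesses sufficed; a fourth is genuinely needed.) Until counterexamples of this kind are written down and checked against the four surviving critical pairs, the proof is incomplete; everything preceding that point in your proposal is sound in outline but is also the routine part.
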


\begin{proof}
Ciraulo showed that the smallest dense sublocale $X_{\neg\neg}$ of $L:=(\omega+1)^{op}$ satisfies
${-}X_{\neg\neg}=L$. Since $X_{\neg\neg}$ is dense, it follows that
\[
ic{-}{-}(X_{\neg\neg})=ic({-}L)=ic(\{0\})=\{0\}\neq L=iL=ic(X_{\neg\neg}).\tag{1}
\]

A topological space $X$ satisfies the separation axiom $T_D$ if every $x\in X$ has an open neighborhood
$U$ such that $U\setminus\{x\}$ is open.
A well-known result in pointfree topology (e.g., \cite[Proposition~$2.3.1$]{2020_baboolal_picado_pillay_pultr})
states that distinct subspaces $Y,Z\subseteq X$ correspond to distinct sublocales $S_Y,S_Z\subseteq\Omega(X)$
if and only if $X$ is a $T_D$ space.
In this case the induced sublocales preserve inclusion:
\[
S_Y\subseteq S_Z\mbox{ if and only if }Y\subseteq Z.
\]
Consequently, for every subset $Y\subseteq X$ of a $T_D$ space,
\[
c(S_Y)=S_{\overline{Y}},\quad i(S_Y)=S_{Y^{\circ}},\quad\mbox{and}\quad{-}(S_Y)=S_{X\setminus Y}.
\]
Since $\mathbb{R}$ with the usual topology is $T_D$, these equations hold in $\Omega(\mathbb{R})$, giving
\[
ici(Y_{\mathbb{Q}})=Y_{\varnothing}\neq Y_{\mathbb{R}}=ic(Y_{\mathbb{Q}})=ic{-}{-}(Y_{\mathbb{Q}}),\tag{2}
\]

\vspace{-1.2\baselineskip}
\[
ici(Y_{(0,1)})=Y_{(0,1)}\neq Y_{[0,1]}=ci(Y_{(0,1)}),\tag{3}
\]
\[
i(Y_{\mathbb{R}\setminus\{0\}})=Y_{\mathbb{R}\setminus\{0\}}\neq Y_{\mathbb{R}}=
ici(Y_{\mathbb{R}\setminus\{0\}}).\tag{4}
\]

\medskip
It remains to show that $(1)$-$(4)$ disprove all critical inequalities in the lower diagram of Figure~\ref{fig:localic}.
To that end, we now apply Bergman's method to the upper diagram, beginning with the left component.

As before for~$\M$, the node $i$ is not join-irreducible and $b$ is not meet-irreducible.
Nodes with exactly one descending edge are $ib$, $bi$, $\ident$, $ibi$, $i{-}{-}$.
Nodes with multiple descending edges, at least one of which is the only ascending
edge of its lower node, are $b$, $bib$, $bi{-}{-}$, ${-}{-}$.
The remaining node $ibi{-}{-}$ is the join of the set of lower nodes of its descending edges,
and is therefore not join-irreducible.
Nodes with exactly one ascending edge are $bib$, $bi{-}{-}$, $ib$, ${-}{-}$, $bi$, $\ident$,
and those with multiple ascending edges---at least one of which is the only descending
edge of its upper node---are $ibi{-}{-}$, $ibi$, $i$.
The remaining node $i{-}{-}$ is the meet of the set of upper nodes of its ascending edges, and
hence not meet-irreducible.
The search for meet- and join-irreducible nodes in the right component is similar.
The first seven implications below reduce the eleven critical inequalities
in Figure~\ref{fig:localic} to the four dashed lines.
The subsequent implications show that strictness of four edges implies completeness of the upper diagram;
by order duality, if the duals of these four edges are strict in $(\textsf{S}(L),\subseteq)$,
then the lower diagram is complete. Equivalently, relations $(1)$-$(4)$ suffice.

\medskip\noindent
{
\renewcommand{\arraystretch}{1.2}
\renewcommand{\tabcolsep}{2pt}
\centering
\begin{tabular}{r@{\hspace{-.4pt}}clcr@{\hspace{-.4pt}}cl}
$(bi\le{-}{-}ib=ib)$&:&${-}({-}ib\le{-}bi)$,&\kern20pt&$(ibi={-}{-}ibi\le{-}{-})$&:&${-}({-}\le{-}ibi)$,\\
$(ib={-}{-}ib\le{-}ib{-}=bi{-}{-})$&:&${-}(ib{-}\le{-}ib)$,&&$({-}bib=ibi{-}\le{-})$&:&$(ibi\le{-}{-}){-}$,\\
$(i{-}{-}\le i\le bi)$&:&$i(i{-}{-}\le\ident)$,&&$(\ident\le{-}{-}\le bib)$&:&${-}({-}bib\le{-})$,\\
$(i{-}{-}={-}b{-}\le bi)$&:&${-}({-}bi\le b{-})$,&&&&\\
\end{tabular}\hspace{30pt}

}

\medskip
$(bi=bi{-}{-})$: $b(i{-}{-}\le bi)$, $(bib=bi{-}{-})$: $b(ib\le bi{-}{-})$,
$(ib=bib)$: $(bi\le ib)b$, $(b=bib)$: $b(\ident\le bib)$.
\end{proof}

\section{Chittenden Semigroups}\label{sec:chittenden}

In this section we study the operator semigroup generated by the maps $s$ and $t$ in Question~\ref{qst:chittenden}.

\begin{dfn}\label{dfn:chittenden}
Let \mth{$\{s,t\}^+$} denote the set of all nonempty finite words over the alphabet \mth{$\{s,t\}$.}
For integers \mth{$2\le m\le n$,} define \mth{$\C mn$} to be the
quotient \mth{$\{s,t\}^+/\equiv_{m,n}$} where \mth{$w_1\equiv_{m,n}w_2$} if and only if \mth{$w_1$} and
\mth{$w_2$} act identically \nit(\!\!\;as operator compositions\nit)\! on every poset \mth{$(L,\le)$} for all
order-preserving endofunctions \mth{$s,t:L\to L$} satisfying \mth{$s\le t$,} \mth{$s^m=s$,} and \mth{$t^n=t$.}
The induced order on \mth{$\C mn$} is defined by
\[
[w_1]\le [w_2]\iff\big(w_1(x)\le w_2(x)\mbox{ for all such }(L,\le,s,t)\mbox{ and all }x\in L\big)
\]
where \mth{$[w]$} denotes the equivalence class of \mth{$w$;}
for brevity we omit the square brackets in the sequel\!\;\nit.
\end{dfn}

The following lemma and corollary are immediate.

\begin{lem}\label{lem:symmetry}
Let \mth{$2\le m\le n$.}
Because order-preserving endofunctions \mth{$s\le t$} on a poset \mth{$(L,\le)$}\!\!\; correspond to
order-preserving endofunctions \mth{$t\le s$} on the dual poset \mth{$(L,\ge)$,} we have
\[
w_1\equiv_{m,n}w_2\ \Longleftrightarrow\ ({\sim}w_1)\equiv_{n,m}({\sim}w_2)
\]
for all \mth{$w_1,w_2\in\{s,t\}^+$} where \mth{${\sim}:\{s,t\}^+\to\{s,t\}^+$} denotes bitwise
negation \nit(interchanging \mth{$s$} and \mth{$t$);} for example\nit,
\mth{${\sim}(sts)=tst$.}
\end{lem}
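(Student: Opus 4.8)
The plan is to exploit the order-duality correspondence named in the lemma and to track how the bitwise negation ${\sim}$ interacts with it. First I would set up a bijection on witnessing systems. Given any $(L,\le,s,t)$ with $s,t$ order-preserving, $s\le t$, $s^m=s$, and $t^n=t$, I pass to the dual poset $(L,\ge)$ and set $S:=t$, $T:=s$. A map is order-preserving for $\le$ exactly when it is order-preserving for $\ge$, and the pointwise condition $s(x)\le t(x)$ for all $x$ is literally $t(x)\ge s(x)$ for all $x$; hence $(S,T)$ is order-preserving on $(L,\ge)$ with $S\le T$ in the dual order. Moreover $S^n=t^n=t=S$ and $T^m=s^m=s=T$, so $(L,\ge,S,T)$ is a legitimate witness for the pair $(n,m)$. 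The assignment $(L,\le,s,t)\mapsto(L,\ge,t,s)$ is an involution, and therefore a bijection between the class of $(m,n)$-systems and the class of $(n,m)$-systems.

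The second step is a purely combinatorial identity on words. For any $w\in\{s,t\}^+$, I observe that evaluating the negated word ${\sim}w$ under the substitution $s\mapsto t$, $t\mapsto s$ returns exactly the same composition as evaluating $w$ under $s\mapsto s$, $t\mapsto t$: the operator ${\sim}$ swaps the two letters and the substitution swaps them back, so each position of $w$ contributes the operator originally attached to its letter. Since composition of functions never refers to the order on $L$, this is an equality of operators on the underlying set, valid no matter which order we have placed on $L$. Concretely, with $S=t$ and $T=s$, the operator that ${\sim}w$ denotes on the dual system $(L,\ge,S,T)$ equals the operator that $w$ denotes on the original system $(L,\le,s,t)$.

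Combining the two steps finishes the argument. Fix $w_1,w_2$. By the word identity, for a matched pair of systems the equation $w_1=w_2$ holds on $(L,\le,s,t)$ if and only if ${\sim}w_1={\sim}w_2$ holds on $(L,\ge,t,s)$, since both assert that one and the same pair of composite maps on $L$ coincide. Running this equivalence across the bijection of the first step, $w_1$ and $w_2$ agree on \emph{every} $(m,n)$-system precisely when ${\sim}w_1$ and ${\sim}w_2$ agree on \emph{every} $(n,m)$-system; that is, $w_1\equiv_{m,n}w_2\iff({\sim}w_1)\equiv_{n,m}({\sim}w_2)$. The same bookkeeping applied to the induced order of Definition~\ref{dfn:chittenden} merely reverses it, giving $w_1\le w_2$ under $\equiv_{m,n}$ iff $({\sim}w_2)\le({\sim}w_1)$ under $\equiv_{n,m}$, which is the companion statement. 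I expect no genuine obstacle: the single point demanding care is keeping the letter-swap of ${\sim}$ aligned against the function-swap $S=t$, $T=s$ so that the two cancel rather than compound, and once the evaluation identity is stated cleanly the rest is immediate, consistent with the paper's remark that the lemma is immediate.
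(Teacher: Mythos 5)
Your argument is correct and is exactly the duality argument the paper has in mind: the paper labels this lemma ``immediate'' and its entire justification is the parenthetical remark that $s\le t$ on $(L,\le)$ corresponds to $t\le s$ on $(L,\ge)$, which you have simply spelled out (the involution on witnessing systems, the verification that $S=t$, $T=s$ satisfy $S^n=S$, $T^m=T$ in the dual, and the cancellation of the letter-swap ${\sim}$ against the function-swap). Nothing further is needed.
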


\begin{cor}\label{cor:symmetry}
\mth{$\C mn\cong\C nm$} for all \mth{$2\le m\le n$.}
\end{cor}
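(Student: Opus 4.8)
The plan is to lift the correspondence of Lemma~\ref{lem:symmetry} to a structural isomorphism between the ordered semigroups $\C mn$ and $\C nm$. Define $\Phi\colon\C mn\to\C nm$ on representatives by $\Phi([w])=[{\sim}w]$, where ${\sim}$ is the bitwise negation. The first task is to see that $\Phi$ is a well-defined bijection, and this is exactly the content of Lemma~\ref{lem:symmetry}: the biconditional $w_1\equiv_{m,n}w_2\Longleftrightarrow({\sim}w_1)\equiv_{n,m}({\sim}w_2)$ gives simultaneously that $\Phi$ is well defined and injective, while surjectivity follows because ${\sim}$ is an involution on $\{s,t\}^+$, so that $[v]\mapsto[{\sim}v]$ is a two-sided inverse read through the same biconditional.

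Next I would record that $\Phi$ respects the semigroup operation. Since ${\sim}$ acts letter by letter it preserves concatenation, ${\sim}(w_1w_2)=({\sim}w_1)({\sim}w_2)$, whence $\Phi([w_1][w_2])=\Phi([w_1])\Phi([w_2])$; thus $\Phi$ is an isomorphism of semigroups.

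The only point requiring care---and the step I expect to be the main obstacle---is the effect of $\Phi$ on the induced order. Here I would make explicit the duality underlying Lemma~\ref{lem:symmetry}: the assignment $(L,\le,s,t)\mapsto(L,\ge,t,s)$ is an involutive bijection between the valid $(m,n)$-configurations and the valid $(n,m)$-configurations, the conditions $s\le t$, $s^m=s$, $t^n=t$ becoming $t\le s$ in $(L,\ge)$ together with $t^n=t$ and $s^m=s$. Under this assignment, evaluating a word $w$ with $(s,t)$ on $(L,\le)$ and evaluating ${\sim}w$ with the swapped pair on $(L,\ge)$ yield the same self-map of the underlying set, because the letter-swap in ${\sim}$ exactly undoes the function-swap $s\leftrightarrow t$. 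Since only the order is reversed in passing to $(L,\ge)$, the relation $[w_1]\le[w_2]$ in $\C mn$---a pointwise inequality quantified over all $(m,n)$-data---translates into the reverse pointwise inequality for ${\sim}w_1,{\sim}w_2$ quantified over all $(n,m)$-data. Hence $[w_1]\le[w_2]$ in $\C mn$ if and only if $\Phi([w_2])\le\Phi([w_1])$ in $\C nm$, so $\Phi$ is an anti-isomorphism of the underlying posets.

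I would then conclude that $\Phi$ exhibits $\C mn$ and $\C nm$ as the same ordered semigroup up to order duality, which is the sense of $\cong$ used throughout (compare the dual pairs in Figure~\ref{fig:monoids}). The work is essentially bookkeeping: the substantive content is already in Lemma~\ref{lem:symmetry}, and the only thing to watch is that ${\sim}$ and the function-swap compose to the identity on underlying maps, so that the single order reversal coming from $(L,\le)\mapsto(L,\ge)$ is the net effect on $\le$.
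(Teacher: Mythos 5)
Your proof is correct and follows the paper's route exactly: the paper declares the corollary immediate from Lemma~\ref{lem:symmetry}, and your $\Phi([w])=[{\sim}w]$ is precisely the witnessing map, with well-definedness and bijectivity coming from the lemma's biconditional and the homomorphism property from the fact that ${\sim}$ preserves concatenation. Your additional observation that $\Phi$ reverses the induced order (so it is a semigroup isomorphism that is an anti-isomorphism of the underlying posets) is accurate and consistent with the symmetry visible in Figure~\ref{fig:st}.
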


\begin{lem}\label{lem:key}
Let \mth{$2\le m\le n$,} \mth{$d=\gcd(m-1,n-1)$,} and \mth{$\ell=\operatorname{lcm}(m-1,n-1)$.}
In~\mth{$\C mn$} we have\nit:

\medskip\noindent
\nit{(i)}~\mth{$s^{k\ell+1}=s$} and \mth{$t^{k\ell+1}=t$} for all \mth{$k\ge1$,}

\medskip\noindent
\nit{(ii)}~\mth{$swt=stt$} and \mth{$tws=tts$} for all \mth{$w\in\{s,t\}^{\ell+1}$,}

\medskip\noindent
\nit{(iii)}~\mth{$s^kt=s^{k-1}t^2=\cdots=st^k$} and \mth{$t^ks=t^{k-1}s^2=\cdots=ts^k$} for all \mth{$k\ge1$,}

\medskip\noindent
\nit{(iv)}~\mth{$s^{kd+1}t=st$} and \mth{$t^{kd+1}s=ts$} for all \mth{$k\ge1$.}
\end{lem}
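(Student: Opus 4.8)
The plan is to build everything from two primitives: the pointwise periodicity of part (i), and the single inequality $s\le t$ propagated through monotone composition. I would first settle (i): from $s^m=s$ the exponent of $s$ may be reduced modulo $m-1$ (for exponents $\ge1$), and likewise $t$ modulo $n-1$; since $\ell$ is a common multiple of $m-1$ and $n-1$, the exponent $k\ell+1$ is $\equiv1$ in both, giving $s^{k\ell+1}=s$ and $t^{k\ell+1}=t$.

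The heart of the argument is (iii), and the engine is a one-sided monotone chain. For any $a\ge2$, $b\ge1$ the bound $st^b\le t^{b+1}$ (immediate from $s\le t$) and monotonicity of $s^{a-1}$ give $s^at^b=s^{a-1}(st^b)\le s^{a-1}(tt^b)=s^{a-1}t^{b+1}$, so for each $N$ the chain $s^{N-1}t\le s^{N-2}t^2\le\cdots\le st^{N-1}$ is increasing. The crux is to collapse this chain at a resonant length: at $N=\ell+2$ part (i) forces both endpoints $s^{\ell+1}t$ and $st^{\ell+1}$ to equal $st$, so every term equals $st$; in particular $s^2t^\ell=st$. Right-multiplying by $t$ and invoking $t^{\ell+1}=t$ yields the base swap $s^2t=st^2$. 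Being an operator identity, it may be substituted into any context, telescoping each chain into equalities and proving the first half of (iii); the dual chain $t^{N-1}s\ge\cdots\ge ts^{N-1}$ is decreasing and collapses the same way, giving $t^2s=ts^2$ and the second half.

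Part (ii) is then a squeeze. For $|w|=\ell+1$ the length bound $s^{\ell+1}\le w\le t^{\ell+1}$ (the same monotone argument, letter by letter) together with (i) gives $s\le w\le t$, whence $s^2t\le swt\le st^2$; since $s^2t=st^2$ by (iii), all three coincide with $stt$. The dual yields $tws=tts$.

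Finally (iv) is where $d=\gcd(m-1,n-1)$ enters. By (iii) the value $s^at^b$ depends only on $a+b$ for $a,b\ge1$; write $P_N$ for it, so $st=P_2$ and $s^{kd+1}t=P_{kd+2}$. Periodicity (i) gives $P_N=P_{N+(m-1)}=P_{N+(n-1)}$, so adding $m-1$ or $n-1$ to the index leaves $P$ unchanged. I would close with B\'ezout: since $d\mathbb{Z}$ is generated by $m-1$ and $n-1$, for each $k$ one can pick nonnegative integers with $2+a(m-1)+b(n-1)=(kd+2)+a'(m-1)+b'(n-1)$, forcing $P_2=P_{kd+2}$, i.e.\ $s^{kd+1}t=st$ (and dually $t^{kd+1}s=ts$). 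I expect the main obstacle to be exactly the passage from the one-sided chain to equality in (iii): every inequality points toward $t$, so nothing forces a collapse until periodicity is applied at the length $N=\ell+2$, and recognizing that this is the resonant length is the one genuinely non-formal step.
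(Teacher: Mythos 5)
Your proof is correct, and it runs on the same engine as the paper's---sandwiching words between expressions that the periodicity of part (i) forces to coincide---but it is organized differently in two respects, both of which work. First, you reverse the dependency between (ii) and (iii): the paper proves (ii) directly by squeezing $swt$ between $st^{\ell+1}t$ and $swt^{\ell+1}$, extracts the swap $sst=stt$ as the special case $w=s^{\ell+1}$, and iterates it to get (iii); you instead obtain $s^2t=st^2$ straight from the monotone chain of length $\ell+2$ (whose two endpoints are both $st$ by (i)), telescope to get (iii), and then recover (ii) as a two-line corollary via $s=s^{\ell+1}\le w\le t^{\ell+1}=t$. Second, for (iv) the paper establishes only the base case $s^{d+1}t=st$, by choosing exponents with the Chinese remainder theorem so that a chain of inequalities closes, and then inducts on $k$; your argument---that by (iii) the operator $s^at^b$ depends only on $a+b$, that this value is invariant under adding $m-1$ or $n-1$ to the index, and that B\'ezout then identifies the indices $2$ and $kd+2$---handles all $k$ uniformly and is arguably cleaner. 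The only step to spell out in your (iv) is the sign-juggling: $kd=x(m-1)+y(n-1)$ may have a negative coefficient, so the corresponding multiple must be moved to the other side of $2+a(m-1)+b(n-1)=(kd+2)+a'(m-1)+b'(n-1)$ to keep everything nonnegative, exactly as you indicate.
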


\begin{proof}
Let $w\in\{s,t\}^{\ell+1}$. The assumptions of Definition~\ref{dfn:chittenden} imply (i) and (ii) as follows:

\medskip\noindent
{
\centering
\begin{tabular}{l}
$s^{k\ell+1}=s^{\frac{k(n-1)}{d}(m-1)+1}=s\mbox{\ \ and\ \ }t^{k\ell+1}=t^{\frac{k(m-1)}{d}(n-1)+1}=t$,\\[6pt]
$swt\le st^{\ell+1}t=stt=s^{2\ell+1}tt\le swt^{\ell+1}=swt$,\\[6pt]
$tws\le tt^{\ell+1}s=tts=tts^{3\ell+1}\le t^{\ell+1}ws^{\ell+1}=tws$.\\
\end{tabular}

}

\medskip\noindent
From these relations we have $sst=ss^{\ell+1}t=stt$ and $tss=ts^{\ell+1}s=tts$. To obtain~(iii),
repeatedly substitute $stt$ for $sst$ in $s^kt$ and $tts$ for $tss$ in $ts^k$,
performing the substitution $k-1$ times in each case.

Finally, we prove~(iv) by induction on $k$.
For $s^{d+1}t=st$ to hold, it suffices to find integers
$j,k\equiv1\pmod{m-1}$ and $r,s\equiv1\pmod{n-1}$ such that
\[
st=s^jt\le st^dt^r=st^{d+1}=s^{d+1}t=s^ks^dt\le st^s=st,
\]

\begin{figure}
\centering
\begin{tikzpicture}[scale=.5]
	\node (t) at (-1,3) [circle,fill,scale=.5,label={[label distance=-1pt]180:$t$}] {};
	\node (s) at (-1,-3) [circle,fill,scale=.5,label={[label distance=-1pt]180:$s$}] {};
	\draw (s) -- (t);
\end{tikzpicture}\hspace{6pt}
\begin{tikzpicture}[scale=.5]
	\node (t) at (-1,3) [circle,fill,scale=.5,label={[label distance=-1pt]180:$tt$}] {};
	\node (st) at (-2,0) [circle,fill,scale=.5,label={[label distance=-1pt]180:$st$}] {};
	\node (ts) at (0,0) [circle,fill,scale=.5,label={[label distance=-1pt]0:$ts$}] {};
	\node (s) at (-1,-3) [circle,fill,scale=.5,label={[label distance=-1pt]180:$ss$}] {};
	\draw (s) -- (st) -- (t);
	\draw (s) -- (ts) -- (t);
\end{tikzpicture}\hspace{6pt}
\begin{tikzpicture}[scale=.5]
	\node (t) at (-1,3) [circle,fill,scale=.5,label={[label distance=-1pt]180:$ttt$}] {};
	\node (tst) at (-1,1) [circle,fill,scale=.5,label={[label distance=-4pt]135:$tst$}] {};
	\node (st) at (-2,0) [circle,fill,scale=.5,label={[label distance=-1pt]180:$sst$}] {};
	\node (ts) at (0,0) [circle,fill,scale=.5,label={[label distance=-1pt]0:$tts$}] {};
	\node (sts) at (-1,-1) [circle,fill,scale=.5,label={[label distance=-3pt]-135:$sts$}] {};
	\node (s) at (-1,-3) [circle,fill,scale=.5,label={[label distance=-1pt]-180:$sss$}] {};
	\draw (sts) -- (ts) -- (tst) -- (t);
	\draw (s) -- (sts) -- (st) -- (tst);
\end{tikzpicture}\hspace{6pt}
\begin{tikzpicture}[scale=.5]
	\node (t) at (-1,3) [circle,fill,scale=.5,label={[label distance=-1pt]180:$tttt$}] {};
	\node (tst) at (-1,1) [circle,fill,scale=.5,label={[label distance=-4pt]135:$ttst$}] {};
	\node (st) at (-2,0) [circle,fill,scale=.5,label={[label distance=-1pt]180:$ssst$}] {};
	\node (ts) at (0,0) [circle,fill,scale=.5,label={[label distance=-1pt]0:$ttts$}] {};
	\node (sts) at (-1,-1) [circle,fill,scale=.5,label={[label distance=-3pt]-135:$ssts$}] {};
	\node (s) at (-1,-3) [circle,fill,scale=.5,label={[label distance=-1pt]180:$ssss$}] {};
	\draw (sts) -- (ts) -- (tst) -- (t);
	\draw (s) -- (sts) -- (st) -- (tst);
\end{tikzpicture}
\hspace{1pt}\raisebox{46pt}{$\cdots$}
\raisebox{-4pt}{\begin{tikzpicture}[scale=.5]
	\node (t) at (-1,3) [circle,fill,scale=.5,label={[label distance=-1pt]180:\raisebox{4pt}{$t^k$}}] {};
	\node (tst) at (-1,1) [circle,fill,scale=.5,label={[label distance=-4pt]135:$t^{k-2}st$}] {};
	\node (st) at (-2,0) [circle,fill,scale=.5,label={[label distance=-1pt]180:\raisebox{4pt}{$s^{k-1}t$}}] {};
	\node (ts) at (0,0) [circle,fill,scale=.5,label={[label distance=-1pt]0:\raisebox{4pt}{$t^{k-1}s$}}] {};
	\node (sts) at (-1,-1) [circle,fill,scale=.5,label={[label distance=-3pt]180:\raisebox{-16pt}{$s^{k-2}ts$}}] {};
	\node (s) at (-1,-3) [circle,fill,scale=.5,label={[label distance=-1pt]180:\raisebox{4pt}{$s^k$}}] {};
	\draw (sts) -- (ts) -- (tst) -- (t);
	\draw (s) -- (sts) -- (st) -- (tst);
\end{tikzpicture}}
\hspace{1pt}\raisebox{46pt}{$\cdots$}
\caption{The set $\CUP{n=2}{\infty}\ \CUP{m=2}{n}
\ {\overset{\infty}{\underset{\scriptstyle k=1}{\vrule width0pt height8pt depth2.3pt\smash{\bigcup}}}}\,P(m,n,k)$
where $P(m,n,k)$ is the partial order on $\{s,t\}^k$ in $\C mn$.}
\label{fig:st}
\end{figure}

\noindent
subject to the congruence conditions
\[
j\equiv d+1\pmod{n-1},\quad s\equiv d+1\pmod{m-1}.
\]
Such exponents exist by the Chinese remainder theorem, since
$d+1\equiv1\pmod{d\hspace{1pt}}$. The same argument applies symmetrically to
\[
ts=ts^j\le t^dt^rs=t^{d+1}s=ts^{d+1}=ts^ks^d\le t^ss=ts.
\]
If $s^{kd+1}t=st$, then $s^{(k+1)d+1}t=s^dst=st$, and similarly for $t^{kd+1}s=ts$.
\end{proof}

\begin{lem}\label{lem:genkey}
Let \mth{$2\le m\le n$.} The following hold in \mth{$\C mn$,} where \mth{$|w|$} denotes the length of \mth{$w$:}

\medskip\noindent
\nit{(i)}~\mth{$swt=st^{|w|+1}$} and \mth{$tws=ts^{|w|+1}$} for all \mth{$w\in\{s,t\}^*$,} where
\mth{$\{s,t\}^*=\{s,t\}^+\cup\{w\mathop||w|=0\}$,}

\medskip\noindent
\nit{(ii)}~\mth{$swtw's=st^{|w|+|w'|+1}s$} and \mth{$twsw't=ts^{|w|+|w'|+1}t$} for all \mth{$w,w'\in\{s,t\}^*$.}
\end{lem}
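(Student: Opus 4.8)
The plan is to prove part~(i) by a monotonicity sandwich and then to obtain part~(ii) purely algebraically from part~(i) together with Lemma~\ref{lem:key}(iii). First I would record the elementary fact that, because $s$ and $t$ are order-preserving and $s\le t$, operator composition is monotone in each slot: if $a\le a'$ then $paq\le pa'q$ for every word $p$ (which, being a composition of order-preserving maps, is order-preserving) and every word $q$. Replacing the letters of an arbitrary word $w$ one at a time by $t$ (resp.\ by $s$) therefore yields the sandwich
\[
s^{|w|}\le w\le t^{|w|}.
\]

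For part~(i), I would apply this sandwich with the outer letters fixed as $s$ and $t$, obtaining $swt\le s\,t^{|w|}\,t = st^{|w|+1}$ and $swt\ge s\,s^{|w|}\,t = s^{|w|+1}t$. The crux is that Lemma~\ref{lem:key}(iii) identifies these two bounds, since $s^{|w|+1}t = st^{|w|+1}$; hence $swt = st^{|w|+1}$. (The base case $|w|=0$ reads $st=st$.) The companion identity $tws = ts^{|w|+1}$ follows from the same sandwich, its upper and lower bounds $t^{|w|+1}s$ and $ts^{|w|+1}$ being identified by Lemma~\ref{lem:key}(iii) as well.

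For part~(ii) no further sandwich is needed. Writing $p=|w|$ and $q=|w'|$ and substituting the two forms of part~(i) into the middle of the word gives
\[
swtw's = (swt)\,w's = st^{p+1}w's = st^{p}\,(tw's) = st^{p}\,ts^{q+1} = st^{p+1}s^{q+1}.
\]
Since $t^{p+1}s^{q+1} = ts^{p+q+1} = t^{p+q+1}s$ by Lemma~\ref{lem:key}(iii), left-multiplication by $s$ yields $swtw's = st^{p+q+1}s = st^{|w|+|w'|+1}s$. The dual $twsw't = ts^{|w|+|w'|+1}t$ follows symmetrically (or from Lemma~\ref{lem:symmetry}), using the $tws$ form of part~(i) in place of the $swt$ form.

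I expect the only non-formal step to be the repeated appeal to Lemma~\ref{lem:key}(iii): without it the sandwich for part~(i) merely gives $s^{|w|+1}t\le swt\le st^{|w|+1}$, two a~priori distinct operators, and it is precisely the identities $s^{k}t=st^{k}$ and $t^{k}s=ts^{k}$ that force the $s$-heavy lower bound and the $t$-heavy upper bound to meet. Everything else is monotonicity and substitution of established operator identities, so I do not anticipate any genuine obstacle beyond bookkeeping the exponents.
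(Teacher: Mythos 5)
Your proof is correct and uses essentially the same mechanism as the paper: a monotonicity sandwich $s^{|w|}\le w\le t^{|w|}$ whose two bounds are identified by Lemma~\ref{lem:key}(iii). The only cosmetic difference is in part~(ii), where the paper runs a second sandwich on the whole word $swtw's$ while you substitute the two instances of part~(i) and invoke Lemma~\ref{lem:key}(iii) once more to merge the exponents; both reduce to the same identities and the bookkeeping checks out.
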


\noindent
\textit{Proof}.
By Lemma~\ref{lem:key}(iii), $swt\le st^{|w|+1}=s^{|w|+1}t\le swt$ and $tws\le t^{|w|+1}s=ts^{|w|+1}\le tws$.
Similarly,
\[
swtw's\le st^{|w|+|w'|+1}s=s^{|w|+1}t^{|w'|+1}s=s^{|w|+1}ts^{|w'|+1}\le swtw's,
\]

\vspace{-18pt}
\[
\pushQED{\qed}
twsw't\le t^{|w|+1}st^{|w'|+1}=ts^{|w|+1}t^{|w'|+1}=ts^{|w|+|w'|+1}t\le twsw't.\qedhere
\popQED
\]

\smallskip
The next proposition, which shows that $\C mn$ is finite, follows from
Lemmas~\ref{lem:key} and~\ref{lem:genkey}.

\begin{prop}\label{prop:st}
Let \mth{$2\le m\le n$.}
Every equivalence class in \mth{$\C mn$} has at least one element in the set
\[
W(m,n):=S(m{-}1)\cup T(n{-}1)\cup U(d\hspace{1pt})\subseteq\{s,t\}^+,\mbox{ where }d=\gcd(m-1,n-1)\mbox{ and}
\]

\vspace{-.78\baselineskip}
\[
S(k)=\{s,s^2,\dots,s^k\},\quad T(k)=\{t,t^2,\dots,t^k\},\quad
U(k)=\smash{\CUP{j=1}{k}}\{s^jt,s^jts,t^js,t^jst\}.
\]
\end{prop}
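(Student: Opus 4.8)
The plan is to show that every word $w\in\{s,t\}^+$ can be rewritten, using only the identities of Lemmas~\ref{lem:key} and~\ref{lem:genkey} together with the defining relations $s^m=s$ and $t^n=t$, into a word lying in $W(m,n)$; since $W(m,n)$ is finite, this simultaneously yields the finiteness of $\C mn$ asserted in the surrounding text. I would organize the argument as a case analysis on the first and last letters of $w$: the two \emph{pure} cases (all $s$'s or all $t$'s) and the four \emph{mixed} cases according to whether $w$ begins/ends with $s$ or $t$.

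First, the pure cases. If $w=s^p$, then repeatedly applying $s^m=s$ lowers the exponent by $m-1$ until it lies in $\{1,\dots,m-1\}$, placing $w$ in $S(m-1)$; dually a power $t^p$ lands in $T(n-1)$ via $t^n=t$. These use only the defining relations. For the mixed cases I would use Lemma~\ref{lem:genkey} to collapse the interior of the word and then Lemma~\ref{lem:key} to normalize the surviving exponent modulo $d$. If $w$ starts with $s$ and ends with $t$, write $w=sut$ with $u\in\{s,t\}^*$; then Lemma~\ref{lem:genkey}(i) gives $w=st^{|u|+1}$, and Lemma~\ref{lem:key}(iii) rewrites this as $s^{|u|+1}t$. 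The identity $s^{d+1}t=st$ of Lemma~\ref{lem:key}(iv) then makes $s^{p}t$ $d$-periodic in $p$: indeed $s^{p+d}t=s^{p-1}(s^{d+1}t)=s^{p-1}(st)=s^pt$ for $p\ge1$, so the exponent reduces into $\{1,\dots,d\}$ and $w\in U(d)$. The symmetric case $t\cdots s$ lands in $\{t^js\}\subseteq U(d)$.

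If instead $w$ starts and ends with $s$ and contains a $t$, I would split $w$ at its first $t$ as $w=s^a\,t\,Y\,s=s\,(s^{a-1})\,t\,Y\,s$ with $a\ge1$ and $Y\in\{s,t\}^*$, and apply Lemma~\ref{lem:genkey}(ii) to obtain $w=st^{a+|Y|}s$; rewriting via (iii) gives $s^{a+|Y|}ts$, which the same $d$-periodicity argument—now applied to $s^pts$—reduces to some $s^jts\in U(d)$ with $1\le j\le d$. The remaining case $t\cdots t$ is symmetric and yields $t^jst\in U(d)$. Since every mixed word falls into exactly one of these four templates, each class meets $U(d)$ and each pure class meets $S(m-1)\cup T(n-1)$, so every class meets $W(m,n)$.

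The main obstacle is not the collapse of the interior—Lemma~\ref{lem:genkey} does that outright—but the final periodicity reduction in the two same-endpoint cases, where one must verify that $s^pts$ (and dually $t^pst$) is genuinely $d$-periodic in the exponent and that the reduced exponent stays in $\{1,\dots,d\}$ (the value $d$ arising when $p\equiv0\pmod d$). This requires combining parts (iii) and (iv) of Lemma~\ref{lem:key} rather than quoting a single identity, and some care is needed to keep the exponent at least $1$ at each application of $s^{d+1}t=st$ so that the rewriting $s^{p+d}ts=s^{p-1}(s^{d+1}t)s=s^pts$ remains legal. Once this is checked in all six cases, the proof is complete.
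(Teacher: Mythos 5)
Your proof is correct and follows the route the paper intends: the paper states only that Proposition~\ref{prop:st} ``follows from Lemmas~\ref{lem:key} and~\ref{lem:genkey},'' and your six-case analysis (two pure cases reduced by $s^m=s$, $t^n=t$; four mixed cases collapsed by Lemma~\ref{lem:genkey} and then normalized modulo $d$ via Lemma~\ref{lem:key}(iii)--(iv)) is precisely the omitted elaboration, including the correct care about keeping exponents in $\{1,\dots,d\}$ when applying $s^{d+1}t=st$ inside a longer word.
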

\vspace{.1\baselineskip}

\begin{prop}\label{prop:edges}
All edges hold in Figure~\nit{\ref{fig:st}.}
\end{prop}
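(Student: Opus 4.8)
The plan is to obtain every edge from one monotonicity principle together with the word identities of Lemma~\ref{lem:key}. Since $s$ and $t$ are order-preserving, every word in $\{s,t\}^+$ acts as an order-preserving operator, and composition respects the pointwise order: if $u\le v$ then $w_1uw_2\le w_1vw_2$ for all $w_1,w_2$ (left-multiply using monotonicity of $w_1$, right-multiply by evaluating at $w_2(x)$). Feeding the base inequality $s\le t$ into this yields the \emph{letterwise domination principle}: if two words of equal length agree letter for letter except that some occurrences of $s$ in the first become $t$ in the second, then the first is $\le$ the second in $\C mn$. As this uses nothing beyond $s\le t$ and order-preservation, it holds uniformly in $m$ and $n$, which is exactly what the union in Figure~\ref{fig:st} requires.

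First I would dispatch the short words on their own, since the generic six-node diagram degenerates below length $3$ (for $k=2$ the nodes $s^{k-2}ts$ and $t^{k-1}s$ coincide, as do $s^{k-1}t$ and $t^{k-2}st$). For $k=1$ the sole edge is $s\le t$; for $k=2$ each of $ss\le st$, $ss\le ts$, $st\le tt$, $ts\le tt$ is a single $s\to t$ replacement.

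For $k\ge3$ I would check the six edges of the generic diagram against its displayed representatives. Four are immediate: $s^k\le s^{k-2}ts$ and $t^{k-2}st\le t^k$ each change one $s$ to $t$, while $s^{k-2}ts\le t^{k-1}s$ and $s^{k-1}t\le t^{k-2}st$ follow by turning the leading block of $s$'s into $t$'s (the result being literally the target word). The main obstacle is the pair of mixed-endpoint edges $s^{k-2}ts\le s^{k-1}t$ and $t^{k-1}s\le t^{k-2}st$: comparing the displayed representatives directly would demand an illegal $t\to s$ change. The remedy is to realign representatives first, using Lemma~\ref{lem:key}(iii) to rewrite $s^{k-1}t=s^{k-2}t^2$ and $t^{k-1}s=t^{k-2}s^2$ (valid only for $k\ge3$, which is precisely why $k\le2$ was handled separately); after this each edge reduces to a single legal $s\to t$ replacement. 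Thus the only genuine content lies in the choice of representative in those two cases, and since every ingredient holds for all $2\le m\le n$, all edges hold throughout Figure~\ref{fig:st}.
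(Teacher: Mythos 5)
Your proof is correct and follows essentially the same route as the paper's: both derive every edge from monotonicity together with $s\le t$ (your ``letterwise domination'') and invoke Lemma~\ref{lem:key}(iii) exactly for the two mixed-endpoint edges $s^{k-2}ts\le s^{k-1}t$ and $t^{k-1}s\le t^{k-2}st$. Your separate treatment of the degenerate cases $k\le 2$ is a small point of extra care that the paper's single chain glosses over, but it does not change the argument.
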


\noindent
\textit{Proof}.
Lemma~\ref{lem:key}(iii) together with $s\le t$ imply that the following inequalities belong to $P(m,n,k)$ for all $k\ge1$:
\[
\pushQED{\qed}
s^k\le s^{k-2}ts\le st^{k-1}=s^{k-1}t\le t^{k-2}st\le t^k\mbox{\ \ and\ \ }s^{k-2}ts\le t^{k-1}s=
ts^{k-1}\le t^{k-2}st.\qedhere
\popQED
\]

\begin{lem}\label{lem:counterexamples}
Figure~\nit{\ref{fig:st}} is complete\nit, for the following hold in general in \mth{$\C mn$} for all \mth{$2\le m\le n$:}

\medskip\smallskip
{
\centering
\renewcommand{\tabcolsep}{2pt}
\begin{tabular}{rlcrl}
\nit{(i)}&\mth{$t^k\not\le tsw$} for all \mth{$w\in\{s,t\}^*$} and \mth{$k\ge1$,}&\quad\quad&
\nit{(iii)}&\mth{$tw\not\le sw'$} for all \mth{$w,w'\in\{s,t\}^*$,}\\[6pt]
\nit{(ii)}&\mth{$stw\not\le s^k$} for all \mth{$w\in\{s,t\}^*$} and \mth{$k\ge1$,}&&
\nit{(iv)}&\mth{$wt\not\le w's$} for all \mth{$w,w'\in\{s,t\}^*$.}\\
\end{tabular}

}
\end{lem}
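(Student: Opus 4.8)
The plan is to prove the four non-relations (i)--(iv) by exhibiting, for each, a single poset with order-preserving \emph{idempotent} maps $s\le t$; since $s^2=s$ forces $s^m=s$ and $t^2=t$ forces $t^n=t$ for every $m,n\ge2$, each such model simultaneously witnesses the failure of the corresponding inequality in \emph{every} $\C mn$. Completeness of Figure~\ref{fig:st} then follows from (i)--(iv): for each length $k\ge3$ the drawn rung is the diamond $s^k<s^{k-2}ts<\{s^{k-1}t,\,t^{k-1}s\}<t^{k-2}st<t^k$ (the shorter rungs degenerate), whose only incomparable pair is $\{s^{k-1}t,\,t^{k-1}s\}$, so completeness amounts to disproving both directions for that pair and disproving the reverse of every covering relation.

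For (i)--(iii) I would use the two-point chain $\{0<1\}$ with degenerate maps. For (iii), $tw\not\le sw'$, take $s\equiv0$ and $t\equiv1$ (both constant): reading a word right-to-left, its value is decided by its outermost letter, so $tw$ evaluates to $1$ and $sw'$ to $0$, and $1\not\le0$. For (i), $t^k\not\le tsw$, take $s\equiv0$ and $t=\ident$ at the point $1$: then $t^k(1)=1$ while $tsw(1)=s(w(1))=0$. Dually, for (ii), $stw\not\le s^k$, take $s=\ident$ and $t\equiv1$ at the point $0$: then $stw(0)=t(w(0))=1$ while $s^k(0)=0$.

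The crux is (iv), $wt\not\le w's$, where both words may begin and end with either letter, so the outermost-letter trick fails. Here I would use the three-element chain $0<x<1$ with $s$ the interior operator collapsing $\{0,x\}$ to $0$ (and fixing $1$) and $t$ the closure operator expanding $\{x,1\}$ to $1$ (and fixing $0$); these are order-preserving, idempotent, and satisfy $s\le t$. The point is that $0$ and $1$ are common fixpoints of both maps, hence of every word. Evaluating at the middle point $x$, the innermost letter sends $x$ to a fixpoint that every later letter preserves: $wt(x)=w(t(x))=w(1)=1$ while $w's(x)=w'(s(x))=w'(0)=0$, so $wt(x)=1\not\le0=w's(x)$ for all $w,w'$.

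Finally I would match each unforced relation $u\le v$ in a rung to one of (i)--(iv) by inspecting first and last letters: if $u$ begins with $t$ and $v$ with $s$ then $u\not\le v$ by (iii), and if $u$ ends with $t$ and $v$ with $s$ then $u\not\le v$ by (iv). A short case check shows every unforced relation falls under one of these except the two in which $u$ and $v$ share both their first and their last letters, namely $t^k\le t^{k-2}st$ and $s^{k-2}ts\le s^k$. For these I would apply a Lemma~\ref{lem:key}(iii) rewrite: $t^{k-2}st=ts^{k-2}t=ts(s^{k-3}t)$ has the form $tsw$, so (i) gives $t^k\not\le t^{k-2}st$, and dually $s^{k-2}ts=st^{k-2}s=st(t^{k-3}s)$ has the form $stw$, so (ii) gives $s^{k-2}ts\not\le s^k$. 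I expect the main obstacle to be the (iv) construction---choosing maps whose images at the splitting point $x$ are common fixpoints, so that the arbitrary trailing words $w,w'$ cannot undo the distinction---together with the bookkeeping of the normal-form rewrites that funnel the two ``same-first-and-last-letter'' non-edges into (i) and (ii).
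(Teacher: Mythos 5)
Your proposal is correct and uses essentially the same counterexamples as the paper: the constant/identity pairs on a two-element chain for (i)--(iii), and for (iv) the three-element chain with $s$ collapsing the bottom two points and $t$ collapsing the top two, so that the middle point is sent to a common fixpoint before the outer word acts. Your additional bookkeeping reducing completeness of each rung to (i)--(iv) via first/last letters, with the two same-letter cases funneled through the Lemma~\ref{lem:key}(iii) rewrites, is sound and merely makes explicit what the paper leaves implicit.
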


\begin{proof}
Let $f_1\equiv1$ and $f_2\equiv2$ on $\{1,2\}$.
The pairs $f_1\le\ident$, $\ident\le f_2$, and $f_1\le f_2$
prove (i), (ii), and (iii), respectively.
The endofunctions $s,t$ on $\{1,2,3\}$ defined by $s(1)=s(2)=1$, $s(3)=3$, $t(1)=1$, and $t(2)=t(3)=3$
satisfy $wt(2)=3$ and $w's(2)=1$ for all $w,w'\in\{s,t\}^*$,
proving (iv).
\end{proof}

\begin{dfn}\label{dfn:shift}
For integers \mth{$u<v$,} let \mth{$\sigma_{\!u,\,v}$} denote the circular shift operator on
\mth{$\{u,\dots,v\}$,} defined by
\[
\sigma_{u,\,v}(k)=\begin{cases}k+1,&\mbox{if }u\le k<v,\\u,&\mbox{if }k=v.\end{cases}
\]
\end{dfn}

\begin{lem}\label{lem:completeness}
Figure~\nit{\ref{fig:st}} displays every inequality that holds in general in \mth{$\C mn$}
for arbitrary \mth{$2\le m\le n$.}
\end{lem}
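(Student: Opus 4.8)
The plan is to establish completeness as the converse of Proposition~\ref{prop:edges}: that proposition already shows every edge of Figure~\ref{fig:st} is a genuine inequality in each $\C mn$, so here I only need to show that every inequality \emph{not} drawn fails in some $\C mn$. First I would reduce the problem to the displayed nodes. By Lemma~\ref{lem:genkey}, in every $\C mn$ each word of length $k$ collapses to one of the six canonical forms $s^k$, $s^{k-2}ts$, $s^{k-1}t$, $t^{k-1}s$, $t^{k-2}st$, $t^k$ (these coincide in pairs to the four forms $ss,st,ts,tt$ when $k=2$, and to $s,t$ when $k=1$). Since these are exactly the nodes of $P(m,n,k)$, it then suffices to refute, for each pair of nodes not joined by an upward path in Figure~\ref{fig:st}, the corresponding inequality. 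I would split the argument into a within-length case and a cross-length case.

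For the within-length case I would read off from Lemma~\ref{lem:counterexamples}(iii)--(iv) the two monotonicity constraints that any generically valid $w_1\le w_2$ must obey: with $s<t$, the first letters satisfy $\mathrm{first}(w_1)\le\mathrm{first}(w_2)$ (otherwise $w_1=tw\not\le sw'=w_2$ by (iii)) and likewise $\mathrm{last}(w_1)\le\mathrm{last}(w_2)$ by (iv). Classifying the six forms by their $(\mathrm{first},\mathrm{last})$ pair---$(s,s)$ for $s^k$ and $s^{k-2}ts$, $(s,t)$ for $s^{k-1}t$, $(t,s)$ for $t^{k-1}s$, and $(t,t)$ for $t^{k-2}st$ and $t^k$---these two constraints already eliminate every non-displayed inequality except the two ``same pair, wrong direction'' relations $s^{k-2}ts\le s^k$ and $t^k\le t^{k-2}st$. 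For these I would rewrite $s^{k-2}ts=st^{k-2}s=st(t^{k-3}s)$ and $t^{k-2}st=ts^{k-2}t=ts(s^{k-3}t)$ using Lemma~\ref{lem:genkey}, turning them into instances of the patterns refuted by Lemma~\ref{lem:counterexamples}(ii) and (i) respectively. The cases $k\le2$ need only (iii)--(iv), since there each $(\mathrm{first},\mathrm{last})$ pair has a unique representative, and Lemma~\ref{lem:symmetry} roughly halves the bookkeeping, the two special relations (and the pairs (i)/(ii) and (iii)/(iv)) being ${\sim}$-images of one another.

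The cross-length case is where I expect the real obstacle, since Lemma~\ref{lem:counterexamples} says nothing about, say, $s^2$ versus $s^3$, which can become comparable or even equal in particular $\C mn$; these relations must nonetheless fail \emph{for some} $m,n$. Here I would deploy the circular shift of Definition~\ref{dfn:shift}: on the $p$-element antichain $\{1,\dots,p\}$ every self-map is order-preserving, $\sigma_{1,p}$ has order $p$, and $\sigma_{1,p}^{\,m}=\sigma_{1,p}$ exactly when $p\mid m-1$. Taking $s=t=\sigma_{1,p}$ and $m=n=p+1$ satisfies every hypothesis of Definition~\ref{dfn:chittenden}; then any word of length $k$ acts as $\sigma_{1,p}^{\,k}$, sending $1$ to $1+(k\bmod p)$. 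Given distinct lengths $k\ne k'$, the choice $p=|k-k'|+1$ makes $\sigma_{1,p}^{\,k}(1)$ and $\sigma_{1,p}^{\,k'}(1)$ distinct, hence incomparable, antichain elements, so this single instance simultaneously refutes \emph{every} inequality between a length-$k$ word and a length-$k'$ word. Together with the within-length analysis, this shows that each non-displayed inequality fails in some $\C mn$, which completes the proof.
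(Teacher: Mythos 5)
Your reduction to the six canonical forms and your within-length analysis via Lemma~\ref{lem:counterexamples}(i)--(iv) are fine (and essentially reproduce what that lemma already delivers). The gap is in the cross-length case, and it comes from a misreading of the quantifiers. The lemma must be read as: for \emph{each fixed} pair $2\le m\le n$, the figure exhausts the inequalities of that particular $\C mn$ --- this is what Theorem~\ref{thm:st} needs from it, since the distinctness of the representatives in $W(m,n)$ is a statement about one $\C mn$ at a time. Your cross-length argument instead refutes a non-displayed inequality in \emph{some} $\C{p+1}{p+1}$ of your own choosing: the instance $s=t=\sigma_{1,p}$ on a $p$-point antichain with $p=|k-k'|+1$ satisfies $s^m=s$ and $t^n=t$ only when $p\mid\gcd(m-1,n-1)$, so in general it is not an admissible instance for the $\C mn$ under discussion. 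For example, to separate $s$ from $s^2$ in $\C44$ your recipe gives $p=2$, but $\sigma_{1,2}^4=\ident\ne\sigma_{1,2}$, so that map is not a witness for $\C44$ at all. Indeed your blanket claim that \emph{every} inequality between words of distinct lengths fails cannot hold inside a fixed $\C mn$, since there $s=s^{\ell+1}$ with $\ell=\operatorname{lcm}(m-1,n-1)$.

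The repair is not simply to shrink $p$ to a divisor of $d=\gcd(m-1,n-1)$: the $d$-point antichain (which is exactly the instance the paper uses) only separates words whose lengths are incongruent modulo $d$, and a fixed $\C mn$ can contain distinct same-letter powers of congruent length --- e.g.\ $t\ne t^4$ in $\C47$, where $d=3$ and $1\equiv 4\pmod 3$. Separating these is the genuinely delicate part of the paper's proof: for each $z\in\{2,\dots,n-1\}$ it chooses a prime power $p^r$ with $p^r\mid n-1$ and $p^r\nmid z-1$, builds the fan poset with a bottom element below $p^r$ pairwise incomparable tops, sets $s\equiv 1$ and lets $t$ cyclically permute the tops; then $t^n=t$ holds while $t$ and $t^z$ send some top to two incomparable tops, and monotonicity plus $t^n=t$ propagates $t^{z_1}\not\le t^{z_2}$ to all distinct exponents in $\{1,\dots,n-1\}$ (dually for $s$). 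Your proposal contains no construction capable of this separation, so the cross-length half of the argument does not go through as written.
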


\begin{proof}
Fix $2\le m\le n$. By Lemma~\ref{lem:key}(iv), we have $s^{k_1}t=s^{k_2}t$ and $t^{k_1}s=t^{k_2}s$ in $\C mn$
for all $k_1,k_2\ge1$ such that $k_1\equiv k_2\pmod{d\hspace{1pt}}$. Hence, by Proposition~\ref{prop:st},
it suffices to prove that the following hold in general in $\C mn$:
$s^{k_1}\not\le s^{k_2}$ for all $k_1\neq k_2$ in $\{1,\dots,m-1\}$,
$t^{k_1}\not\le t^{k_2}$ for all $k_1\neq k_2$ in $\{1,\dots,n-1\}$, and
$w\not\le w'$ for all $w,w'\in\{s,t\}^+$ such that $|w|\not\equiv|w'|\pmod{d\hspace{1pt}}$.

Let $z\in\{2,\dots,n-1\}$.
Since $z-1<n-1$, there exists a prime power $p^r$ such that
$p^r\mid n-1$ and $p^r\nmid z-1$.
Define the poset $L$ as follows:

\begin{center}
\begin{tikzpicture}[scale=.4]
	\node (1) at (0,0) [circle,fill,scale=.5,label={[label distance=1pt]270:\hspace{.5pt}$1$}] {};
	\node (2) at (-4,3) [circle,fill,scale=.5,label={[label distance=-1pt]180:$2$}] {};
	\node (3) at (-2,3) [circle,fill,scale=.5,label={[label distance=-1pt]180:$3$}] {};
	\node (4) at (0,3) [circle,fill,scale=.5,label={[label distance=-1pt]180:$4$}] {};
	\node (dots) at (3,3) [label={[label distance=0pt]180:$\cdots$}] {};
	\node (d) at (4,3) [circle,fill,scale=.5,label={[label distance=-1pt]0:$p^r+1$}] {};
	\draw (1) -- (2);
	\draw (1) -- (3);
	\draw (1) -- (4);
	\draw (1) -- (d);
\end{tikzpicture}
\end{center}

\noindent
Set $s\equiv1$ on $L$, $t(1)=1$, and $t(\ell)=\sigma_{2,\,p^r+1}(\ell)$ for $2\le\ell\le p^r+1$. 
Clearly $s$ and $t$ preserve the order on $L$, $s\le t$, and $s^m=s$.
For any $k\ge1$,
\[
t^k=t\ \Longleftrightarrow\ \sigma_{2,\,p^r+1}^k=\sigma_{2,\,p^r+1}\ \Longleftrightarrow\ k\equiv1\hspace{-6pt}\pmod{p^r}.
\]
Hence $t^n=t$, and since $t(p^r+1)=2<t^z(p^r+1)$, we also have $2=t^z((t^z)^{-1}(2))<t((t^z)^{-1}(2))$.
Thus $t\not\le t^z$ and $t^z\not\le t$ in $\C mn$.
By monotonicity of $t$ and $t^n=t$, it follows that $t^{z_1}\not\le t^{z_2}$
for all distinct $z_1\neq z_2$ in $\{1,\dots,n-1\}$.
A dual argument yields the same conclusion for powers of $s$ in $\{1,\dots,m-1\}$.

Now let $L=\{1,\dots,d\}$ be the trivial (discrete) poset, where $d=\gcd(m-1,n-1)$, and define $s=t=\sigma_{1,\,d}$ on $L$.
Then for all $w,w'\in\{s,t\}^+$, $w=w'\ \Longleftrightarrow\ |w|\equiv|w'|\pmod{d\hspace{1pt}}$,
and since $w=w'$ iff $w\le w'$ on $L$, it follows that $w\not\le w'$ holds in general in $\C mn$
whenever $|w|\not\equiv|w'|\pmod{d\hspace{1pt}}$ if $s$ and $t$ satisfy Definition~\ref{dfn:chittenden}.
Monotonicity and $s\le t$ are immediate.
Let $\ell=\operatorname{lcm}(m-1,n-1)$.
Then for all $k\ge1$, $\sigma^k_{1,\,d}=\sigma_{1,\,d}\ \Longleftrightarrow\ k\equiv1\pmod{d\hspace{1pt}}$,
hence $\vrule width0pt height12pts^m=\smash{s^{\frac{\ell}{n-1}d+1}}=s$ and $t^n=\smash{t^{\frac{\ell}{m-1}d+1}}=t$.
\end{proof}

\begin{thm}\label{thm:st}
Let \mth{$2\le m\le n$.}  The set \mth{$W(m,n)$} in Proposition\nit{~\ref{prop:st}} is comprised of exactly one
representative from each equivalence class in \mth{$\C mn$.} Moreover\nit, each such representative is in reduced form\nit.
\end{thm}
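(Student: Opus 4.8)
The plan is to prove that $W(m,n)$ is a \emph{transversal} for $\C mn$ (one word per class) by supplementing the surjectivity already in hand with an injectivity argument, and then to obtain reducedness from the length behaviour of the defining reductions. Proposition~\ref{prop:st} already shows that every class meets $W(m,n)$, so it remains only to prove that distinct words in $W(m,n)$ lie in distinct classes; the bijection $W(m,n)\leftrightarrow\C mn$ and hence the first assertion follow immediately. For distinctness it suffices, for each pair of distinct words, to exhibit a single failed inequality, since $w_1=w_2$ would force $w_1\le w_2\le w_1$.

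For the injectivity I would classify the six kinds of words in $W(m,n)$ — $s^i$, $t^j$, $s^at$, $s^ats$, $t^as$, $t^ast$ — by their (first,\,last)-letter pairs, namely $(s,s),(t,t),(s,t),(s,s),(t,s),(t,t)$. If two distinct words have different first letters, Lemma~\ref{lem:counterexamples}(iii) (a $t$-initial word is never $\le$ an $s$-initial word) separates them; if they share a first letter but differ in the last, Lemma~\ref{lem:counterexamples}(iv) does so. This disposes of every cross-type comparison except the two in which both endpoints coincide with those of a pure power: $s^i$ versus $s^ats$ (both $(s,s)$) and $t^j$ versus $t^ast$ (both $(t,t)$). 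For these I would first rewrite by Lemma~\ref{lem:key}(iii), obtaining $s^ats=st^as=st(t^{a-1}s)$ and $t^ast=ts^at=ts(s^{a-1}t)$, which brings them into the shapes $stw$ and $tsw$; then Lemma~\ref{lem:counterexamples}(ii) yields $s^ats\not\le s^i$ and Lemma~\ref{lem:counterexamples}(i) yields $t^j\not\le t^ast$, so these pairs are unequal too.

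For comparisons \emph{within} a single type I would invoke Lemma~\ref{lem:completeness}. Distinct pure powers $s^i,s^{i'}$ with $i,i'\le m-1$ (resp.\ $t^j,t^{j'}$ with indices $\le n-1$) are incomparable by the power-incomparability established there. For each mixed family $\{s^at\},\{s^ats\},\{t^as\},\{t^ast\}$ the exponent $a$ ranges over $\{1,\dots,d\}$, so two distinct members of one family have lengths differing by a nonzero amount strictly less than $d$; the length-mod-$d$ criterion of Lemma~\ref{lem:completeness} then makes them incomparable. With all $\binom{6}{2}$ type-pairs and all intra-type pairs accounted for, injectivity holds and $W(m,n)$ is a transversal.

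Finally I would read ``in reduced form'' as ``of minimal length in its class'' (equivalently, admitting no length-shortening defining rewrite). The key observation is that every identity used to reach $W(m,n)$ is length-non-increasing: those of Lemma~\ref{lem:genkey}, such as $swt=st^{|w|+1}$, preserve length, while the exponent-collapsing identities of Lemma~\ref{lem:key}(i),(iv) strictly shorten. Hence the reduction of Proposition~\ref{prop:st} carries any word $w'$ to its $W(m,n)$-representative without increasing length; combining this with the now-established uniqueness of that representative gives $|w|\le|w'|$ whenever $w\in W(m,n)$ and $w'\equiv w$, so each representative is a shortest word in its class. I expect the main obstacle to be organizational rather than conceptual: guaranteeing that the case analysis for injectivity is genuinely exhaustive, and in particular treating the two coincident-endpoint pairs correctly, where the preliminary rewrite via Lemma~\ref{lem:key}(iii) is indispensable before Lemma~\ref{lem:counterexamples}(i),(ii) apply.
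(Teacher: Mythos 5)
Your proposal is correct and follows essentially the same route as the paper: existence of representatives from Proposition~\ref{prop:st}, uniqueness from the incomparabilities of Lemmas~\ref{lem:counterexamples} and~\ref{lem:completeness}, and reducedness from the observation that the reduction into $W(m,n)$ never increases word length. The paper compresses the entire injectivity case analysis into a single citation of Lemma~\ref{lem:completeness} (whose statement already asserts that Figure~\ref{fig:st} displays \emph{every} valid inequality), so your explicit first/last-letter bookkeeping via Lemma~\ref{lem:counterexamples} is just an unpacking of what that citation delegates.
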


\begin{proof}
Proposition~\ref{prop:st} and Lemma~\ref{lem:completeness} imply the first statement, from which the second follows
because the components $S(k)$, $T(k)$, and $U(k)$ of $W(m,n)$ are each a union over the index set $\{1,\dots,k\}$.
\end{proof}

The next corollary is immediate.

\begin{cor}\label{cor:st}
\mth{$\C mn\subseteq\C nn\subseteq\C {n+1}{n+1}$} for all \mth{$2\le m\le n$.}
\end{cor}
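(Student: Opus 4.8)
The plan is to read both inclusions as inclusions of the underlying sets of the three monoids, realized through their canonical reduced representatives, and to reduce everything to elementary bookkeeping with $\gcd$ and the monotonicity of the building blocks $S$, $T$, $U$. First I would invoke Theorem~\ref{thm:st}, which identifies $\C{m}{n}$ (resp.\ $\C{n}{n}$, $\C{n+1}{n+1}$) set-theoretically with its reduced-representative set $W(m,n)$ (resp.\ $W(n,n)$, $W(n+1,n+1)$) inside $\{s,t\}^+$. Under this identification the two claimed inclusions become the purely combinatorial statements $W(m,n)\subseteq W(n,n)$ and $W(n,n)\subseteq W(n+1,n+1)$; the maps they induce on equivalence classes, $[w]\mapsto[w]$, are then automatically well defined and injective, since by Theorem~\ref{thm:st} each target set consists of pairwise distinct representatives of distinct classes.

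For the first inclusion I would note that $d=\gcd(m-1,n-1)$ divides $m-1$, so $d\le m-1\le n-1$, and that $S$, $T$, $U$ are each monotone in their integer argument (each being a union over an index set $\{1,\dots,k\}$, exactly as used in the proof of Theorem~\ref{thm:st}). Hence $S(m-1)\subseteq S(n-1)$ and $U(d)\subseteq U(n-1)$, while the $T$-component is common to both, giving $W(m,n)=S(m-1)\cup T(n-1)\cup U(d)\subseteq S(n-1)\cup T(n-1)\cup U(n-1)=W(n,n)$. For the second inclusion I would first read off the data of $\C{n+1}{n+1}$, whose two parameters both equal $n+1$: the relevant exponents are $(n+1)-1=n$ and $d=\gcd(n,n)=n$, so Proposition~\ref{prop:st} gives $W(n+1,n+1)=S(n)\cup T(n)\cup U(n)$. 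The inclusion $W(n,n)\subseteq W(n+1,n+1)$ then follows from $n-1\le n$ by the same monotonicity.

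The one genuine point to get right---and the only place a naive attempt could go wrong---is the \emph{meaning} of $\subseteq$. One should resist reading it as an order- or monoid-embedding: the relation $\equiv_{m,n}$ of Definition~\ref{dfn:chittenden} is strictly coarser on $s$-powers than $\equiv_{n,n}$ when $m<n$, so the additional identifications available in $\C{m}{n}$ (for instance $s^{m}=s$) alter \emph{both} products and comparabilities. Concretely, the collapse $s^{2}=s$ in $\C{2}{3}$ forces $s\le st\le t^{2}$, hence $s\le t^{2}$, whereas in $\C{3}{3}$ the length-invariant modulo $d=2$ supplied by Lemma~\ref{lem:completeness} gives $s\not\le t^{2}$---even though $s$ and $t^{2}$ both lie in $W(2,3)\subseteq W(3,3)$. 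Thus the content of the corollary is exactly the nesting of canonical-form sets established above, and with that reading fixed the result is indeed immediate from Theorem~\ref{thm:st}.
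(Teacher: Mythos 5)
Your proof is correct and matches the paper's intended (unstated) argument: the corollary is declared immediate from Theorem~\ref{thm:st}, and its content is exactly the containment of canonical-representative sets $W(m,n)\subseteq W(n,n)\subseteq W(n+1,n+1)$, which you verify via $\gcd(m-1,n-1)\le m-1\le n-1\le n$ and the monotonicity of $S$, $T$, $U$ in their integer arguments. Your closing caveat that $\subseteq$ must be read as an inclusion of underlying sets of reduced forms, not an order- or monoid-embedding (with the $s\le t^2$ example in $\C23$ versus $\C33$), is a correct and worthwhile clarification rather than a gap.
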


The following multiplication algorithm holds by Theorem~\ref{thm:st} and Lemmas~\ref{lem:key}
and~\ref{lem:genkey}.

\begin{prop}\label{prop:multiplication}
Let \mth{$2\le m\le n$,} \mth{$d=\gcd(m-1,n-1)$,} \mth{$w,w'\in\C mn$,} and \mth{$\lambda=|w|+|w'|$.}
For \mth{$p=r_{m-1}(\lambda)$,} \mth{$q=r_{n-1}(\lambda)$,}
\mth{$u=r_d(\lambda-2)$,} and \mth{$v=r_d(\lambda-1)$} where \mth{$r_k(x)=1+\big((x-1)\ \mbox{mod}\ k\big)$,}
we have

\bigskip
{
\centering
\renewcommand{\tabcolsep}{2pt}
}\\
\footnotesize\raisebox{5pt}{\makebox[0pt]{$\phantom{\overset{6}{\underset{k=1}{\bigcup}}}$}$\C35$}\\
\end{tabular}

\caption{The Hasse diagram of $\C mn$ for selected $(m,n)$.}
\label{fig:hasse}
\end{figure}

Theorem~\ref{thm:chittenden} generalizes to $\C mn$ as follows.

\begin{prop}\label{prop:idempotence}
Let \mth{$2\le m\le n$} and \mth{$d=\gcd(m-1,n-1)$.} In \mth{$\C mn$,}
\mth{$(st)^k=st$} and \mth{$(ts)^k=ts$} where
\[
k=\begin{cases}\frac{d}{2}+1,&\mbox{if }d\mbox{ is even\hspace{.5pt}\nit,}\\d+1,&\mbox{if }d\mbox{ is odd\!\;\nit.}\end{cases}
\]
\end{prop}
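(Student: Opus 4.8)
The plan is to collapse each of $(st)^k$ and $(ts)^k$ to a single word of the form $s^rt$ (resp.\ $t^rs$) and then invoke the $d$-periodicity established in Lemma~\ref{lem:key}(iv); the only real work is an exponent computation that splits on the parity of $d$.

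First I would rewrite the powers. The word $(st)^k$ has length $2k$, begins with $s$, and ends with $t$, so writing it as $s\,w\,t$ with $w$ its interior word of length $2k-2$, Lemma~\ref{lem:genkey}(i) gives
\[
(st)^k = s\,w\,t = st^{(2k-2)+1} = st^{2k-1},
\]
and symmetrically $(ts)^k = ts^{2k-1}$. Applying Lemma~\ref{lem:key}(iii) then moves all the repetition onto the first letter: $st^{2k-1}=s^{2k-1}t$ and $ts^{2k-1}=t^{2k-1}s$.

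Next I would reduce modulo $d$. Lemma~\ref{lem:key}(iv) states that $s^{jd+1}t=st$ and $t^{jd+1}s=ts$ for every $j\ge1$, so it suffices to exhibit an integer $j\ge1$ with $2k-1=jd+1$, i.e.\ with $2(k-1)=jd$. This is where the parity of $d$ enters. If $d$ is even and $k=\tfrac d2+1$, then $2(k-1)=d$, so $j=1$ works; if $d$ is odd and $k=d+1$, then $2(k-1)=2d$, so $j=2$ works. In either case $j\ge1$, and Lemma~\ref{lem:key}(iv) collapses $s^{2k-1}t=st$ and $t^{2k-1}s=ts$, yielding $(st)^k=st$ and $(ts)^k=ts$ as claimed.

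The only obstacle is the elementary but essential observation that $2$ is not invertible modulo an even $d$: this is exactly what forces the exponent to halve from $d+1$ in the odd case to $\tfrac d2+1$ in the even case, and it is the reason for the case distinction in the statement. (The same computation shows this $k$ is in fact the least exponent $>1$ for which the identities hold, since by Lemma~\ref{lem:completeness} the words $s^rt$ for $1\le r\le d$ are pairwise distinct; but only the stated equalities are needed here.)
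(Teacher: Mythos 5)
Your proof is correct and follows essentially the same route as the paper's: both reduce $(st)^k$ to a word $s^{2k-1}t$ and then invoke the mod-$d$ periodicity of Lemma~\ref{lem:key}(iv), with the identical parity split $2(k-1)=jd$. The only cosmetic difference is that the paper packages the reduction through Proposition~\ref{prop:multiplication} (whose $\lambda=|w|+|w'|$ computation gives the same exponent $2k-1$ reduced mod $d$), whereas you unwind that proposition into its constituent Lemmas~\ref{lem:genkey}(i) and~\ref{lem:key}(iii); your closing remark on minimality likewise matches the paper's discussion immediately following the proposition.
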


\begin{proof}
Suppose first that $d$ is even. Since $w=(st)^{d/2}$ begins with $s$ and $w'=st$
ends with $t$, Proposition~\ref{prop:multiplication} gives
$(st)^{d/2+1}=s^{[(d+2)-1]\bmod d}t=st$.
If $d$ is odd, the same proposition yields $(st)^{d+1}=st$.
The identity $(ts)^k=ts$ follows by duality.
\end{proof}

Let $2\le m\le n$ and $d=\gcd(m-1,n-1)$. In general, $s^jt\neq st$ in $\C mn$
for $2\le j\le d$ by Theorem~\ref{thm:st}.
For even (resp.\ odd) $d$ this implies $(st)^k=s^{2k-1}t\neq st$ for $2\le k\le\frac
d2\ (\mbox{resp.\ }2\le k\le\hspace{-.2pt}\frac{d+1}2\hspace{-.5pt})$.
Now suppose $d$ is odd and $d\ge3$.
Then $2k-1\equiv2,4,\dots,d-1\pmod{d\hspace{1pt}}$ for
$k=\frac{d+3}2,\frac{d+5}2,\dots,\frac{d+d}2$, and hence, by Proposition~\ref{prop:multiplication},
$(st)^k=s^{2k-1}t\in\{s^2t,s^4t,\dots,s^{d-1}t\}$.
By Theorem~\ref{thm:st}, each of these differs from $st$, so $(st)^k\neq st$.
Since the case $d=1$ is trivial, we conclude that the answer to Question~\ref{qst:chittenden}
is $I(m,n)=k$, where $k$ is the exponent given in Proposition~\ref{prop:idempotence}.

We now turn to global orderings and collapses. 

\begin{prop}\label{prop:global_orderings}
Let \mth{$2\le m\le n$.} Every global ordering of \mth{$\C mn$} is a global collapse of \mth{$\C mn$.}
\end{prop}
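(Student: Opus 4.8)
The plan is to mirror the deduction that established the analogous fact for~$\K$. A global ordering $P$ of $\C mn$ is, by definition, realized by some poset $(L,\le,s,t)$ meeting the hypotheses of Definition~\ref{dfn:chittenden}, and that same poset induces a collapse $C$, with $\{w_1,w_2\}\in C$ exactly when $w_1=w_2$ on~$L$. Write $\preceq_L$ for the induced ordering and $\le$ for the universal order of $\C mn$ recorded in Figure~\ref{fig:st} (so $w_1\le w_2$ means $w_1\preceq_{L'}w_2$ for every admissible $(L',\le,s,t)$). The partial order on the $C$-classes generated by the universal relations is always contained in $\preceq_L$, since $\preceq_L$ refines $\le$ and honours every equation of~$C$. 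Hence the proposition reduces to the reverse inclusion: each inequality $w_1\preceq_L w_2$ is already forced by the universal order together with~$C$. Granting this, any two posets inducing the same collapse induce the same ordering, so $P$ is recoverable from $C$ and is therefore the ordered form of a global collapse.

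For the reverse inclusion I would invoke Theorem~\ref{thm:st} to replace $w_1,w_2$ by their representatives in $W(m,n)$ and then split on the universal comparability of the pair. If $w_1\le w_2$ universally there is nothing to check, while if $w_2\le w_1$ universally then $w_1\preceq_L w_2$ forces the equation $w_1=w_2\in C$, which bridges the classes. The content lies in the universally \emph{incomparable} pairs, which by Lemma~\ref{lem:symmetry} need only be treated up to interchanging $s$ and~$t$. Distinct pure powers $t^i,t^j$ (and dually $s^i,s^j$) I would settle at once: the relation $t^n=t$ makes $t$ restrict to an order-automorphism of finite order dividing $n-1$ on its image, so an inequality $t^i\preceq_L t^j$ is carried by the orbit of this automorphism into a cyclic chain of inequalities and collapses to $t^i=_L t^j\in C$.

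For the incomparable middles $A_k=s^{k-1}t=st^{k-1}$ and $B_k=ts^{k-1}=t^{k-1}s$ of the diamond $P(m,n,k)$ I would exploit the sandwiching in Proposition~\ref{prop:edges}. Left-multiplying $A_k\preceq_L B_k$ by $s$ and reducing via Lemmas~\ref{lem:key} and~\ref{lem:genkey} sends both sides into the next diamond, as $sA_k=s^kt$ (a middle) and $sB_k=s^{k-1}ts$ (its lower junction), where the universal order supplies the reverse inequality; the two then collapse and give a forced equation. Because Lemma~\ref{lem:key}(iv) identifies $s^{kd+1}t$ with $st$, iterating this step cycles through the finitely many diamonds modulo $d$ and---just as in the power case---forces some diamond's middle to meet an adjacent junction, a collapse that bridges $A_k$ to $B_k$ through one universal edge. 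The identity $st\preceq_L ts\iff sts=_L st$ in $\C22$ (where $d=1$) is the prototype.

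The main obstacle I anticipate is making this cyclic propagation precise for every $d$ and uniform in $(m,n)$, and above all handling the cross-length incomparable pairs---for instance $s$ against $sts$ in $\C24$---which straddle diamonds of different lengths and so are not sandwiched by a single $P(m,n,k)$. Unlike the case of $\K$, where each inequality is equivalent to one equation, here a bridging collapse must in general be assembled from several forced equations obtained by successive left- and right-multiplications, and the delicate point is to show that for every admissible poset these multiplications terminate in equations already present in $C$, leaving no inequality beyond those the collapse forces. Establishing this termination and completeness of the bridging is where the real work of the proof resides.
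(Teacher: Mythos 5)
Your framing is the right one and matches the paper's: a global ordering is determined by its collapse provided every inequality holding on a poset $L$ is recoverable from the universal order together with equations it forces on $L$. Your treatment of pure powers is correct and is essentially the paper's argument for its conditions (i)--(ii) (the paper iterates $s^k=s^{k+p}s^j\le s^{2k+p}$ with $p=(m-1)-j$ up to $s^k\le s^{r(m-1)+j}=s^j$; your order-automorphism-on-the-image argument is an equivalent repackaging). From that point on, however, the proposal is a plan rather than a proof, and you say so yourself: the ``termination and completeness of the bridging'' for mixed and cross-length pairs is precisely the content of the proposition, and it is left open. Concretely, a single left-multiplication by $s$ turns $s^{k-1}t\preceq_L t^{k-1}s$ into the equation $s^kt=s^{k-1}ts$, but when $d=\gcd(m-1,n-1)>1$ that equation does not return the original inequality, because $k$ and $k-1$ lie in different residue classes modulo $d$; the claim that iterating ``cycles through the diamonds modulo $d$'' and eventually closes up is asserted, not established. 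Nothing at all is done for pairs such as $s^j$ versus $t^k$, nor for the cross-length pairs you yourself flag as the hard case.

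The paper closes these gaps by reducing to four explicit families of equivalences and proving each by exponent arithmetic. The device you are missing is the systematic use of $\ell=\operatorname{lcm}(m-1,n-1)$ as the multiplier: from $s^j\le t^k$ one multiplies by $s^{\ell}$ on the left and $t$ on the right to get $s^jt=s^{\ell+j}t\le s^{\ell}t^{k+1}=s^{\ell+k}t=s^kt$ (Lemma~\ref{lem:key}), which the pure-power case upgrades to the equation $s^jt=s^kt$; the same $\ell$ then lets one travel back, since right-multiplying that equation by $t^{\ell-1}$ gives $s^j=s^{\ell+j}\le s^jt^{\ell}=s^kt^{\ell}\le t^{\ell+k}=t^k$. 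It is this round trip---choosing the multiplier so that the resulting equation still remembers the original pair---that your successive left- and right-multiplications lack, and without it the proposal does not yet prove the proposition.
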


\begin{proof}
This holds provided each of the following conditions is true in $\C mn$:

\medskip\noindent
(i)~$ws^jw'\le ws^kw'\,\Longleftrightarrow\,ws^jw'=ws^kw'$ for all $w,w'\in\{s,t\}^*$
and $1\le j,k\le m-1$,

\medskip\noindent
(ii)~$wt^jw'\le wt^kw'\,\Longleftrightarrow\,wt^jw'=wt^kw'$ for all $w,w'\in\{s,t\}^*$
and $1\le j,k\le n-1$,

\medskip\noindent
(iii)~$s^j\le t^k\,\Longleftrightarrow\,s^jt=s^kt$ for all $1\le j\le m-1$, $1\le k\le n-1$,

\medskip\noindent
(iv)~$t^k\le s^j\,\Longleftrightarrow\,t^k=s^j$ for all $1\le j\le m-1$, $1\le k\le n-1$.

\bigskip\noindent
(i)~By monotonicity, the partial order on $\C mn$ respects left-multiplication by words $w\in\{s,t\}^*$.
For arbitrary $1\le j,k\le m-1$,
\[
(ws^jw'\le ws^kw'\mbox{ for all }w,w'\in\{s,t\}^*)\ \Longleftrightarrow\ (s^jw'\le s^kw'
\mbox{ for all }w'\in\{s,t\}^*)\ \Longleftrightarrow\ s^j\le s^k.
\]
Hence it suffices to show that $s^j\le s^k\,\Longrightarrow\,s^k\le s^j$.
Suppose $s^j\le s^k$ with $1\le j,k\le m-1$, and let $p=(m-1)-j$.
Then $s^k=s^{k+(m-1)}=s^{k+p}(s^j)\le s^{k+p}(s^k)=s^{2k+p}$.
Since $s=s^{(m-1)-j+1+j}=s^{p+1}s^j$, it follows for all $q\ge2$ that
\[
s^{qk+(q-1)p}=s^{qk+(q-1)p-1}(s^{p+1}s^j)=s^{q(k+p)}(s^j)\le s^{q(k+p)}(s^k)=s^{(q+1)k+qp}.
\]
Thus $s^k\le s^{qk+(q-1)p}$ for all $q\ge1$.
Let $r=k+p-1$. For $q=m-1$,
\[
qk+(q-1)p=(m-1)k+[(m-1)-1][(m-1)-j]=r(m-1)+j.
\]
Hence $s^k\le s^{r(m-1)+j}=s^j$. The proof of (ii) is similar.

(iii)~Let $1\le j\le m-1$ and $1\le k\le n-1$.
Left- and right-multiplying $s^j\le t^k$ by $s^{\ell}$ and $t$, respectively, yields
$s^{\ell}s^jt\le s^{\ell}t^{k+1}$, where $\ell=\operatorname{lcm}(m-1,n-1)$.
By (i) and Lemma~\ref{lem:key}, this is equivalent to
\[
s^jt=s^{\ell+j}t=s^{\ell}t^{k+1}=s^{\ell+k}t=s^kt.
\]
Right-multiplying by $t^{\ell-1}$ gives $s^j=s^{\ell+j}\le s^jt^{\ell}=s^kt^{\ell}\le t^{\ell+k}=t^k$.

(iv)~If $t^k\le s^j$, then $s^k\le t^k\le s^j$, hence $s^k=t^k=s^j$ by (i). The reverse implication is trivial.
\end{proof}

\begin{dfn}\label{dfn:parity}
We say that a word \mth{$w\in\{s,t\}^*$} is \define{even} if its length is even. We call the component of even
operators in the Hasse diagram of \mth{$\C33$} the \define{even component;} its \define{odd component} and
\define{odd words} are defined similarly\nit.
Two subcomponents of this diagram are said to be \define{congruent} when one
is a horizontal translation of the other. We say that an equation \mth{$w_1=w_2$} between words
\mth{$w_1,w_2\in\{s,t\}^+$} is \define{even} if \mth{$|w_1|-|w_2|$} is even\nit, and call an equivalence class in
Figure~\nit{\ref{fig:chittenden}} \define{even} if and only if all of its equations are even\nit;
\define{odd} equations and classes are defined similarly\nit.
\end{dfn}

\begin{lem}\label{lem:odd}
Every even \nit(\!\!\;resp\nit.\,odd\,\nit)\!\!\; equivalence class in Figure~\nit{\ref{fig:chittenden}}
containing exactly two equations collapses two congruent subcomponents of the Hasse diagram of
\mth{$\C33$} into two points \nit(\!\!\;resp\nit.\,a single point\nit{).}
\end{lem}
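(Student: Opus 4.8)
The plan is to organize everything around the observation that length $\bmod\,2$ is an invariant of $\equiv_{3,3}$: every generating relation of $\C33$ --- the powers $s^3=s$, $t^3=t$ of Lemma~\ref{lem:key}(i), the swaps $s^kt=st^k$ and $t^ks=ts^k$ of Lemma~\ref{lem:key}(iii), and $s^{2k+1}t=st$, $t^{2k+1}s=ts$ of Lemma~\ref{lem:key}(iv) --- alters word length by an even number. Hence length parity descends to $\C33$, and (using Proposition~\ref{prop:edges} to read off the order) the Hasse diagram of $\C33$ is the disjoint union of its even component $E=\{s^2,st,ts,t^2,s^2ts,t^2st\}$ and its odd component $O=\{s,t,sts,tst,s^2t,t^2s\}$, two congruent six-point posets exchanged by the parity translation. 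The point of Definition~\ref{dfn:parity} is then that an equation $w_1=w_2$ is even exactly when $w_1,w_2$ lie in the same component and odd exactly when they lie in opposite components; so a two-equation class presents two congruent subcomponents together with a rule for identifying their extreme nodes.

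First I would record, using the multiplication algorithm of Proposition~\ref{prop:multiplication}, that the two equations of such a class always name the endpoints of a pair of intervals $I\subseteq O$ and $I'\subseteq E$ congruent under the parity translation (so that $\min I,\max I$ correspond to $\min I',\max I'$). The second ingredient is antisymmetry: by Proposition~\ref{prop:global_orderings} every global ordering of $\C33$ is a global collapse, so the order induced on any quotient is a genuine partial order, and identifying two comparable elements collapses the entire interval between them to a single point.

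With this the dichotomy is forced by how the two equations pair the endpoints. In an even class both equations stay inside one component, so they read $\min I=\max I$ and $\min I'=\max I'$; antisymmetry collapses each interval to a point, and since no induced identification crosses parity these two points stay distinct, giving two points. In an odd class both equations cross components, so they read $\min I=\max I'$ and $\max I=\min I'$; then in the quotient $[\min I]=[\max I']\ge[\min I']=[\max I]\ge[\min I]$, the two inequalities coming from $I'$ and from $I$ respectively, whence antisymmetry forces $[\min I]=[\max I]$ and the two congruent subcomponents fuse to a single point. Lemma~\ref{lem:symmetry} lets me treat only one equation of each $\sim$-dual pair and transport the conclusion, halving the casework.

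The main obstacle is the bookkeeping that makes ``two congruent subcomponents'' literal: for each two-equation entry of Figure~\ref{fig:chittenden} I must verify, via Proposition~\ref{prop:multiplication}, that the words occurring really are the extremes of two intervals congruent under the parity translation, and that the two equations pair them in the straight (even) or crossed (odd) pattern above rather than in some degenerate way. This is a finite check, but it is where the argument could go wrong, since a misread endpoint --- or a class whose two equations fail to line up with a single translated interval pair --- would break the clean collapse; the explicit normal forms of Theorem~\ref{thm:st} together with duality are what keep this manageable.
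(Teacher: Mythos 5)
Your proposal is correct and follows essentially the same route as the paper: the even case is handled by noting that each within-component equation collapses its interval by pointwise antisymmetry, and the odd case uses the crossed identifications of the endpoints ($u_1=v_2$, $u_2=v_1$ with $u_1\le u_2$, $v_1\le v_2$) to force all four endpoints--and hence both congruent subcomponents--into a single class. The extra scaffolding you supply (length parity as a $\equiv_{3,3}$-invariant and the explicit two-component decomposition of the Hasse diagram of $\C33$) is left implicit in the paper's Definition~\ref{dfn:parity} and Figure~\ref{fig:st}, but the core argument is identical.
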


\begin{proof}
The claim is immediate for even classes.
For odd classes with exactly two equations, we have
$u_1=v_2\,\Longleftrightarrow\,u_2=v_1$,
where $u_1,u_2$ and $v_1,v_2$ are endpoints of congruent
subcomponents satisfying $u_1\le u_2$ and $v_1\le v_2$ in general.
Hence $v_2=u_1\le u_2=v_1$ and $u_2=v_1\le v_2=u_1$, which yields the result.
\end{proof}

\begin{lem}\label{lem:hasse}
All implications hold in Figure~\nit{\ref{fig:chittenden}.}
\end{lem}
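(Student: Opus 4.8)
The plan is to verify each implication edge of Figure~\ref{fig:chittenden} individually, exactly as the edges of Figures~\ref{fig:ip-monoid} and~\ref{fig:st} were checked. For an edge whose lower endpoint is a collapse $A$ and whose upper endpoint is a collapse $B$, I would derive every defining equation of $B$ from those of $A$ by left- and right-multiplying with the generators $s$ and $t$ and then reducing. Because $m=n=3$ forces $d=\ell=2$, the relations at hand are $s^3=s$, $t^3=t$, $s^{2k+1}t=st$, and $t^{2k+1}s=ts$ from Lemma~\ref{lem:key}, together with Lemma~\ref{lem:genkey} and the multiplication rule of Proposition~\ref{prop:multiplication}. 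Borrowing the notation $n\!:\!xRy$ from the proof of Proposition~\ref{prop:ip-monoid} (``edge $n$ follows from $xRy$''), I expect each edge to reduce to one or two such single-line derivations.

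First I would exploit the parity decomposition of Definition~\ref{dfn:parity}. By Lemma~\ref{lem:completeness} with $d=2$, no inequality of $\C33$ relates words of opposite parity, so the Hasse diagram of $\C33$ is the disjoint union of its even and odd components, each splitting into congruent subcomponent pairs. For every node of Figure~\ref{fig:chittenden} defined by exactly two equations I would invoke Lemma~\ref{lem:odd} to read off its effect---two points for an even class, a single point for an odd class---so that node is pinned down without further computation. A node generated by more than two equations I would then obtain as the join, in the lattice of collapses, of the two-equation nodes whose equations it contains; the edges into it become implications ``$A_1,\dots,A_k\Rightarrow B$'' that follow by transitivity once each single-equation consequence has been verified. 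This reduces the lemma to a finite, routine list of two-equation edge checks, handled in parallel for the even and odd families.

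The main difficulty I anticipate is organizational rather than conceptual: Figure~\ref{fig:chittenden} carries many nodes (the $46$ new global collapses alongside the topological ones), so the genuine work is to arrange the checks so that no implication is missed and none is duplicated. Two points demand care. First, a single equation can force identifications in \emph{both} components at once, since right-multiplication by $s$ or $t$ reverses parity; to control this I would always reduce a derived word to its representative in $W(3,3)$ via Proposition~\ref{prop:multiplication} before deciding which component it lands in. Second, because Lemma~\ref{lem:odd} collapses a congruent pair to a single point for an odd class but to two points for an even class, the edges leaving an odd two-equation node differ systematically from those leaving its even counterpart; treating the two families side by side keeps this asymmetry explicit and confirms that every edge drawn in the figure corresponds to a valid derivation.
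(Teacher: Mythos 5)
Your toolkit overlaps substantially with the paper's (one-step multiplications followed by reduction, plus Lemma~\ref{lem:odd} and the Hasse diagram of $\C33$ for arrows out of two-equation classes), but the proposal rests on a misreading of Figure~\ref{fig:chittenden} that leaves a real gap. The nodes of that figure are not the global collapses of Figure~\ref{fig:global_collapses}; they are the $27$ equivalence classes of the $\binom{12}{2}$ individual equations, so the lemma asserts two kinds of implications: that the equations listed inside a single node are mutually equivalent, and that any \emph{one} equation of a source node implies every equation of the target node of each arrow. You only address the arrows, and only in the weaker form ``the conjunction of $A$'s equations implies $B$'s equations''; without the within-class equivalences (which the paper dispatches first, by explicit one-step multiplications such as $t=tts\Rightarrow tt=ttts=ts$), that does not yield the single-equation implications the figure claims.

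The concrete failure point is your treatment of the four-equation classes. You propose to obtain such a node as the join of ``the two-equation nodes whose equations it contains,'' but no equation of a four-equation class (e.g.\ $sst=st$, $tts=ts$, $sts=ssts$, $tst=ttst$ in class~$1$) occurs in any two-equation class, so there is nothing to take a join of; and even if there were, a join would only control the conjunction of the four equations rather than each one separately. The paper instead extends the subcomponent-collapsing argument of Lemma~\ref{lem:odd} to these classes directly (the least four-equation class collapses both diamond-shaped subcomponents of the Hasse diagram of $\C33$ to a single point) and finishes the two arrows out of the one-equation classes $s=ss$ and $t=tt$ by explicit right-multiplication. To repair your proof you would need to verify, for each four-equation class, that its four equations are pairwise equivalent and that each single one of them implies the target classes of the outgoing arrows.
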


\begin{proof}
Implications within equivalence classes hold after a single right- or left-multiplication by $s$ or $t$,
followed, if necessary, by one or two applications of $s^3=s$, $t^3=t$, $(st)^2=st$, $(ts)^2=ts$, $tts=tss$,
$sst=stt$. For example, left-multiplying $t=tts$ by $t$ gives $tt=ttts=ts$, and left-multiplying $ssts=ttst$ by $s$
gives $sts=sssts=sttst=sstst=sst$.

Every arrow in Figure~\ref{fig:chittenden} that emanates from a class containing exactly two equations follows from
Lemma~\ref{lem:odd} together with the Hasse diagram of $\C33$.
All arrows involving four-equation classes hold by similar reasoning; in particular, the least such class
collapses the two diamond-shaped subcomponents into a single point.
Finally, to get the remaining two arrows, right-multiply $s=ss$ by $t$ and $t=tt$ by $s$.
\end{proof}

We are now ready to prove the analogue of Theorem~\ref{thm:monoids} for $\C33$.

\begin{thm}\label{thm:collapses}
Order-preserving endofunctions \mth{$s\le t$} satisfying \mth{$s^3=s$} and \mth{$t^3=t$} on a
poset \mth{$(L,\le)$}\!\!\; always generate one of \mth{$52$} distinct monoids under composition
\!\!\;\nit(\!\!\;Figure~\nit{\ref{fig:global_collapses}\hspace{.5pt}),}
\!\!\;all of which occur\nit.
\end{thm}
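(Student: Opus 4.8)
The plan is to follow the template of Theorem~\ref{thm:monoids}. By Proposition~\ref{prop:global_orderings} every global ordering of $\C33$ is already a global collapse, so the monoids generated by order-preserving $s\le t$ with $s^3=s$ and $t^3=t$ are in bijection with the global collapses of $\C33$; it therefore suffices to prove that Figure~\ref{fig:global_collapses} displays exactly these, and that there are $52$. Here $\C33$ is the $12$-element monoid on $s,s^2,t,t^2,st,ts,s^2t,t^2s,sts,tst,s^2ts,t^2st$, whose Hasse diagram splits, by the parity decomposition of Definition~\ref{dfn:parity}, into two congruent hexagonal components (the even- and odd-length words). Throughout I would use the $s\leftrightarrow t$ duality of Corollary~\ref{cor:symmetry}, which acts on the lattice of collapses, so that only one representative of each dual pair has to be treated explicitly.

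First I would fix the finest collapse. Lemma~\ref{lem:counterexamples} supplies posets forcing $t^k\not\le tsw$, $stw\not\le s^k$, $tw\not\le sw'$, and $wt\not\le w's$, which together show that $\C33$ itself (the trivial identification) is realizable and pin down which distinctions can never be merged. Next I would read off the implication lattice: Lemma~\ref{lem:hasse} verifies that every arrow in Figure~\ref{fig:chittenden} holds, so each node is closed under forcing and the arrows give the full refinement order on collapses, while Lemma~\ref{lem:odd} describes how each two-equation class acts on the two congruent subcomponents---an even class fusing them to two points, an odd class to one. This is the mechanism generating most of the lattice, and together with the duality it reduces the problem to checking a short list of generating (join-irreducible) collapses.

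For the lower bound---that all $52$ nodes occur---I would exhibit a witnessing poset for each. By the refinement order it is enough to realize the join-irreducible collapses and then obtain the remaining nodes as simultaneous identifications on disjoint unions of these witnesses; the elementary building blocks are the small posets of Lemmas~\ref{lem:completeness} and~\ref{lem:counterexamples} and the circular-shift constructions of Definition~\ref{dfn:shift}. The upper bound---that no further global collapse exists---is the crux. Since $\C33$ has only twelve elements, its congruence lattice is finite and explicitly enumerable; the task is to confirm that every congruence compatible with the permanent distinctions of Lemma~\ref{lem:counterexamples} and closed under the forcing of Lemma~\ref{lem:hasse} is one of the $52$ listed nodes, and that any additional merge would force, through the multiplication rule of Proposition~\ref{prop:multiplication} and the relations $s^3=s$, $t^3=t$, $(st)^2=st$, $(ts)^2=ts$, an identification already recorded in Figure~\ref{fig:chittenden}.

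I expect this last step to be the main obstacle: one must show not merely that the $52$ candidates are pairwise distinct and realizable, but that the forcing relations are exhaustive, so that no realizable collapse slips between listed nodes. This calls for a finite but delicate case analysis over the congruences of $\C33$, backed---as with the six residual inequalities in Proposition~\ref{prop:ip-monoid}---by a computer check that each claimed strict relation is witnessed by a small poset. The final count then separates as $52=6+46$, the six monoids already occurring in the topological closure--complement problem together with the $46$ new ones contributed by the poset setting.
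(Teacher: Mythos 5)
Your plan correctly identifies the framing---reduce to counting global collapses of $\C33$ via Proposition~\ref{prop:global_orderings}, exploit the $s\leftrightarrow t$ duality of Corollary~\ref{cor:symmetry}, and finish with a realizability check---but it stops short of the actual content of the theorem. The entire difficulty is the upper bound, i.e.\ deriving the number $52$ and showing Figure~\ref{fig:global_collapses} is exhaustive, and your proposal defers exactly this to ``a finite but delicate case analysis over the congruences of $\C33$'' without performing it. The paper's proof \emph{is} that case analysis: the $\binom{12}{2}=66$ candidate equations fall into the $27$ equivalence classes of Figure~\ref{fig:chittenden} under mutual general implication; these plus the empty class give $28$ collapses; Table~$1$ records, for every pair of classes, whether their conjunction reduces to a single class or is genuinely new (exactly $21$ blank white cells, hence $21$ new collapses from pairs); and a short argument using the gray/black/teal structure of Table~$1$ shows that all but three conjunctions of three or more classes (those conjoining $2$ and $2$d with one of $9$, $11$, $15$) reduce to conjunctions of at most two, giving $28+21+3=52$. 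None of this bookkeeping appears in your proposal, so the figure $52$ is never actually derived.

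Two further points. First, you conflate the implication diagram of Figure~\ref{fig:chittenden} (arrows between equivalence classes of \emph{equations}) with the refinement order on \emph{collapses} (Figure~\ref{fig:global_collapses}); the former is an input to computing the latter, not the same object, and Lemma~\ref{lem:counterexamples} establishes the Hasse diagram of the generic $\C mn$ rather than ``pinning down which distinctions can never be merged'' in particular posets. Second, enumerating all monoid congruences of the abstract $12$-element semigroup would not suffice (and would overcount): a global collapse must be closed under the general implications of Figure~\ref{fig:chittenden} \emph{and} be realized by some poset, which is why the paper first bounds the candidates combinatorially and then, like your proposal, falls back on a computer search (posets on at most eight points) for the lower bound. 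Your suggestion of realizing meets of collapses by disjoint unions of witnesses is a reasonable alternative to brute force for that last step, but as written the direction of the construction (disjoint unions realize intersections of collapses, so one needs witnesses for the intersection-irreducible ones) would need to be straightened out.
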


\begin{proof}
The $27$ equivalence classes in Figure~\ref{fig:chittenden} together with the empty class represent $28$
possible global collapses.
Since Table~$1$ contains exactly $21$ blank white cells, at most $21$ new ones can arise
by conjoining pairs of classes in Figure~\ref{fig:chittenden}.
Because each gray cell or black entry appears immediately below (resp.\ above) every teal entry
without (resp.\ with) a ``d'',
it follows that any conjunction of three distinct equivalence classes
not in $\{2,2$d$\}$ is equivalent to a conjunction of two or fewer classes.
By duality, Table~$1$ shows that the only remaining three-class conjunctions to consider
are the $28$ that conjoin class~$2$ with a pair in
$E=\{2\mbox{d},6\mbox{d},9,11,12\mbox{d},13\mbox{d},15,16\mbox{d}\}$.
Since every pair in $\{6\mbox{d}\}\times(E\setminus\{2\mbox{d}\})$ either has a black entry or a gray cell in
Table~$1$, and no teal entries appear below row $8$d, all but the seven conjunctions
of $2$ and $2$d with some class in $E\setminus\{2\mbox{d}\}$ can be ruled out.
Because $2\mbox{d}\ge e$ for
$e\in\{6\mbox{d},12\mbox{d},13\mbox{d},16\mbox{d}\}$, the three remaining cases $e\in\{9,11,15\}$
in Figure~\ref{fig:global_collapses} are the only ones possible.
Since the conjunction of any pair in $\{9,11,15\}$ is equivalent to a single class, we conclude that
$\C33$ admits at most $52$ global collapses.
A computer verification confirms that each of these $52$ possibilities occurs for some poset
on at most eight points.
\end{proof}

\begin{cor}\label{cor:collapses}
\mth{$\C23$} has exactly \mth{$24$} global collapses\nit.
\end{cor}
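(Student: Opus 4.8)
The plan is to reduce the statement to Theorem~\ref{thm:collapses} via the quotient map $\C33\twoheadrightarrow\C23$. By Corollary~\ref{cor:st}, every poset $(L,\le,s,t)$ with monotone $s\le t$, $s^2=s$, and $t^3=t$ is in particular a model of the defining relations of $\C33$, since $s^2=s$ forces $s^3=s$; conversely, any $\C33$-model in which $s=ss$ holds is exactly a $\C23$-model. Consequently a global collapse of $\C23$ carries the same information as a global collapse of $\C33$ containing the pair $\{s,ss\}$: realizing such a collapse by a $\C23$-poset is the same act as realizing the corresponding $\C33$-collapse by that poset viewed as a $\C33$-poset, so the standard correspondence between congruences of a monoid and congruences of its quotient makes this a bijection that also preserves realizability.

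First I would make this precise: the global collapses of $\C23$ are in order-isomorphic bijection with the principal filter $\{\,C : C\supseteq\ker(\C33\to\C23)\,\}$ inside the lattice of the $52$ global collapses of $\C33$. Here $\ker(\C33\to\C23)$ is itself one of the $52$, namely the smallest global collapse containing $s=ss$. I would note that this kernel is larger than the naive identifications it syntactically generates: because imposing $s=ss$ drops $d=\gcd(m-1,n-1)$ from $2$ to $1$, Lemma~\ref{lem:key}(iv) also forces $t^2s=ts$. Thus the kernel identifies the five pairs $\{s,s^2\}$, $\{st,s^2t\}$, $\{sts,s^2ts\}$, $\{ts,t^2s\}$, $\{tst,t^2st\}$, collapsing the twelve elements of $\C33$ onto the seven reduced words of $W(2,3)$ in Proposition~\ref{prop:st}, exactly the elements of $\C23$.

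It then remains to count this filter directly from Figure~\ref{fig:global_collapses}: a node lies in it precisely when its diagram satisfies $s=ss$, and inspection shows that $24$ of the $52$ do. To organize the bookkeeping I would exploit the self-duality $\C23\cong\C32$ supplied by Corollary~\ref{cor:symmetry}, which pairs each collapse with its order-dual and lets me tally self-dual collapses separately from genuine dual pairs; this parallels the use of Figure~\ref{fig:chittenden} and Table~1 in the proof of Theorem~\ref{thm:collapses}, now restricted to those equivalence classes compatible with $s=ss$.

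The main obstacle is the combinatorial count itself: confirming that exactly $24$ of the $52$ nodes satisfy $s=ss$, and that the passage to the quotient neither merges two distinct $\C33$-collapses into one $\C23$-collapse nor splits one into two. Both are resolved by the correspondence bijection together with the realizability already certified in Theorem~\ref{thm:collapses}: since $\{s,ss\}\in C$ means $s=s^2$ in any poset witnessing $C$, every collapse in the filter is realized by one of the posets on at most eight points found there, so the count is read straight off the diagram with no new computer search required.
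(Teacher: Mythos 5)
Your proposal is correct and follows the same essential route as the paper: both reduce the count to Theorem~\ref{thm:collapses} and identify the global collapses of $\C23$ with the $24$ nodes of Figure~\ref{fig:global_collapses} that satisfy $s=ss$ (the paper phrases this as the $16$ bold $\C22$ nodes satisfying $s=ss$ and $t=tt$ plus the eight satisfying $s=ss$ and $t\neq tt$). The one substantive difference is the realizability step. The paper runs a fresh computer verification showing each of the $24$ occurs on a poset with at most seven points; you instead observe that since $s=ss$ forces $s^2=s$, any eight-point witness from Theorem~\ref{thm:collapses} for a $\C33$-collapse containing $\{s,ss\}$ is already a $\C23$-model realizing the corresponding collapse, so no new search is logically required. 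That inheritance argument is sound (the induced map on collapses containing the kernel is injective because realized collapses are equivalence relations), and your explicit description of the kernel---the five pairs $\{s,s^2\}$, $\{st,s^2t\}$, $\{sts,s^2ts\}$, $\{ts,t^2s\}$, $\{tst,t^2st\}$, with $t^2s=ts$ coming from Lemma~\ref{lem:key}(iv) as $d$ drops from $2$ to $1$---makes precise a point the paper leaves implicit, namely why the $12$ elements of $\C33$ collapse onto exactly the $7$ of $\C23$. What you lose relative to the paper is only the sharper bound that seven points suffice; what you gain is the elimination of a redundant computation. The appeal to $\C23\cong\C32$ via Corollary~\ref{cor:symmetry} is harmless bookkeeping but not needed.
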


\begin{proof}
By Theorem~\ref{thm:collapses}, the only possible global collapses of $\C22$ are the $16$ in
Figure~\ref{fig:global_collapses} that satisfy $s=ss$ and $t=tt$.\footnote{Every global collapse of $\K$ in
Figure~\ref{fig:monoids} except~$10$ and~$10$d corresponds to one of these $16$ global collapses ($10$
and $10$d each collapse to the discrete case when $\ident$ is removed).}
The only possible global collapses of $\C23$ are these $16$
together with the eight in Figure~\ref{fig:global_collapses} that satisfy $s=ss$ and $t\neq tt$.
A computer verification confirms that each of these $24$ possibilities occurs for some poset
on at most seven points.
\end{proof}

\begin{figure}
\centering\scriptsize
\renewcommand{\arraystretch}{1}
\renewcommand{\tabcolsep}{1pt}
};
\node[anchor=300] (a) at (-2.4,.5) {};
\node[anchor=120] (b) at (-2.4,.5) {};
\node[anchor=240] (c) at (2.4,.5) {};
\node[anchor=60] (d) at (2.4,.5) {};
\node[anchor=240] (e) at (3.2,5) {};
\node[anchor=60] (f) at (3.2,5) {};
\node[anchor=300] (g) at (-2.2,4.4) {};
\node[anchor=120] (h) at (-2.2,4.4) {};
\node[anchor=340] (i) at (2.2,7.5) {};
\node[anchor=160] (j) at (2.2,7.5) {};
\node[anchor=270] (k) at (-2.7,8.5) {};
\node[anchor=90] (l) at (-2.7,8.5) {};
\node[anchor=200] (m) at (-1.8,7.36) {};
\node[anchor=20] (n) at (-1.8,7.36) {};
\node[anchor=340] (o) at (4.9,4.8) {};
\node[anchor=160] (p) at (4.9,4.8) {};
\node[anchor=0] (u) at (1,15.1) {};
\node[anchor=180] (v) at (1,15.1) {};
\node[anchor=180] (w) at (-1,14.9) {};
\node[anchor=0] (x) at (-1,14.9) {};
\node[anchor=260] (aa) at (-3.3,7) {};
\node[anchor=80] (bb) at (-3.3,7) {};
\node[anchor=290] (cc) at (3.5,7) {};
\node[anchor=110] (dd) at (3.5,7) {};
\node[anchor=180] (gg) at (-1,13.2) {};
\node[anchor=0] (hh) at (-1,13.2) {};
\node[anchor=0] (ii) at (1,13.4) {};
\node[anchor=180] (jj) at (1,13.4) {};
\node[anchor=210] (mm) at (-4.4,11.3) {};
\node[anchor=30] (nn) at (-4.4,11.3) {};
\node[anchor=330] (oo) at (4.4,11.3) {};
\node[anchor=150] (pp) at (4.4,11.3) {};
\node[anchor=190] (qq) at (-4.8,10.04) {};
\node[anchor=10] (rr) at (-4.8,10.04) {};
\node[anchor=350] (ss) at (4.8,10.04) {};
\node[anchor=170] (tt) at (4.8,10.04) {};
\node[anchor=270] (uu) at (-3.6,14.5) {};
\node[anchor=90] (vv) at (-3.6,14.5) {};

\draw (0) -- (1);
\draw[extend upper=1.2pt,line width=1pt] (0.west) -- (2.south east);
\draw[extended=.3pt,line width=1pt] (0.west) -- (3and4.197);
\draw[extended=.3pt,line width=1pt] (0.east) -- (3and4.343);
\draw[extend upper=1.2pt,line width=1pt] (0.east) -- (5.south west);

\path (1.west) edge[out=180,in=0,extended=.3pt] (6.344);
\path (1.west) edge[out=180,in=300,extended=.3pt] (a);
\path (b) edge[out=120,in=210,extended=.3pt] (7and8.south west);
\path (1.east) edge[out=0,in=240,extended=.3pt] (c);
\path (d) edge[out=60,in=330,extended=.3pt] (7and8.south east);
\path (1.east) edge[out=0,in=180,extended=.3pt] (9.196);

\path (2.east) edge[out=60,in=300,extended=.3pt] (14.east);
\path (2.east) edge[out=40,in=320,extended=.3pt,line width=1pt] (15.east);
\path (2.east) edge[out=50,in=310,extended=.3pt,line width=1pt] (17.east);
\draw[extended=.6pt,line width=1pt] (2.north east) -- (18.south west);

\draw (3and4.north) -- (3and4.south);
\draw[extended=.3pt] (3and4.158) -- (7and8.202);
\draw[extended=.3pt] (3and4.22) -- (7and8.338);
\draw[extended=.6pt,line width=1pt] (3and4.north east) -- (10.south west);
\draw[extended=1.2pt,line width=1pt] (3and4.22) -- (15.south east);
\draw[extended=.9pt,line width=1pt] (3and4.north west) -- (17.south east);
\draw[extend lower=1.2pt,line width=1pt] (3and4.158) -- (19.west);
\path (3and4.north west) edge[out=110,in=230,extended=.9pt,line width=1pt] (20.south west);
\path (3and4.north east) edge[out=70,in=310,extended=.9pt,line width=1pt] (20.south east);

\path (5.west) edge[out=140,in=220,extended=.3pt,line width=1pt] (10.west);
\path (5.west) edge[out=120,in=240,extended=.3pt] (12.west);
\draw[extended=.6pt,line width=1pt] (5.north west) -- (18.south east);
\path (5.west) edge[out=130,in=230,extended=.3pt,line width=1pt] (19.west);

\path (6.east) edge[out=40,in=320,extended=.3pt] (16.east);
\path (6.east) edge[out=30,in=330,extended=.3pt] (21.east);
\path (6.east) edge[out=0,in=260,extended=.3pt] (aa);
\path (bb) edge[out=80,in=180,extended=.3pt] (22.west);

\draw (7and8.north) -- (7and8.south);
\path (7and8.north east) edge[out=60,in=180,extended=.3pt] (11.west);
\path (7and8.22) edge[out=160,in=300,extend lower=.4pt] (g);
\path (h) edge[out=120,in=350,extended=.3pt] (16.east);
\draw[extended=.4pt] (7and8.north west) -- (21.south east);
\path (7and8.158) edge[out=20,in=240,extend lower=.4pt] (e);
\path (f) edge[out=60,in=245,extended=.3pt] (23.south west);
\path (7and8.north west) edge[out=100,in=220,extend lower=.8pt,extend upper=.3pt] (24.south west);
\path (7and8.north east) edge[out=80,in=318,extend lower=.8pt,extend upper=.3pt] (24.south east);

\path (9.west) edge[out=150,in=210,extended=.3pt] (11.west);
\path (9.west) edge[out=180,in=290,extended=.3pt] (cc);
\path (dd) edge[out=110,in=0,extended=.3pt] (22.east);
\path (9.west) edge[out=160,in=200,extended=.3pt] (23.west);

\path (10.west) edge[out=120,in=240,extended=.3pt] (13.west);
\path (10.north west) edge[out=120,in=340,extend lower=.6pt,line width=1pt] (i);
\path (j) edge[out=160,in=340,extend upper=1pt,line width=1pt] (25and41.202);
\path (10.west) edge[out=130,in=230,extended=.3pt,line width=1pt] (27.west);

\path (11.north west) edge[out=120,in=0,extended=.3pt] (u);
\path (v) edge[out=180,in=340,extended=.4pt] (26and42.202);
\path (11.west) edge[out=150,in=210,extended=.3pt] (28.west);

\path (12.west) edge[out=140,in=220,extended=.3pt] (9.west);
\path (12.west) edge[out=160,in=200,extended=.3pt] (13.west);
\path (12.west) edge[out=150,in=210,extended=.3pt] (29.west);
\path (12.west) edge[out=170,in=190,extended=.3pt] (30.west);

\path (13.west) edge[out=150,in=210,extended=.3pt] (11.west);
\path (13.west) edge[out=120,in=240,extended=.3pt] (31.west);
\path (13.west) edge[out=180,in=180,extended=.3pt] (32.west);

\path (14.east) edge[out=40,in=320,extended=.3pt] (6.east);
\path (14.east) edge[out=20,in=340,extended=.3pt] (33.east);
\path (14.east) edge[out=10,in=350,extended=.3pt] (34.east);
\path (14.east) edge[out=30,in=330,extended=.3pt] (35.east);

\draw[extended=.6pt,line width=1pt] (15.north east) -- (25and41.193);
\path (15.east) edge[out=60,in=300,extended=.3pt] (33.east);
\path (15.east) edge[out=50,in=310,extended=.3pt,line width=1pt] (36.east);

\draw[extended=.3pt] (16.north east) -- (26and42.west);
\path (16.east) edge[out=30,in=330,extended=.3pt] (37.east);

\path (17.east) edge[out=40,in=320,extended=.3pt] (34.east);
\path (17.east) edge[out=30,in=330,extended=.3pt,line width=1pt] (36.east);
\path (17.north east) edge[out=60,in=200,extended=.6pt,line width=1pt] (m);
\path (n) edge[out=20,in=210,extended=.6pt,line width=1pt] (25and41.338);

\draw[extended=.3pt,line width=1pt] (18.160) -- (25and41.195);
\draw[extended=.3pt] (18.east) -- (29.west);
\draw[extended=.3pt] (18.west) -- (35.east);
\draw[extended=.3pt,line width=1pt] (18.20) -- (25and41.345);

\path (19.west) edge[out=150,in=210,extended=.3pt,line width=1pt] (27.west);
\path (19.west) edge[out=140,in=220,extended=.3pt] (30.west);
\draw[extended=.6pt,line width=1pt] (19.north west) -- (25and41.347);

\path (20.10) edge[out=10,in=330,extended=.3pt] (24.336);
\draw[extended=.6pt,line width=1pt] (20.north east) -- (27.west);
\draw[extended=.6pt,line width=1pt] (20.north west) -- (36.east);

\path (21.east) edge[out=40,in=320,extended=.3pt] (37.east);
\path (21.north east) edge[out=60,in=180,extended=.3pt] (w);
\path (x) edge[out=0,in=220,extended=.3pt] (26and42.338);

\path (22.north east) edge[out=60,in=330,extended=.3pt] (26and42.south east);
\path (22.north west) edge[out=120,in=210,extended=.3pt] (26and42.south west);

\path (23.west) edge[out=140,in=220,extended=.3pt] (28.west);
\draw[extended=.3pt] (23.north west) -- (26and42.east);

\path (24.14) edge[out=0,in=240,extended=.3pt] (28.south west);
\path (24.166) edge[out=180,in=300,extended=.3pt] (37.south east);

\draw (25and41.north) -- (25and41.south);
\path (25and41.140) edge[out=24,in=212,extended=.3pt] (31.west);
\draw[extend lower=.3pt] (25and41.north west) -- (38.east);
\draw[extend upper=.3pt,line width=1pt] (25and41.165) -- (43.west);
\draw[extend upper=.3pt,line width=1pt] (25and41.15) -- (43.east);
\draw (25and41.north east) -- (45.west);
\path (25and41.40) edge[out=162,in=328,extended=.3pt] (48.south east);

\draw (26and42.north) -- (26and42.south);
\draw[extend upper=.5pt] (26and42.164) -- (44.south west);
\draw[extend upper=.5pt] (26and42.16) -- (44.south east);

\path (27.west) edge[out=120,in=240,extended=.3pt] (32.west);
\path (27.west) edge[out=120,in=0,extended=.3pt,line width=1pt] (43.east);

\draw[extend lower=.6pt] (28.north west) -- (44.east);

\path (29.west) edge[out=130,in=230,extended=.3pt] (31.west);
\draw (29.west) -- (39.east);
\path (29.west) edge[out=140,in=220,extended=.3pt] (45.west);

\path (30.west) edge[out=130,in=230,extended=.3pt] (23.west);
\path (30.west) edge[out=140,in=220,extended=.3pt] (32.west);
\path (30.west) edge[out=120,in=240,extended=.3pt] (45.west);

\path (31.north west) edge[out=120,in=0,extended=.3pt] (ii);
\path (jj) edge[out=180,in=340,extended=.3pt] (40and50.200);
\path (31.west) edge[out=140,in=220,extended=.3pt] (46.west);

\path (32.west) edge[out=130,in=230,extended=.3pt] (28.west);
\path (32.west) edge[out=120,in=240,extended=.3pt] (46.west);

\path (33.east) edge[out=10,in=350,extended=.3pt] (16.east);
\path (33.east) edge[out=50,in=310,extended=.3pt] (38.east);
\path (33.east) edge[out=6,in=354,extended=.3pt] (47.east);

\path (34.east) edge[out=50,in=310,extended=.3pt] (21.east);
\path (34.east) edge[out=40,in=320,extended=.3pt] (47.east);
\path (34.east) edge[out=60,in=300,extended=.3pt] (48.east);

\path (35.east) edge[out=70,in=290,extended=.3pt] (38.east);
\draw (35.east) -- (39.west);
\path (35.east) edge[out=50,in=310,extended=.3pt] (48.east);

\path (36.east) edge[out=60,in=180,extended=.3pt,line width=1pt] (43.west);
\path (36.east) edge[out=60,in=300,extended=.3pt] (47.east);

\draw[extend lower=.6pt] (37.north east) -- (44.west);

\draw[extend lower=.4pt,extend upper=.3pt] (38.north east) -- (40and50.south west);
\path (38.east) edge[out=40,in=320,extended=.3pt] (49.east);

\path (39.west) edge[out=180,in=270,extended=.3pt] (k);
\path (l) edge[out=90,in=180,extended=.3pt] (22.west);
\path (39.west) edge[out=180,in=270,extended=.1pt] (40and50.south west);
\path (39.east) edge[out=0,in=296,extended=.3pt] (40and50.south east);

\draw (40and50.north) -- (40and50.south);
\draw (40and50.160) -- (26and42.202);
\path (40and50.north west) edge[out=120,in=190,extended=.3pt] (51.west);
\draw (40and50.20) -- (26and42.338);
\path (40and50.north east) edge[out=60,in=350,extended=.3pt] (51.east);

\draw[extend upper=.4pt] (43.east) -- (46.south west);
\draw[extend upper=.4pt] (43.west) -- (49.south east);

\path (45.west) edge[out=130,in=230,extended=.3pt] (46.west);
\draw[extend lower=.4pt] (45.north west) -- (40and50.south east);

\draw[extend lower=.6pt] (46.north west) -- (51.east);

\path (47.east) edge[out=50,in=310,extended=.3pt] (37.east);
\path (47.east) edge[out=60,in=300,extended=.3pt] (49.east);

\path (48.east) edge[out=50,in=310,extended=.3pt] (49.east);
\path (48.north east) edge[out=60,in=180,extended=.3pt] (gg);
\path (hh) edge[out=0,in=220,extended=.3pt] (40and50.340);

\draw[extend lower=.8pt] (49.north east) -- (51.west);

\draw (51) -- (44);

\end{tikzpicture}
\caption{\vrule width0pt height10ptThe $52$ global collapses of $\C33$, ordered by containment.
Bold denotes $\C22$.}
\label{fig:global_collapses}
\end{figure}

\section*{Acknowledgments}
Francesco~Ciraulo kindly provided valuable help with Sections~\ref{sec:intro}-\ref{sec:pseudo},
sharing three counterexamples I initially thought sufficed to prove Theorem~\ref{thm:localic}
(I later realized a fourth is needed).
Gro\mbox{-}Tsen's answer at \url{https://mathoverflow.net/a/501577/5090} was helpful for the
proof of Theorem~\ref{thm:localic}, and Hagen~von~Eitzen's answer at
\url{https://math.stackexchange.com/a/5069214/32209} was useful for the proof
of Lemma~\ref{lem:completeness}.
ChatGPT (OpenAI, 2025) assisted by generating \texttt{C} code and providing stylistic feedback
on the prose.

\bigskip
\advance\baselineskip by -.7pt

\interlinepenalty=10000

\bibliography{kuratowski_g}

\end{document}